\newtheorem{theorem}{Theorem}
\newtheorem{proposition}[theorem]{Proposition}
\newtheorem{lemma}[theorem]{Lemma}
\newtheorem{definition}[theorem]{Definition}
\newtheorem{corollary}[theorem]{Corollary}
\definecolor{darkgreen}{rgb}{0,0.6,0}
\providecommand{\F}{\mathbb{F}}
\providecommand{\N}{\mathbb{N}}
\DeclareMathOperator{\PG}{PG}
\newcommand{\cB}{\mathcal{B}}
\newcommand{\cH}{\mathcal{H}}
\newcommand{\cL}{\mathcal{L}}
\newcommand{\cM}{\mathcal{M}}
\newcommand{\cP}{\mathcal{P}}
\newcommand{\cS}{\mathcal{S}}
\title{Vector space partitions of $\operatorname{GF}(2)^8$}
\author{Sascha Kurz}
\affil{University of Bayreuth, 95440 Bayreuth, Germany\\sascha.kurz@uni-bayreuth.de}
\date{February 2022}
\begin{document}

\maketitle

\maketitle

\begin{abstract}
  \noindent
  A vector space partition $\cP$ of the projective space $\PG(v-1,q)$ is a set of subspaces in $\PG(v-1,q)$ which partitions the set of points. 
  We say that a vector space partition $\cP$ has type $(v-1)^{m_{v-1}} \dots 2^{m_2}1^{m_1}$ if precisely $m_i$ of its elements have dimension $i$, where 
  $1\le i\le v-1$. Here we determine all possible types of vector space partitions in $\PG(7,2)$.

  \smallskip
  
  \noindent
  \textbf{Keywords:} Finite geometry, vector space partitions, divisible codes, linear codes \\
  %% \textbf{Mathematics Subject Classification:} 51E23 (05B40)
\end{abstract}

\section{Introduction}
\label{sec_introduction}
We call the $i$-dimensional subspaces of the projective space $\PG(v-1,q)$ $i$-spaces using the geometric names points, lines, planes, solids, and hyperplanes for $1$-, $2$-, 
$3$-, $4$-, and $(v-1)$-spaces, respectively. A \emph{vector space partition} $\cP$ of $\PG(v-1,q)$ is a set of subspaces in $\PG(v-1,q)$ which partitions the set of points. For 
a survey on known results we refer to \cite{heden2012survey}. We say that a vector space partition $\cP$ has type $(v-1)^{m_{v-1}} \dots 2^{m_2}1^{m_1}$ if precisely $m_i$ of its 
elements have dimension $i$, where $1\le i\le v-1$. The classification of the possible types of a vector space partition, given the parameters $v$ and $q$, is an important and 
difficult problem. Based on \cite{heden1986partitions}, the classification for the binary case $q=2$ was completed for $v\le 7$ in \cite{el2009partitions}. Under the assumption 
$m_1=0$ the case $(q,v)=(2,8)$ has been treated in \cite{el2010partitions}. Here we complete the classification of the possible types of vector space partitions in $\PG(7,2)$. 
We will also briefly discuss the feasible types of vector space partitions in $\PG(v-1,q)$ for all field sizes $q$ and all dimensions $v\le 5$.

Setting $[k]_q:=\tfrac{q^k-1}{q-1}$ we can state that every $k$-space in $\PG(v-1,q)$ consists of $[k]_q$ points. So, counting points gives the 
\emph{packing condition}
\begin{equation}
  \sum_{i=1}^{v-1} m_i\cdot[i]_q =[v]_q. \label{eq_packing_condition}
\end{equation} 
Another well-known condition uses the fact that an $a$-space and a disjoint $b$-space span an $(a+b)$-space. So, we have 
\begin{equation}
  m_i\cdot m_j=0  \label{eq_dimension_condition}
\end{equation}
for all $1\le i<j\le v-1$ with $i+j>j$ and $m_i\le 1$ for all $i>v/2$. These two conditions are sufficient to characterize all feasible types of vector space partitions in $\PG(v-1,q)$ 
for $v\le 4$. 

Another condition stems from the fact that for an index $2\le j\le v-1$ with $m_j>0$ the set of points contained in the subspaces of dimension strictly less than $j$ corresponds to
a $q^{j-1}$-divisible linear code over $\F_q$ of length
\begin{equation}  
  n=\sum_{i=1}^{j-1} m_i\cdot[i]_q,
\end{equation}
see e.g.\ \cite{kiermaier2020lengths}, where a linear code is called $\Delta$-divisible if all of its codewords have a weight that is divisible by $\Delta$. Non-existence results 
for $q^r$-divisible projective codes are e.g.\ discussed in \cite{honold2018partial}. A recent survey can be found in \cite{kurz2021divisible}. Some of these conditions 
are already contained in \cite{heden2009length} and used under the name \emph{tail condition} in the literature on vector space partitions.

A few more necessary conditions for the existence of vector space partitions are stated in \cite{lehmann2012some}.

\medskip

The notion of a vector space partition can be generalized in several directions. A $\lambda$-fold vector space partition of $\PG(v-1,q)$ is a (multi-) set of subspaces such that 
every point $P$ is covered exactly $\lambda$ times, see e.g.\ \cite{el2011lambda}. Another variant considers sets of subspaces such that every $t$-space is covered exactly once, 
see \cite{heinlein2019generalized}. Vector space partitions of affine spaces have been considered in \cite{bamberg2022affine}.

\bigskip

The remaining part of the paper is structured as follows. In Section~\ref{sec_preliminaries} we introduce the necessary preliminaries. We deduce our main result -- the classification 
of all possible types of vector space partitions of $\PG(7,2)$ -- in Section~\ref{sec_vsp_pg_7_2}. While several of the presented non-existence results for vector space partitions 
have purely theoretical proofs, others rely on extensive computer computations. It would be nice to see some of these calculations be replaced by pen-and-paper proofs. To top 
the paper off, we discuss the possible types of vector space partitions in $\PG(v-1,q)$ for arbitrary field sizes $q$ and dimensions $v\le 5$ of the ambient space in
Section~\ref{sec_vsp_pg_small}.

\section{Preliminaries}
\label{sec_preliminaries}

A vector space partition $\cP$ of $\PG(v-1,q)$ is called \emph{reducible} if there exists a proper subset $\mathcal{Q}$ of the elements of $\cP$ whose points partition 
a proper subspace of $\PG(v-1,q)$. If $\cP$ is not reducible we also speak of an irreducible vector space partition. We can easily construct reducible vector space partitions 
by starting from an arbitrary vector space partition and replacing an element with dimension at least $2$ by its contained points. 

\bigskip

A \emph{multiset of points} in $\PG(v-1,q)$ is a mapping $\chi$ from the set of points in $\PG(v-1,q)$ to $\mathbb{N}$, so that $\chi(P)$ is the multiplicity of the point $P$. If $\chi(P)\le 
1$ for all points $P$, then we also speak of a set (of points) $\chi$ in $\PG(v-1,q)$. The \emph{support} of a multiset of points is the set of all points with non-zero multiplicity. We 
say that $\chi$ is $\Delta$-divisible if $\sum_{P\not\le H} \chi(P)\equiv 0\pmod \Delta$ for each hyperplane $H$ of $\PG(v-1,q)$ and points $P$. In other words, the Hamming weights of the codewords 
of the $\F_q$-linear code $C_\chi$ associated with $\chi$ are divisible by $\Delta$. By $\#\cM$ we denote the cardinality of $\cM$, i.e., the sum $\sum_P \cM(P)$ of the multiplicities 
of all points $P$. If $S$ is an arbitrary subspace, then by $\cM(S)$ we denote the sum $\sum_{P\le S} \cM(P)$ of the point multiplicities of the points contained in $S$. For each  
$U\in\operatorname{PG}(v-1,q)$ we denote by $\chi_U$ its characteristic function, i.e., 
$\chi_U(P)=1$ iff $P\le U$ and $\chi_U(P)=0$ otherwise. It is an easy exercise to show that $\chi_U$ is $q^{\dim(U)-1}$-divisible, which extends to multisets of subspaces:  
\begin{lemma}(\cite[Lemma 11]{kiermaier2020lengths}) Let $\mathcal{U}$ be a multiset of subspaces in $\operatorname{PG}(v-1,q)$ with dimension at least $k$. Then $\chi_{\mathcal{U}}:= 
  \sum_{U\in\mathcal{U}} \chi_U$ is $q^{k-1}$-divisible.
\end{lemma}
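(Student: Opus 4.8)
The plan is to reduce to the case of a single subspace and then conclude by additivity of the divisibility condition. Fix a hyperplane $H$ of $\PG(v-1,q)$ and a subspace $U$ of dimension $d\ge k$. If $U\le H$, then $\sum_{P\not\le H}\chi_U(P)=0$. Otherwise $U\not\le H$, and since $H$ is a hyperplane the dimension formula $\dim(U\cap H)=\dim(U)+\dim(H)-\dim(U+H)$ with $U+H$ the whole ambient space gives $\dim(U\cap H)=d-1$. Counting points,
\begin{equation*}
  \sum_{P\not\le H}\chi_U(P)\;=\;[d]_q-[d-1]_q\;=\;\frac{q^d-1}{q-1}-\frac{q^{d-1}-1}{q-1}\;=\;q^{d-1}.
\end{equation*}
In either case the value is divisible by $q^{d-1}$; since $d\ge k$ we have $q^{k-1}\mid q^{d-1}$, so $\chi_U$ is $q^{k-1}$-divisible. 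This is precisely the "easy exercise" alluded to just before the statement.

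Next I would note that the defining relation is additive in the multiset of points: if $\chi$ and $\chi'$ each satisfy $\sum_{P\not\le H}\chi(P)\equiv 0\pmod{q^{k-1}}$ for every hyperplane $H$, then so does $\chi+\chi'$, because the sum splits as $\sum_{P\not\le H}\chi(P)+\sum_{P\not\le H}\chi'(P)$. Applying this inductively over the finite multiset $\mathcal{U}$, with the single-subspace computation of the previous paragraph providing the base case for each $U\in\mathcal{U}$, shows that $\chi_{\mathcal{U}}=\sum_{U\in\mathcal{U}}\chi_U$ satisfies $\sum_{P\not\le H}\chi_{\mathcal{U}}(P)\equiv 0\pmod{q^{k-1}}$ for every hyperplane $H$, i.e.\ $\chi_{\mathcal{U}}$ is $q^{k-1}$-divisible.

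The only point that genuinely needs care is the claim that a hyperplane not containing $U$ meets $U$ in a subspace of dimension exactly $\dim(U)-1$ (so that the point count is $[d]_q-[d-1]_q=q^{d-1}$ rather than something smaller); this is immediate from maximality of $H$ and the dimension formula. Beyond that there is no real obstacle: the lemma is essentially a linearity wrapper around the one-subspace estimate, and the main task is simply to state cleanly why $q^{k-1}\mid q^{d-1}$ whenever $d\ge k$ and why the congruence condition is preserved under forming sums of multisets of points.
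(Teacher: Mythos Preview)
Your argument is correct and is precisely the standard one: the single-subspace case (the ``easy exercise'' the paper mentions immediately before the lemma) together with additivity of the congruence condition over the multiset. The paper does not give its own proof of this lemma but merely cites \cite{kiermaier2020lengths}; your write-up is exactly how that citation would be unpacked.
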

If $\chi$ is $\Delta$-divisible and $\chi(P)\le \lambda$ for some constant $\lambda\in \mathbb{N}$ and all points $P$, then the $\lambda$-complement $\overline{\chi}$, defined by 
$\overline{\chi}(P)=\lambda-\chi(P)$ for all points $P$, is also $\Delta$-divisible.
\begin{corollary}
  Let $\cP$ be a vector space partition of $\PG(v-1,q)$ of type $(v-1)^{m_{v-1}} \dots 2^{m_2}1^{m_1}$, then the set of 
  points $\cH_k$ such that the corresponding element $A\in\cP$, that contains the point, has dimension at most $k$ is $q^k$-divisible if $\cP$ contains an element with 
  dimension strictly larger than $k$.
\end{corollary}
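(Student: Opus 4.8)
The plan is to obtain $\cH_k$ as a complement and to combine the two facts assembled immediately before the statement: the divisibility lemma of Kiermaier--Kurz (the Lemma quoted above) and the observation that the $\lambda$-complement of a $\Delta$-divisible multiset bounded by $\lambda$ is again $\Delta$-divisible. First I would set $\cU := \{A\in\cP : \dim(A)\ge k+1\}$. The hypothesis that some element of $\cP$ has dimension strictly larger than $k$ just says $\cU\neq\emptyset$ (and in particular forces $k\le v-1$, so that ``$q^k$-divisible'' is a sensible statement at all). Every element of $\cU$ has dimension at least $k+1$, so applying the Lemma with $k+1$ in the role of its parameter shows that $\chi_{\cU}=\sum_{A\in\cU}\chi_A$ is $q^{(k+1)-1}=q^k$-divisible.

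Next I would use that $\cP$ partitions the point set: its elements are pairwise point-disjoint, so $\chi_{\cU}$ takes only the values $0$ and $1$ and is precisely the characteristic function of the set of points lying in an element of dimension at least $k+1$. Because every point of $\PG(v-1,q)$ is contained in exactly one element of $\cP$, such a point lies outside every element of $\cU$ exactly when the element of $\cP$ containing it has dimension at most $k$; hence the $1$-complement $\overline{\chi_{\cU}}$, given by $\overline{\chi_{\cU}}(P)=1-\chi_{\cU}(P)$, equals $\chi_{\cH_k}$.

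Finally, since $\chi_{\cU}$ is $q^k$-divisible and bounded above by $1$, the remark preceding the statement yields that $\overline{\chi_{\cU}}=\chi_{\cH_k}$ is $q^k$-divisible, which is the assertion. I do not expect a genuine obstacle here; the only points requiring a little care are bookkeeping --- checking that the disjointness of the partition really makes $\chi_{\cU}$ a $\{0,1\}$-valued function so that the complement operation from the preceding remark applies, and getting the index shift in the Lemma right, namely that dimension $\ge k+1$ produces $q^k$-divisibility rather than $q^{k-1}$-divisibility.
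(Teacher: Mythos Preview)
Your argument is correct and is exactly the derivation the paper intends: the Corollary is placed directly after the Kiermaier--Kurz lemma and the remark on $\lambda$-complements precisely so that it follows by applying the lemma to $\cU=\{A\in\cP:\dim(A)\ge k+1\}$ and then taking the $1$-complement, which the disjointness of the partition makes legitimate. There is nothing to add.
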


An exemplary implication is that no vector space partition of type $3^8 2^2 1^1$ in $\PG(5,2)$ exists since there is no $2$-divisible multiset of points of cardinality $1$, i.e., no $2$-divisible 
binary code of effective length $1$. In our setting of vector space partitions the maximum possible point multiplicity is $1$, so that the corresponding codes are projective, i.e., generator matrices 
do not contain repeated columns. The possible effective lengths of projective binary $\Delta$-divisible linear codes have been completely characterized for all 
$\Delta\in\{2,4,8\}$, see e.g.\ \cite{honold2018partial,honold2019lengths} for proofs and further references:
\begin{proposition}
  \label{prop_lenghts_div_proj}
  Let $n\in\mathbb{N}_{>0}$ be the effective length of a $\Delta$-divisible binary projective linear code.
  \begin{enumerate}
    \item[(a)] If $\Delta=2$, then $n\ge 3$.
    \item[(b)] If $\Delta=4$, then $n\in \{7,8\}$ or $n\ge 14$.
    \item[(c)] If $\Delta=8$, then $n\in\{15, 16, 30, 31, 32, 45, 46, 47, 48, 49, 50, 51\}$ or $n\ge 60$.
  \end{enumerate}
  All those effective lengths can indeed be attained.   
\end{proposition}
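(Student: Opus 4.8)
The plan is to translate everything into finite geometry and then treat the ``attainability'' and ``non-existence'' halves separately. A binary projective linear code of effective length $n$ is the same datum as a set $S$ of $n$ points of some $\PG(k-1,2)$ with $\langle S\rangle=\F_2^k$; the codeword attached to a nonzero linear functional $\lambda$ has weight $n-|S\cap\ker\lambda|=|S\setminus H|$ with $H=\ker\lambda$, so $\Delta$-divisibility says precisely that $|S\setminus H|\equiv 0\pmod\Delta$ for every hyperplane $H$. Part~(a) is then immediate: for $n=1$ the unique nonzero codeword has weight~$1$; for $n=2$, if $P\neq Q$ are the two points then $P+Q\neq 0$, so some $\lambda$ with $\lambda(P+Q)=1$ separates $P$ and $Q$ and gives a weight-$1$ codeword; hence $n\ge 3$. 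Conversely, for every $n\ge 3$ the $n$ columns $e_1,\dots,e_{n-1},\,e_1+\dots+e_{n-1}\in\F_2^{\,n-1}$ are pairwise distinct and nonzero (the last one differs from every $e_i$ since $n-1\ge 2$) and sum to~$0$, so the associated projective code has all weights even, i.e.\ is $2$-divisible.

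For the attainability parts of (b) and (c) the plan is to realize every listed length from a small toolbox closed under one length-additive operation. The basic blocks are the simplex codes (the point set $\PG(k-1,2)$ is $2^{k-1}$-divisible of length $2^k-1$), the affine / first-order Reed--Muller codes (the point set $\AG(k,2)$ is $2^{k-1}$-divisible of length $2^k$), and a finite list of sporadic short codes, some obtained by computer; the operation is juxtaposition of two codes whose point sets lie in complementary subspaces of a common ambient space, which adds effective lengths and preserves both $\Delta$-divisibility (a functional restricts to the two factors and the weight splits additively) and projectivity (the two point sets meet only in~$0$). For $\Delta=4$ this gives $7$ and $8$ directly, hence all $7a+8b$, and a few sporadic constructions fill the remaining values below~$42$; closure under adding~$7$ then yields all $n\ge 14$. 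For $\Delta=8$ one gets $15$ and $16$ directly, hence $30,31,32$ and $45,\dots,48$; sporadic constructions --- for instance concatenating a suitable quaternary code with the $\F_4$-simplex, which triples the length while doubling the divisibility --- supply $49,50,51$ and enough consecutive lengths beyond~$60$, after which closure under adding~$15$ forces all $n\ge 60$. Verifying that the toolbox really attains every listed length is finite but tedious bookkeeping.

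The substantive part is ruling out the gaps, which I would attack in layers of increasing cost. First, crude bounds: a nonzero $2^r$-divisible code has minimum weight at least $2^r$, so the Griesmer bound together with the projectivity constraint $n\le 2^k-1$ already excludes the smallest lengths ($n\le 6$ for $\Delta=4$, $n\le 14$ for $\Delta=8$). Second, the moment method: fixing $n$ and the span dimension $k$, the Pless power moments determine $\sum_w A_w$, $\sum_w wA_w$ and $\sum_w w^2A_w$ from $n$ and $k$, while $2^r$-divisibility confines the possible weights $w$ to the short set $2^r\mathbb{Z}\cap[2^r,n]$, and for most forbidden $(n,k)$ the resulting linear system in the $A_w$ has no nonnegative integer solution. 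Third, for the lengths surviving both tests one descends in the divisibility level: puncturing a $2^r$-divisible code in the support of a minimum-weight codeword, or projecting its point set from a point, yields $2^{r-1}$-divisible codes of shorter length, which sets up an induction on~$r$ pushing the hardest cases down to $\Delta\in\{2,4\}$, already settled; the finitely many cases still remaining --- now in provably bounded ambient dimension --- are dispatched by exhaustive computer search.

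I expect the third layer to be the main obstacle, for two reasons. One needs a sharp \emph{a priori} bound on the span dimension~$k$ (short divisible codes cannot be too high-dimensional, but extracting a usable bound takes care) so that the residual search is genuinely finite and small; and one must handle the ``borderline'' lengths --- e.g.\ length~$12$ for $\Delta=4$, and several lengths in $\{17,\dots,59\}$ for $\Delta=8$ --- where the moment equations do admit a feasible-looking weight distribution realized by no code. For these one brings in structural information about few-weight projective point sets (two- and three-weight codes are classified in many of the relevant parameter ranges) or, failing that, relies on the computer enumeration; replacing the remaining computational steps by pen-and-paper arguments would be desirable.
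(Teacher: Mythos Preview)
The paper does not prove this proposition at all: it is quoted from the literature with the sentence ``see e.g.\ \cite{honold2018partial,honold2019lengths} for proofs and further references'', and then used as a black box. So there is no ``paper's own proof'' to compare against beyond that citation.

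Your plan is a faithful outline of how those references actually proceed: geometric reinterpretation, simplex/affine building blocks plus juxtaposition for attainability, and MacWilliams/standard-equation moment arguments together with residual-code descent and, ultimately, some computer classification for the stubborn gap lengths. Part~(a) is complete and correct as written. For~(b) your reduction is right---once $14,15,16,17,18,19,20$ are realised, closure under $+7$ gives all $n\ge 14$---so the only ``sporadic'' lengths you actually need are $17,18,19,20$; it is worth noting that complements in a full projective space preserve $4$-divisibility, which supplies several of these cheaply (e.g.\ $31-14=17$).

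That said, what you have submitted is explicitly a plan, not a proof: the attainability bookkeeping is deferred, and for the non-existence half you yourself flag that the moment equations alone do not kill every forbidden length and that the residual cases rest on classification results or computer search you have not carried out. This is exactly the state of the art---the cited papers also lean on enumeration for some of the $\Delta=8$ exclusions---so there is no wrong idea here, but as a self-contained argument it is incomplete. If the goal is merely to invoke the result, citing \cite{honold2018partial,honold2019lengths} as the paper does is the honest route; if the goal is an independent proof, the missing pieces are precisely the ones you identified.
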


\begin{definition}
  We say that a vector space partition $\cP$ has an \emph{$s$-supertail of type} $a_1^{m_1} a_2^{m_2}\dots a_s^{m_s}$, where 
  $a_1>a_2>\dots a_s\ge 1$, if $\cP$ contains exactly $m_j$ elements of dimension $a_j$ for all $1\le j\le s$, there exists at 
  least one element $A\in\cP$ with $\dim(A)>a_1$, and for all elements $B\in\cP$ with $\dim(B)\le a_1$ there exists an index 
  $1\le j\le s$ with $\dim(B)=a_j$.
\end{definition}
For the ease of notation we also allow the choice of $m_j=0$ and just speak of a supertail of a certain type. From Proposition~\ref{prop_lenghts_div_proj}  
we can directly conclude that certain types of supertails are impossible:
\begin{corollary}
  Let $\cP$ be a vector space partition of $\PG(v-1,2)$, then $\cP$ cannot have a supertail of one of the following types:
  \begin{itemize}
    \item $1^1$, $1^2$;
    \item $2^0 1^i$ for $i\in \{1,\dots,6,9,\dots,13\}$, $2^1 1^i$ for $i\in\{0,\dots, 3,6,\dots,10\}$, $2^2 1^i$ for $i\in \{0,3,\dots,7\}$, 
          $2^3 1^i$ for $i\in \{0,\dots,4\}$, $2^4 1^i$ for $i\in \{0,1\}$; and
    \item $3^a 2^b 1^c$ where $7a+3b+c<60$ and $7a+3b+c\notin\{0,15, 16, 30, 31, 32, 45, 46, 47, 48, 49, 50, 51\}$.        
  \end{itemize}
\end{corollary}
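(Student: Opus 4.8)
The plan is to obtain all three bullets as immediate consequences of the divisibility corollary stated just before \cref{prop_lenghts_div_proj}, combined with the length classification in \cref{prop_lenghts_div_proj}. Suppose, for contradiction, that $\cP$ is a vector space partition of $\PG(v-1,2)$ having a supertail of type $a_1^{m_{a_1}}\cdots a_s^{m_{a_s}}$ as in one of the listed cases, and put $k:=a_1$. By the definition of a supertail, $\cP$ contains at least one element of dimension strictly larger than $k$, and the elements of $\cP$ of dimension at most $k$ are precisely the $m_{a_j}$ elements of dimension $a_j$ for $j=1,\dots,s$. Since the elements of a vector space partition are pairwise disjoint, the point set $\cH_k$ is the disjoint union of these subspaces, so $\#\cH_k=\sum_{j=1}^s m_{a_j}\cdot[a_j]_2=\sum_{j=1}^s m_{a_j}\bigl(2^{a_j}-1\bigr)=:n$; moreover the point multiplicities are at most $1$, so $\cH_k$ yields a \emph{projective} binary linear code, and it is $2^{k}$-divisible by the divisibility corollary (using that $\cP$ has an element of dimension $>k$). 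It then only remains to verify, case by case, that the value of $n$ produced in each listed supertail type is an effective length forbidden by \cref{prop_lenghts_div_proj} for the relevant divisor $\Delta=2^{k}$, $k\in\{1,2,3\}$.

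For the first bullet, $k=1$ and $n=m_1\in\{1,2\}$, which violates \cref{prop_lenghts_div_proj}(a). For the second bullet, $k=2$ and $n=3m_2+m_1$, and one runs through $m_2=0,1,2,3,4$ (for $m_2\ge 5$ one already has $n\ge 15$, hence no obstruction), in each case intersecting the arithmetic progression $\{3m_2+m_1:m_1\ge 0\}$ with the set $\{1,\dots,6\}\cup\{9,\dots,13\}$ of lengths forbidden by \cref{prop_lenghts_div_proj}(b); the resulting values of $m_1$ are exactly those listed. For the third bullet, $k=3$ and $n=7a+3b+c$, and \cref{prop_lenghts_div_proj}(c) forbids every $n$ with $0<n<60$ and $n\notin\{15,16,30,31,32,45,\dots,51\}$, which is exactly the stated condition on $7a+3b+c$. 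In every listed case the hypothetical supertail therefore gives rise to a $2^{k}$-divisible projective binary code of impossible effective length, a contradiction.

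I do not expect any real obstacle: the argument is short, and the only genuine work is the elementary bookkeeping in the second bullet — checking the five values $m_2\in\{0,1,2,3,4\}$ and confirming that $m_2\ge 5$ contributes nothing — together with making sure the translation between "forbidden code lengths" from \cref{prop_lenghts_div_proj} and "forbidden exponents" in the statement is an exact match in both directions. It is also worth noting explicitly that the empty supertail (all $m_{a_j}=0$, i.e.\ $n=0$) is deliberately excluded from every bullet, since a length-$0$ code carries no divisibility obstruction.
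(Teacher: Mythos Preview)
Your proposal is correct and follows exactly the approach the paper intends: the paper states the corollary immediately after Proposition~\ref{prop_lenghts_div_proj} with the phrase ``From Proposition~\ref{prop_lenghts_div_proj} we can directly conclude,'' and your argument spells out precisely this direct conclusion via the $2^k$-divisibility corollary and the case-by-case length bookkeeping. There is nothing to add.
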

For literature on the supertail we refer e.g.\ to \cite{heden2013supertail,nuastase2018complete}.

\medskip

For small $n$ the projective $\Delta$-divisible $\F_q$-linear codes of effective length $n$ can be exhaustively generated with 
software packages like e.g.\ \texttt{Q-Extension} \cite{bouyukliev2007q} or \texttt{LinCode} \cite{bouyukliev2021computer}. 
Having the point sets at hand, we can check whether they can be partitioned into a certain number of planes, lines, and points, which 
excludes a few further supertail types. E.g.\ one can easily show that each $q^2$-divisible multiset of cardinality $q^2+q+1$ over
$\F_q$ has to be the characteristic function of a plane, so that there is no supertail of type $2^2 1^{q^2-q-1}$ over $\F_q$.
For enumeration results of projective binary $\Delta$-divisible codes for $\Delta\in\{2,4,8\}$ we refer to \cite{ubt_eref40887}.   
Since the corresponding codes are computationally shown to be unique we have:
\begin{lemma}
  \label{lemma_q_2_subspaces}
  Let $\cS$ be a $\Delta$-divisible set of cardinality $n$ over $\F_2$. 
  \begin{itemize}
    \item[(a)] If $(\Delta,n)=(2,3)$, then $\cS$ is the characteristic function of a line.
    \item[(b)] If $(\Delta,n)=(4,7)$, then $\cS$ is the characteristic function of a plane.
    \item[(c)] If $(\Delta,n)=(4,14)$, then $\cS$ is the sum of the characteristic functions of two disjoint planes.
    \item[(d)] If $(\Delta,n)=(8,15)$, then $\cS$ is the characteristic function of a solid.
    \item[(e)] If $(\Delta,n)=(8,30)$, then $\cS$ is the sum of the characteristic functions of two disjoint solids.
  \end{itemize}  
\end{lemma}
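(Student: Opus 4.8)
The plan is to derive parts (a), (b) and (d) from one elementary incidence argument and then to bootstrap from (b) and (d) to (c) and (e). Throughout, given a $\Delta$-divisible set $\cS$ I would first pass to its span $U:=\langle\cS\rangle$, of dimension $d$: every hyperplane of $U$ has the form $H\cap U$ for an ambient hyperplane $H$ with $U\not\le H$, and $|\cS\cap H|$ counts exactly the points of $H\cap U$ of multiplicity $1$, so $\cS$ is $\Delta$-divisible already as a set in $\PG(d-1,2)$, and $|\cS\cap K|\equiv|\cS|\pmod\Delta$ for every hyperplane $K$ of $U$.

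The key lemma is that a $2^{k-1}$-divisible set of exactly $2^k-1$ points must be the point set of a $k$-space; specialised to $k=2,3,4$ this is precisely (a), (b) and (d). To prove it, note that $0\le|\cS\cap K|\le 2^k-1$ together with $|\cS\cap K|\equiv 2^k-1\pmod{2^{k-1}}$ forces $|\cS\cap K|\in\{2^{k-1}-1,\,2^k-1\}$ for every hyperplane $K$ of $U$. Writing $x$ for the number of hyperplanes $K$ of $U$ that contain all of $\cS$, and counting the incidences $(P,K)$ with $P\in\cS$ and $P\le K$ in two ways, one gets
\[
(2^k-1)x+(2^{k-1}-1)\bigl(2^{d}-1-x\bigr)=(2^k-1)\bigl(2^{d-1}-1\bigr),
\]
which simplifies to $x=2^{d-k}-1$. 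If $d>k$ then $x\ge 1$, so some hyperplane of $U$ contains all of $\cS$, contradicting $U=\langle\cS\rangle$; hence $d=k$ and $\cS$ is the full point set of the $k$-space $U$.

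For (c) and (e), let $\cS$ have $2(2^k-1)$ points and be $2^{k-1}$-divisible with $k\in\{3,4\}$, and suppose $\cS$ contains the point set of some $k$-space $W$. Since $\chi_W$ is $2^{k-1}$-divisible (the quoted lemma of \cite{kiermaier2020lengths}) and differences of $\Delta$-divisible multisets are $\Delta$-divisible, $\cS-\chi_W$ is a $2^{k-1}$-divisible multiset; as $W\le\cS$ all its multiplicities are $0$ or $1$, so it is a $2^{k-1}$-divisible \emph{set} of $2^k-1$ points, hence a $k$-space $W'$ by part (b) (for $k=3$) resp.\ part (d) (for $k=4$). Then $W'=\cS\setminus W$ is disjoint from $W$ and $\cS=W\sqcup W'$, as asserted. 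So everything reduces to showing that $\cS$ \emph{contains} a $k$-space. This does not follow from the incidence count alone: the admissible values of $|\cS\cap K|$ are $\{2,6,10,14\}$ for $(\Delta,n)=(4,14)$ and $\{6,14,22,30\}$ for $(\Delta,n)=(8,30)$, leaving several numerically consistent hyperplane-intersection distributions. Here I would invoke the exhaustive classification of projective $\Delta$-divisible binary codes of lengths $14$ and $30$ from \cite{ubt_eref40887}: in each case there is, up to equivalence, a unique such code, whose generator matrix is block-diagonal with two simplex blocks, i.e.\ it is the code of two disjoint $k$-spaces; this contains a $k$-space (and in fact already yields the claim outright).

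The only non-routine point is thus the last one: a $4$-divisible $14$-set contains a plane, and an $8$-divisible $30$-set contains a solid. A pen-and-paper proof would have to push the averaging further --- for instance combining the first-moment identity above with the second-moment identity $\sum_K|\cS\cap K|^2=|\cS|\bigl(2^{d-1}-1\bigr)+|\cS|\bigl(|\cS|-1\bigr)\bigl(2^{d-2}-1\bigr)$ and a case distinction on the largest hyperplane intersection --- but I expect this to be laborious, so the cleanest route is the cited computer enumeration, in line with the surrounding remark that the corresponding codes are computationally shown to be unique.
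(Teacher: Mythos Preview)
Your argument is correct. The paper itself does not give a proof at all: it simply records that the projective $\Delta$-divisible binary codes of the relevant lengths have been enumerated in \cite{ubt_eref40887} and found to be unique, and notes that theoretical proofs are available in \cite{kurz2021divisible,honold2018partial}. So for parts (a), (b), (d) your incidence count is genuinely more than the paper offers --- it is precisely the kind of pen-and-paper argument the paper only points to. Your reduction for (c) and (e) (subtract off a contained $k$-space and apply (b)/(d) to the remainder) is also sound, and makes transparent that the only non-elementary input is the existence of a $k$-space inside $\cS$; the paper makes no such reduction and simply quotes the uniqueness of the code, which as you observe already gives the decomposition outright. In short: same endpoint for (c), (e), but you isolate exactly where the computation is needed; for (a), (b), (d) your route is strictly more self-contained.
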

Theoretical proofs and generalizations can e.g. be found in \cite[Section 11]{kurz2021divisible} or \cite{honold2018partial}. For $q>2$ a much stronger 
result is known. \cite[Theorem 13]{govaerts2003particular} directly implies:
\begin{theorem}
  Let $\cM$ be a $q^r$-divisible multiset of cardinality $\delta\cdot\tfrac{q^{r+1}-1}{q-1}$ over $\F_q$, where $r\in\N_{>0}$. If $q > 2$ and $1 \le\delta <
  \varepsilon$, where $q+\varepsilon$ is the size of the smallest non-trivial blocking sets in $\PG(2, q)$, then there exist $(r+1)$-spaces $S_1,\dots,S_\delta$ 
  such that
  $$
    \cM = \sum_{i=1}^\delta \chi_{S_i},
  $$
  i.e., $\cM$ is the sum of $(r+1)$-spaces.
\end{theorem}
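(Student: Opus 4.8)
The plan is to recognise the statement as a reformulation of \cite[Theorem~13]{govaerts2003particular} in the dual language of $q^r$-divisible codes, so that the actual work consists only in matching the hypotheses of that theorem. First I would normalise: replacing $\PG(v-1,q)$ by the subspace spanned by the support of $\cM$ changes neither the assumption nor the conclusion, so I may assume that $\cM$ spans $\PG(v-1,q)$. Recall that a multiset $\cM$ of $f$ points in $\PG(v-1,q)$ with $\min_H\cM(H)=m$ is, by definition, an $\{f,m;v-1,q\}$-minihyper, and that a (vector-space) $(r+1)$-space here is an $r$-dimensional projective subspace in the language of \cite{govaerts2003particular}; the goal is therefore to show that $\cM$ is a minihyper of the type treated there.

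Next I would pin down the parameters of the minihyper $\cM$. Since $\#\cM=\delta[r+1]_q=\delta q^r+\delta[r]_q$ and $\cM$ is $q^r$-divisible, every hyperplane $H$ satisfies $\cM(H)\equiv\#\cM\equiv\delta[r]_q\pmod{q^r}$. The hypothesis $\delta<\varepsilon$ forces $\delta\le q-1$ (one checks $\varepsilon\le q$ for every $q\ge 3$), and hence $\delta[r]_q=\delta\cdot\tfrac{q^r-1}{q-1}\le q^r-1<q^r$; together with $\cM(H)\ge 0$ this yields $\cM(H)\ge\delta[r]_q$ for every $H$. Conversely, summing $\cM(H)$ over all $[v]_q$ hyperplanes, each point lying on $[v-1]_q$ of them, shows that the average of $\cM(H)$ equals $\#\cM\cdot[v-1]_q/[v]_q<\#\cM/q=\delta[r]_q+\delta/q<\delta[r]_q+1$, so the minimum of $\cM(H)$ over hyperplanes is exactly $\delta[r]_q$ and is attained. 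Thus $\cM$ is a $\{\delta[r+1]_q,\ \delta[r]_q;\ v-1,\ q\}$-minihyper.

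Finally I would invoke \cite[Theorem~13]{govaerts2003particular}: for $q>2$ and $\delta<\varepsilon$, every minihyper with these parameters is a sum of $\delta$ subspaces of (vector-space) dimension $r+1$, which translated back yields $(r+1)$-spaces $S_1,\dots,S_\delta$ with $\cM=\sum_{i=1}^{\delta}\chi_{S_i}$. The main obstacle is not the high-level argument but the bookkeeping around the minihyper dictionary — in particular ensuring that the parameter $m=\delta[r]_q$ is genuinely \emph{forced}, which is exactly where the inequality $\delta[r]_q<q^r$, hence the hypothesis $\delta<\varepsilon$, enters, and that the shift of dimension conventions and the multiset (rather than set) setting are transported correctly. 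A self-contained alternative would be an induction on $\delta$: prove the base case $\delta=1$ (a $q^r$-divisible multiset of cardinality $[r+1]_q$ must be the characteristic function of an $(r+1)$-space), then locate an $(r+1)$-space $S$ with $\chi_S\le\cM$, subtract it, and recurse on the $q^r$-divisible multiset $\cM-\chi_S$ of cardinality $(\delta-1)[r+1]_q$; but extracting such an $S$ already requires the blocking-set analysis of the hyperplane multiplicities underlying \cite{govaerts2003particular}, so this route saves no essential effort.
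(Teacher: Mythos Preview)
Your proposal is correct and matches the paper's approach exactly: the paper does not give a proof at all but simply states that the theorem is directly implied by \cite[Theorem~13]{govaerts2003particular}. You have carried out precisely the dictionary translation the paper leaves implicit, verifying that a $q^r$-divisible multiset of cardinality $\delta[r+1]_q$ with $\delta<\varepsilon$ is a $\{\delta[r+1]_q,\delta[r]_q;v-1,q\}$-minihyper so that the cited result applies.
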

Using dimension arguments, c.f.\ the proof of Lemma~\ref{lemma_three_solids}, we conclude from Lemma~\ref{lemma_q_2_subspaces}.(c)-(e):  
\begin{corollary}
  Let $\cP$ be a vector space partition of $\PG(v-1,2)$, then $\cP$ cannot have a supertail of one of the following types:
  \begin{itemize}
    \item $2^3 1^5$; 
    \item $3^1 2^1 1^5$; and 
    \item $3^2 2^1 1^{13}$, $3^2 2^2 1^{10}$, $3^2 2^3 1^{7}$, $3^2 2^4 1^{4}$, $3^1 2^6 1^5$.  
  \end{itemize}
\end{corollary}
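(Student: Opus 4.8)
The plan is to exploit the uniqueness statements in Lemma~\ref{lemma_q_2_subspaces} together with a short incidence argument inside $\PG(2,2)$ and $\PG(3,2)$. Write $\cS$ for the set of points covered by the supertail, i.e.\ the points lying in elements of $\cP$ of dimension at most $a_1$, where $a_1$ is the largest part. By the definition of a supertail there is an element of $\cP$ of dimension exceeding $a_1$, so the above corollary on the divisibility of $\cH_k$ (applied with $k=a_1$) gives that $\cS$ is $2^{a_1}$-divisible. Counting points, $\#\cS=\sum_j m_j[a_j]_2$ equals $14$ for the type $2^3 1^5$ (here $a_1=2$, so $\cS$ is $4$-divisible), $15$ for $3^1 2^1 1^5$, and $30$ for each of $3^2 2^1 1^{13}$, $3^2 2^2 1^{10}$, $3^2 2^3 1^{7}$, $3^2 2^4 1^{4}$ and $3^1 2^6 1^5$ (here $a_1=3$, so $\cS$ is $8$-divisible). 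Hence Lemma~\ref{lemma_q_2_subspaces}(c)--(e) applies: $\cS$ is the disjoint union of two planes in the first case, a single solid in the second, and the disjoint union of two solids in the remaining five.

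Next I would isolate the dimension argument used in the proof of Lemma~\ref{lemma_three_solids}: if a subspace $B$ has all of its points inside a disjoint union $S_1\sqcup\dots\sqcup S_t$ of subspaces, then $B\subseteq S_i$ for a single $i$. Indeed, if $B$ met two components, a line joining a point of $B\cap S_i$ to a point of $B\cap S_j$ would lie in $B$ but would have its remaining point forced simultaneously into $S_i$ and into $S_j$. Consequently every plane and every line belonging to the supertail is contained in one of the components of $\cS$, and it suffices to check that no component can host the planes and lines assigned to it. Here I would use the elementary facts that in $\PG(2,2)$ any two lines meet (so a line--point partition of a plane uses at most one line); that in $\PG(3,2)$ every line meets every hyperplane (so, once a plane is part of a partition of a solid, the $8$-point complement contains no line); that two planes of a solid meet in a line (so a partition of a solid contains at most one plane); and that a solid has only $15$ points, fewer than the $18$ needed for six pairwise disjoint lines.

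With these tools the five families are dispatched quickly. For $2^3 1^5$ the three lines of the supertail are distributed over the two Fano planes of $\cS$, but each plane absorbs at most one line, leaving at most two --- a contradiction. For $3^1 2^1 1^5$ the plane sits inside the solid $\cS$, and its $8$-point complement cannot contain the prescribed line. For $3^2 2^1 1^{13}$, $3^2 2^2 1^{10}$, $3^2 2^3 1^{7}$ and $3^2 2^4 1^{4}$ the two planes must split as one per solid (two planes in a single solid being impossible), after which neither solid's $8$-point complement can host any of the supertail's $m_2\ge 1$ lines. For $3^1 2^6 1^5$ the single plane occupies one solid, whose complement carries no line, so all six lines would have to lie in the other solid --- impossible for a solid of $15$ points. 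The only point that needs care is the bookkeeping of which component of $\cS$ receives which supertail element; once one checks in each case that some element is forced into an impossible position, the argument closes. I do not expect any serious obstacle, since the geometry involved is that of the very small spaces $\PG(2,2)$ and $\PG(3,2)$.
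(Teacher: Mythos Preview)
Your approach is correct and matches what the paper has in mind: invoke Lemma~\ref{lemma_q_2_subspaces}(c)--(e) to identify $\cS$ as two disjoint planes, a solid, or two disjoint solids, and then argue inside these small spaces. The paper gives no more detail than ``using dimension arguments, c.f.\ the proof of Lemma~\ref{lemma_three_solids}'', so your write-up is in fact more explicit than the original.

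Two small points deserve tightening. First, your containment lemma as stated (``if $B$ has all its points in $S_1\sqcup\dots\sqcup S_t$ then $B\subseteq S_i$ for a single $i$'') is \emph{false} for $t\ge 3$: Lemma~\ref{lemma_meeting_lines} exhibits lines meeting each of three pairwise disjoint $k$-spaces in a point. You only ever use $t\le 2$, where the statement is true, so just restrict the claim accordingly. Second, your one-line justification is garbled: the third point of a line through $P_1\in S_i$ and $P_2\in S_j$ is $P_1+P_2$, which lies in \emph{neither} $S_i$ nor $S_j$ (if it were in $S_i$ then $P_2=P_1+(P_1+P_2)\in S_i$, contradiction), not ``simultaneously in both''. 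For $t=2$ this already gives the contradiction, since the third point must lie in $\cS=S_1\cup S_2$. With that correction the argument goes through exactly as you outline.
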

Lemma~\ref{lemma_q_2_subspaces}.(a) yields the fact that every vector space partition of $\PG(v-1,2)$ of type $(v-1)^{m_{v-1}}\dots 2^{m_2} 1^3$ is reducible (assuming $v>2$), i.e., 
the three points have to form a line. Similarly, from Lemma~\ref{lemma_q_2_subspaces}.(b) we an conclude that every vector space partition of $\PG(v-1,2)$ of type $(v-1)^{m_{v-1}}\dots 
3^{m_3} 1^7$ is reducible (assuming $v>3$), i.e., the seven points have to form a plane.
 
While Lemma~\ref{lemma_q_2_subspaces} discusses the parameters of (repeated) simplex codes, i.e., duals of Hamming codes, there are also known uniqueness results for 
the parameters of first order Reed-Muller codes, see e.g.\ \cite{kurz2021generalization}. Translated to geometry we have:
\begin{lemma}
  Let $\cS$ be a $2^r$-divisible set of $2^{r+1}$ points in $\PG(v-1,2)$, where $r\in\mathbb{N}_{>0}$. Then, $\cS$ is the characteristic function of an
  affine $(r+1)$-space.
\end{lemma}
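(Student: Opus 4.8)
The plan is to pin down two facts about $W:=\langle\cS\rangle$: that there is a \emph{unique} hyperplane $H_0$ of $W$ with $\cS\cap H_0=\emptyset$, and that $\dim W=r+2$. Granting both, $\cS$ is contained in $W\setminus H_0$, which is an affine $(r+1)$-space with $2^{(r+2)-1}=2^{r+1}=\#\cS$ points; hence $\cS=W\setminus H_0$ and the lemma follows. So the whole proof reduces to establishing these two facts. Throughout, fix a basis of $W$ and regard the points of $\cS$ as the columns of a $k\times n$ generator matrix of a projective binary linear code $C_\cS$ of length $n:=\#\cS=2^{r+1}$ and dimension $k:=\dim W$ (it is projective since $\cS$ is a set of points); by hypothesis $C_\cS$ is $2^r$-divisible, and the codeword attached to a hyperplane $H$ of $W$ has weight $\#\cS-\cS(H)$.

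First I would compute the weight enumerator. Since $n=2^{r+1}$, every nonzero codeword has weight $2^r$ or $2^{r+1}$. Let $a,b$ count the codewords of weight $2^r$, $2^{r+1}$. Counting all codewords gives $1+a+b=2^{k}$, and since every coordinate of a projective code is nonzero in exactly half of the codewords, the first power moment gives $2^r a+2^{r+1}b=n\cdot 2^{k-1}=2^{r+k}$, i.e.\ $a+2b=2^{k}$. Subtracting the two identities yields $b=1$. Thus there is exactly one codeword of weight $n$, necessarily the all-ones word, so there is a (unique) hyperplane $H_0$ of $W$ with $\cS\cap H_0=\emptyset$. This settles the first fact.

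Next I would pass to the affine chart cut out by $H_0$. Choose coordinates on $W\cong\F_2^{k}$ with $H_0=\{x_k=0\}$; then every point of $\cS$ has last coordinate $1$, so $\cS=\{(Q_i,1):1\le i\le n\}$ with the $Q_i\in\F_2^{k-1}$ pairwise distinct, and $\langle\cS\rangle=W$ forces the $Q_i$ to affinely span $\F_2^{k-1}$. For a nonzero linear form $g$ on $\F_2^{k-1}$, the codewords of $C_\cS$ attached to $(Q,x_k)\mapsto g(Q)$ and $(Q,x_k)\mapsto g(Q)+x_k$ have weights $\#\{i:g(Q_i)=1\}$ and $\#\{i:g(Q_i)=0\}$, both in $\{0,2^r,2^{r+1}\}$ by $2^r$-divisibility; since $\{Q_i\}$ affinely spans $\F_2^{k-1}$ it cannot lie in the proper affine subspace $\ker g$ nor in the proper affine subspace $\{g=1\}$, so neither weight is $0$ or $n$, whence $\#\{i:g(Q_i)=1\}=2^r=n/2$ for every nonzero $g$. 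In other words the binary code with the $Q_i$ as columns is a one-weight code of constant weight $n/2$. Summing $\#\{i:g(Q_i)=1\}$ over all $2^{k-1}-1$ nonzero forms $g$ in two ways gives
$$
 \bigl(2^{k-1}-1\bigr)\cdot\tfrac n2 \;=\;\sum_{g\neq 0}\#\{i:g(Q_i)=1\}\;=\;\sum_{i=1}^{n}\#\{g\neq 0:g(Q_i)=1\}\;=\;(n-z)\cdot 2^{k-2},
$$
where $z:=\#\{i:Q_i=0\}$, because $\#\{g:g(Q_i)=1\}$ equals $2^{k-2}$ if $Q_i\neq 0$ and $0$ if $Q_i=0$. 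This simplifies to $n=z\cdot 2^{k-1}$; as the $Q_i$ are distinct we have $z\in\{0,1\}$, and $z=0$ would force $n=0$, so $z=1$ and $n=2^{k-1}$. With $n=2^{r+1}$ this gives $k=r+2$, which is the second fact, and the proof is complete.

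The one genuinely delicate point is the third paragraph: realizing that $2^r$-divisibility \emph{together with the exact length} $2^{r+1}$ is rigid enough that, after moving to the affine chart determined by the unique hyperplane missing $\cS$, every linear form becomes perfectly balanced on $\cS$ — this is what turns the question into a statement about one-weight codes. Once that is seen, what remains is a one-line count; alternatively one may invoke Bonisoli's classification of one-weight binary codes to get $n\in\{2^{k-1}-1,2^{k-1}\}$ and then finish by parity of $n$.
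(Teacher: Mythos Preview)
Your argument is correct. The two standard moment identities indeed force $b=1$, so the all-ones word is the unique weight-$n$ codeword and gives the missing hyperplane $H_0$; and your double count in the affine chart cleanly pins down $k=r+2$. One small remark: in Step~1 you appeal to projectivity for the first moment, but all you actually use is that the columns are nonzero, which holds automatically since points of $\PG(v-1,2)$ are nonzero vectors.

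As for comparison: the paper does not prove this lemma at all---it is quoted as a known uniqueness result for the parameters of first-order Reed--Muller codes, with a reference to \cite{kurz2021generalization}. Your proof is in fact a direct, self-contained derivation of that uniqueness: you show the code $C_{\cS}$ has length $2^{r+1}$, dimension $r+2$, and nonzero weights $\{2^r,2^{r+1}\}$, which are exactly the parameters of $\mathrm{RM}(1,r+1)$, and you recover the affine-space description geometrically rather than by invoking the coding-theoretic classification. So you have supplied what the paper outsources; the Bonisoli alternative you mention at the end would be closer in spirit to how such results are usually packaged in the literature, but your elementary count is arguably cleaner here.
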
 
In our context an important implication is that such a set $\cS$ does not contain a line, so that we conclude:
\begin{lemma}
  Let $\cP$ be a vector space partition of $\PG(v-1,2)$, then $\cP$ cannot have a supertail of one of the following types:
  \begin{itemize}
    \item $2^1 1^5$ and 
    \item $3^0 2^1 1^{13}$, $3^0 2^2 1^{10}$, $3^0 2^3 1^7$, $3^0 2^4 1^4$.   
  \end{itemize}
\end{lemma}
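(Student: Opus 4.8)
The plan is to reduce each of the five types to a contradiction with the line-freeness of affine spaces over $\F_2$. Suppose $\cP$ is a vector space partition of $\PG(v-1,2)$ having a supertail of one of the listed types, and let $a_1$ denote its largest supertail dimension, so $a_1=2$ for the type $2^11^5$ and $a_1=3$ for each of $3^02^11^{13}$, $3^02^21^{10}$, $3^02^31^7$, $3^02^41^4$. By definition a supertail forces the existence of an element of $\cP$ of dimension strictly larger than $a_1$, so the corollary on the divisibility of the point sets $\cH_k$ applies with $k=a_1$: the point set $\cH_{a_1}$ is $2^{a_1}$-divisible. Moreover, by the defining property of a supertail, $\cH_{a_1}$ is precisely the union of all supertail elements, so $\#\cH_{a_1}=\sum_j m_j[a_j]_2$.

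Next I would evaluate this cardinality in each case using $[1]_2=1$, $[2]_2=3$, $[3]_2=7$. For $2^11^5$ we get $3+5=8=2^{a_1+1}$, and for $3^02^b1^c$ with $(b,c)\in\{(1,13),(2,10),(3,7),(4,4)\}$ we get $3b+c=16=2^{a_1+1}$ in every instance. Note that $8$ and $16$ are admissible effective lengths of $4$- and $8$-divisible binary projective codes by Proposition~\ref{prop_lenghts_div_proj}, so a pure length argument does not suffice; but $\cH_{a_1}$ is now seen to be a $2^{a_1}$-divisible set of exactly $2^{a_1+1}$ points, so the lemma on first order Reed--Muller codes, applied with $r=a_1$, shows that $\cH_{a_1}$ is the characteristic function of an affine $(a_1+1)$-space, and in particular contains no line.

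To reach the contradiction I would then note that each of the five types contains at least one line: the type $2^11^5$ has a single element of dimension $2$, and each type $3^02^b1^c$ has $b\ge 1$ elements of dimension $2$. Pick such a line $L\in\cP$. Since $\dim L=2\le a_1$ and $\cP$ partitions the point set, every point of $L$ lies in an element of dimension at most $a_1$, hence all three points of $L$ belong to $\cH_{a_1}$; this contradicts the line-freeness of $\cH_{a_1}$, so no such $\cP$ exists.

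I do not expect a genuine obstacle here; the argument is a short bookkeeping application of the two preceding lemmas, and the writeup will mainly consist of the cardinality computations above. The only points deserving a little attention are that for the four types $3^02^b1^c$ one must work with $\cH_3$, which coincides with $\cH_2$ since the supertail contains no planes, and observe that its $8$-divisibility is supplied by the mandatory element of dimension at least $4$ rather than by anything internal to the supertail; and that the geometric input — an affine space over $\F_2$, viewed inside an ambient subspace $W$ as the set of points of $W$ on which a fixed nonzero linear functional equals $1$, cannot contain a projective line $\{a,b,a+b\}$ because a functional taking the value $1$ at $a$ and $b$ takes the value $0$ at $a+b$ — is precisely the implication of the Reed--Muller lemma recorded above.
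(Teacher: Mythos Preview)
Your argument is correct and follows precisely the paper's intended approach: compute that each supertail has $2^{a_1+1}$ points, invoke the Reed--Muller lemma with $r=a_1$ to identify the point set with an affine $(a_1+1)$-space, and derive a contradiction from the presence of a line. Your remark that for the $3^0 2^b 1^c$ types one needs the $8$-divisibility coming from an element of dimension $\ge 4$ (rather than merely $\ge 3$) is exactly the reason the paper writes these cases with the leading $3^0$, and you handled it correctly.
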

For larger field sizes there exist examples different to affine subspaces. They can be obtained by the so-called cylinder construction, see e.g.\ \cite{de2019cylinder}, and share the 
property that those point sets also do not contain a line. A few parameters where $q^r$-divisible sets of $q^{r+1}$ points in $\PG(n-1,q)$ have to be obtained by the cylinder construction 
are recently discussed in \cite{kurz2021generalization}.

\bigskip

We say that a multiset of points $\cM$ in $\PG(v-1,q)$ is \emph{spanning} if the points $P$ with non-zero multiplicity $\cM(P)>0$ span the entire ambient space. For a given multiset 
$\cM$ of points in $\PG(v-1,q)$ we denote by $a_i$ the number of hyperplanes $H$ such that $\cM(H)=i$. The vector $\left(a_i\right)_{i\in\mathbb{N}}$ is called the \emph{spectrum} of 
$\cM$. If $\cM$ is spanning, then we have $a_{\#\cM}=0$. By considering $\cM$ 
restricted to $K\cong \PG(k-1,q)$ we can always assume that $\cM$ is spanning if we choose a suitable integer for $k$. For the ease of notation we 
assume that $\cM$ is spanning in $\PG(k-1,q)$ in the following. Counting the number of hyperplanes in $\PG(k-1,q)$ gives   
\begin{equation}
  \sum_{i} a_i=\frac{q^k-1}{q-1}\label{se1}
\end{equation} 
and counting the number of pairs of points and hyperplanes gives
\begin{equation}
  \sum_{i} ia_i=\#\cM\cdot \frac{q^{k-1}-1}{q-1}.\label{se2}
\end{equation} 
For the third equation we assume $\cM(P)\in\{0,1\}$ for every point $P$, i.e., there is no point with multiplicity at least $2$.  
Double-counting the incidences between pairs of elements in $\cM$ and hyperplanes gives

\begin{equation}
  \sum_{i} {i \choose 2} a_i={{\#\cM}\choose 2}\cdot \frac{q^{k-2}-1}{q-1}.\label{se5}
\end{equation}
We call the equations (\ref{se1})-(\ref{se5}) the \emph{standard equations} for sets of points.  

\section{Vector space partitions in $\PG(7,2)$}
\label{sec_vsp_pg_7_2} 

For each dimension $v$ of the ambient space and each field size $q$ the packing condition in Equation~(\ref{eq_packing_condition}) combined with 
the dimension condition in Equation~(\ref{eq_dimension_condition}) leave over a finite list of possibly types of vector space partitions in $\PG(v-1,q)$. 
The conditions on the supertail mentioned in the previous section eliminate a few more cases. Here we will treat the case of $\PG(7,2)$, which is 
quite comprehensive compared to the case $\PG(6,2)$. In the end it will turn out that there are more than ten thousand different possible types of    %% 11.005
vector space partitions in $\PG(7,2)$. E.g.\ there are vector space partitions of type $4^4 3^1 2^{61-i} 1^{5+3i}$ for each integer $0\le i\le 61$. 
Of course it is sufficient to give constructions for the {\lq\lq}irreducible cases{\rq\rq} only. I.e., in our example it suffice to give an 
example of type $4^4 3^1 2^{61} 1^{5}$ and then to replace $i$ lines by its three contained points each. We have utilized the following general 
reduction rules:
\begin{lemma}
  Let $\cP$ be a vector space partition of type $(v-1)^{m_{v-1}} \dots 2^{m_2}1^{m_1}$ in $\PG(v-1,2)$. 
  \begin{itemize}
    \item If $m_2>0$, then there also exists a vector space partition of type $(v-1)^{m_{v-1}} \dots 3^{m_3} 2^{m_2-1}1^{m_1+3}$.
    \item If $m_3>0$, then there also exists a vector space partition of type $(v-1)^{m_{v-1}} \dots 4^{m_4} 3^{m_3-1} 2^{m_2}1^{m_1+7}$.
    \item If $m_3>0$, then there also exists a vector space partition of type $(v-1)^{m_{v-1}} \dots 4^{m_4} 3^{m_3-1} 2^{m_2+1}1^{m_1+4}$.
    \item If $m_4>0$, then there also exists a vector space partition of type $(v-1)^{m_{v-1}} \dots 5^{m_5} 4^{m_4-1} 3^{m_3} 2^{m_2}1^{m_1+15}$.
    \item If $m_4>0$, then there also exists a vector space partition of type $(v-1)^{m_{v-1}} \dots 5^{m_5} 4^{m_4-1} 3^{m_3} 2^{m_2+5}1^{m_1}$.
    \item If $m_4>0$, then there also exists a vector space partition of type $(v-1)^{m_{v-1}} \dots 5^{m_5} 4^{m_4-1} 3^{m_3+1} 2^{m_2}1^{m_1+8}$.
  \end{itemize}
\end{lemma}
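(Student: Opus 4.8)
The plan is to derive all six statements from one uniform substitution principle: if $A\in\cP$ is an element with $\dim(A)=k\ge 2$ and $\cQ_A$ is \emph{any} vector space partition of $A$, regarded as a copy of $\PG(k-1,2)$, into subspaces, then $\bigl(\cP\setminus\{A\}\bigr)\cup\cQ_A$ is again a vector space partition of $\PG(v-1,2)$. Indeed, the point set of $A$ is partitioned by the elements of $\cQ_A$, while every point outside $A$ still lies in a unique element of $\cP\setminus\{A\}$; and the newly inserted subspaces lie inside $A$, hence are pairwise disjoint and disjoint from all surviving elements, so no element is repeated. With this in hand each claim reduces to exhibiting an appropriate $\cQ_A$ for a single element $A$ of the relevant dimension and then tracking how the type vector changes.

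First I would dispose of the ``trivial'' splits into points, using $[2]_2=3$, $[3]_2=7$, $[4]_2=15$: replacing a line by its three points yields the first statement, replacing a plane by its seven points yields the second, and replacing a solid by its fifteen points yields the fourth. Next come the two mixed splits that use only a hyperplane and its complement. For the third statement, take a plane $A$ with $m_3>0$, choose a line $\ell\le A$, and let $\cQ_A$ consist of $\ell$ together with the $[3]_2-[2]_2=4$ points of $A$ not on $\ell$; this is a partition of $A$ of type $2^1 1^4$, and the substitution changes the type to $(v-1)^{m_{v-1}}\dots 4^{m_4}3^{m_3-1}2^{m_2+1}1^{m_1+4}$. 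For the sixth statement, take a solid $A$ with $m_4>0$, choose a plane $H\le A$, and let $\cQ_A$ consist of $H$ together with the $[4]_2-[3]_2=8$ points of $A\setminus H$; this partition of $A$ has type $3^1 1^8$ and yields $(v-1)^{m_{v-1}}\dots 5^{m_5}4^{m_4-1}3^{m_3+1}2^{m_2}1^{m_1+8}$.

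The only step requiring a non-trivial ingredient is the fifth statement. Here I would invoke the classical fact that $\PG(3,q)$ admits a line spread, i.e.\ a partition of its point set into pairwise disjoint lines --- for example the regular (Desarguesian) spread coming from the field extension $\F_{q^2}/\F_q$, or, for $q=2$, five explicitly written down lines. For $q=2$ such a spread consists of $[4]_2/[2]_2=5$ mutually disjoint lines covering the fifteen points of a solid $A$. Taking $\cQ_A$ to be this spread of $A$ gives a partition of $A$ of type $2^5$, so the substitution produces a vector space partition of type $(v-1)^{m_{v-1}}\dots 5^{m_5}4^{m_4-1}3^{m_3}2^{m_2+5}1^{m_1}$, as claimed.

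In summary, all six reductions are instances of the substitution principle, and the only place where one has to do more than count points is the fifth, where one needs a line spread of $\PG(3,2)$; since spreads of $\PG(3,q)$ are a standard and elementary object, there is no real obstacle. The remaining care is purely bookkeeping: checking in each case that exactly one element of the stated dimension is removed and that the inserted elements change the multiplicities $m_1,m_2,m_3$ by the stated amounts.
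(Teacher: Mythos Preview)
Your proof is correct and is exactly the argument the paper has in mind; in fact the paper does not even write out a proof of this lemma, presenting it simply as a list of ``general reduction rules'' that follow from replacing a single element $A\in\cP$ by a vector space partition of $A$. Your formalization of the substitution principle and your identification of the one nontrivial case (the line spread of $\PG(3,2)$ for the fifth item) are precisely the expected content.
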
 
Nevertheless, still a lot of examples of vector space partitions need to be constructed. To this end we utilize integer linear programming (ILP) formulations. 
For each $i$-space $S$ we introduce the binary variable $x_S^i\in\{0,1\}$ with the meaning $x_S^i=1$ iff $S$ is contained in the vector space partition $\cP$ in $\PG(v-1,q)$. 
The partitioning condition is modeled by
\begin{equation}
  \sum_{i=1}^{v-1} \sum_{S\,:\,\dim(S)=i, P\le S} x_S^i = 1
\end{equation} 
for each point $P$. Introducing the counting variables
\begin{equation}
  m_i= \sum_{S\,:\,\dim(S)=i} x_S^i
\end{equation}
for each dimension $1\le i\le v-1$, we can prescribe the type or maximizing/minimizing certain values $m_i$ while prescribing the others. Of course this general ILP 
has a huge number of variables and constraints. E.g.\ there are $200\,787$ solids in $\PG(7,2)$. Luckily enough we can use the collineation group of $\PG(7,2)$ to 
reduce the search space a bit. It is well known that the collineation group acts transitively on $i$-spaces as well as pairs of disjoint $i$- and $j$-spaces, including the case $i=j$. 
For triples of disjoint $i$-spaces $A$, $B$, $C$ the orbits have $\dim(\langle A,B,C\rangle)$ as invariant that can vary between $2i$ and $\min\{v,3i\}$, where $v=8$ in 
our situation. So, we can prescribe up to three elements of the vector space partition and search for the others. 

In the special case where $m_4$ is relatively large, we have considered a Desarguesian spread for solids in $\PG(7,2)$, see e.g.\ \cite{lunardon1999normal}. So, let $\mathcal{S}$ be 
such a set of $17$ pairwise disjoint solids in $\PG(7,2)$ (that form a vector space partition of type $4^{17}$). We can restrict the search space for the ILP by 
forcing the used solids to be contained in $\mathcal{S}$, i.e., we set $x_S^4=0$ for every solid $S$ that is not contained in $\mathcal{S}$. Another starting point 
for vector space partitions are so-called lifted MRD (maximum rank distance) codes, see e.g.\ \cite{sheekey201913}, that e.g.\ give vector space partitions of types $6^1 2^{64}$, 
$5^1 3^{32}$, and also $4^{17}$. 

For ILP formulations of search problems in incidence structures there is a general (heuristic) method to reduce the search space -- the so-called Kramer--Mesner method \cite{kramer1976t}. 
Here a suitable subgroup $G$ of the automorphism group of the desired object is assumed. The action of the group $G$ partitions the variables into orbits and we assume that 
variables in the same orbit have the same value. I.e., in our context the vector space partition consists of entire orbits of solids, planes, and so on. This approach reduces the 
number of variables in terms of the order of $G$. Also the number of constraints is automatically reduced since usually several constraints become identical if we use orbit 
variables. For a more detailed and precise description of the method we refer e.g.\ to \cite{kohnert2008construction} where sets of subspaces in $\PG(v-1,q)$ with restricted 
intersection dimension were considered. Via this method we can, of course, find examples whose automorphism group contains $G$ as a subgroup only. So, as mentioned, the method 
is heuristic, but quite successfully applied in the literature. In our context we have used groups $G$ with orders up to $16$. 

As our setting is quite comprehensive we refrain from stating the explicit details for the cases. However, explicit vector space partitions for specific types in $\PG(7,2)$
can be obtained from the author upon request. While it took quite some time to compute an example for each feasible case, we now focus on the complementary non-existence results.

\bigskip

\begin{lemma}
  Let $\cP$ be a vector space partition of $\PG(v-1,2)$, then $\cP$ cannot have a supertail of type $2^4 1^5$.
\end{lemma}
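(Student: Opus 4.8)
The plan is to analyse the point set $\cH_2$ consisting of the $12$ points on the four lines together with the five isolated points. By the supertail definition $\cP$ contains an element of dimension $>2$, so by the Corollary $\cH_2$ is a $4$-divisible set of $17$ points, while $\cH_1$ (the five isolated points) is $2$-divisible. First I would pin down $\cH_1$. Over $\F_2$ a set of distinct projective points is $2$-divisible precisely when the sum of the corresponding vectors vanishes, so the five points are linearly dependent; and five distinct nonzero vectors of $\F_2^3$ never sum to $0$ (the seven of them do, so five of them sum to the sum of the remaining two, which are distinct), hence they cannot span $\PG(2,2)$ or anything smaller. Therefore $\cH_1$ spans a solid $\Sigma$ and is a frame of it, i.e.\ an ovoid ($5$-cap) of $\Sigma\cong\PG(3,2)$: every plane of $\Sigma$ meets $\cH_1$ in exactly $1$ or $3$ points ($5$ tangent, $10$ secant planes), and the remaining $10$ points of $\Sigma$ are partitioned by the traces $A\cap\Sigma$ of the members $A\in\cP$ other than the five point-elements.

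Next I would bound $k:=\dim\langle\cH_2\rangle$. A $4$-divisible binary code is doubly even, hence self-orthogonal, so $\dim\langle\cH_2\rangle\le 17/2$, i.e.\ $k\le 8$; and $|\cH_2|=17>15$ forces $k\ge 5$. Restricting to the span, we may thus assume $\cH_2$ spans $\PG(k-1,2)$ with $5\le k\le 8$, so only finitely many configurations remain.

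For the contradiction I would work hyperplane by hyperplane. For a hyperplane $H$ of $W:=\langle\cH_2\rangle$ one has $\cH_2(H)=|\cH_1\cap H|+\sum_{i=1}^{4}|\ell_i\cap H|$, where each $|\ell_i\cap H|\in\{1,3\}$ and $|\cH_1\cap H|$ equals $5$ if $\Sigma\subseteq H$ and otherwise $|\cH_1\cap(H\cap\Sigma)|\in\{1,3\}$ according as $H\cap\Sigma$ is a tangent or a secant plane of the ovoid. Since $\cH_2(H)\equiv 17\equiv 1\pmod 4$, writing $t_H$ for the number of the four lines contained in $H$ one obtains the clean parity link "$t_H$ is odd $\iff$ $H\cap\Sigma$ is a secant plane of $\cH_1$". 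The number of such hyperplanes is forced by the ovoid to be $10\cdot 2^{k-4}$, so double counting $\sum_H(-1)^{t_H}$ by inclusion--exclusion over the subsets of $\{\ell_1,\dots,\ell_4\}$ (using that the lines are pairwise disjoint, so each pair spans a $4$-space) turns this into divisibility-type constraints on the dimensions of $\langle\ell_i,\ell_j,\ell_l\rangle$ and $\langle\ell_1,\dots,\ell_4\rangle$; combined with the standard equations for $\cH_2$ (whose hyperplane values lie in $\{1,5,9,13\}$) and with the partition of $\Sigma\setminus\cH_1$ noted above, these cut the surviving possibilities down to a short list. The remaining near-misses I would rule out by appealing to the enumeration of $4$-divisible projective binary linear codes of effective length $17$ (cf.\ \cite{ubt_eref40887}) and checking directly that none of the corresponding $17$-point sets splits into four lines and five points.

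I expect the last step to be the main obstacle. Unlike the neighbouring supertail cases (types $2^1 1^5$, $2^3 1^5$, \dots), there is no uniqueness or structure theorem available for $4$-divisible sets of $17$ points---Lemma~\ref{lemma_q_2_subspaces} only reaches cardinality $14$, and Proposition~\ref{prop_lenghts_div_proj} permits length $17$---so after squeezing out the parity and counting conditions one apparently still needs a finite computer search over the enumerated codes to close the case.
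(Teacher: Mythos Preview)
Your final step---appeal to the enumeration of projective $4$-divisible binary codes of length $17$ and check that none of the corresponding point sets contains four pairwise disjoint lines---\emph{is} the paper's entire proof. The paper simply cites \cite{ubt_eref40887} to say there are exactly three such sets, one each in dimensions $6$, $7$, $8$, and records that only the $6$-dimensional one even contains three disjoint lines, so four are impossible in every case.

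All of your preliminary structure (the ovoid description of $\cH_1$, the hyperplane parity link $t_H$ odd $\Leftrightarrow$ secant plane, the inclusion--exclusion over subsets of lines) is correct as far as it goes, but it does not buy you anything: you never actually produce the promised ``short list'', and you concede yourself that the argument still bottoms out in the computer enumeration. Since that enumeration already has only three items, there is nothing to trim; the scaffolding just obscures the one-line proof. If you want a genuinely different contribution, the interesting challenge would be to \emph{replace} the enumeration by the structural analysis you started---e.g.\ push the parity/standard-equation computation far enough to pin down the spectrum of $\cH_2$ and then argue directly that four disjoint lines force a contradiction---rather than use it as a warm-up before citing the same table.
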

\begin{proof}
  There are three different $4$-divisible sets of cardinality $17$, one for each of the dimensions $k\in \{6,7,8\}$, see \cite{ubt_eref40887}. Only the $6$-dimensional 
  point set admits three disjoint lines and there cannot be four disjoint lines.
  %% analyze_div3.cpp
\end{proof}
As a consequence, there does not exist a vector space partition of type $3^{34} 2^4 1^5$ of $\PG(7,2)$ but there exist vector space partitions of type $3^{34} 2^{3-i} 1^{8+3i}$ 
for all integers $0\le i\le 3$. A set of $34$ pairwise disjoint planes in $\PG(7,2)$ was constructed for the first time in \cite{el2010maximum}. Now several thousand non-isomorphic 
examples are known, see \cite{honold2018partial}.  

\begin{lemma}
  \label{lemma_meeting_lines}
  For $k\ge 3$ let $K_1,K_2,K_3$ be three pairwise disjoint $k$-spaces in $\PG(v-1,2)$ and $\cL$ be the set of lines that intersect each $K_i$ in exactly a point. Then, the lines in $\cL$ 
  are pairwise disjoint and $\left\{ L\cap K_i\,:\, L\in\cL\right\}$ forms a subspace for each index $1\le i\le 3$. 
\end{lemma}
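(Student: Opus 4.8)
The plan is to exploit the fact that three pairwise disjoint $k$-spaces with $k\ge 3$ span at least a $6$-dimensional space, so that any two of the ``transversal'' lines in $\cL$ are forced to be disjoint by a dimension count, and then to identify the trace of $\cL$ on each $K_i$ with a projective subspace by showing it is closed under the natural spanning operation. First I would fix coordinates: write $U := \langle K_1, K_2\rangle$, which is a $2k$-space since $K_1\cap K_2=\{0\}$, and note that for $k\ge 3$ we have $2k\ge 6$. A line $L\in\cL$ meets $K_1$ in a point $P_1$ and $K_2$ in a point $P_2$, hence $L=\langle P_1,P_2\rangle\subseteq U$; its third intersection point $P_3 = L\cap K_3$ therefore lies in $U\cap K_3$. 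So every line of $\cL$ is determined by its pair $(P_1,P_2)\in K_1\times K_2$, and the content of the lemma is a constraint on which pairs can occur.

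Next I would prove disjointness. Suppose $L, L'\in\cL$ share a point $Q$. If $Q\in K_1$ then $Q = L\cap K_1 = L'\cap K_1$, so $L$ and $L'$ both pass through the same point of $K_1$ and the same reasoning at $K_2$ (using that $L,L'$ each meet $K_2$) gives $L\cap K_2, L'\cap K_2$; if these coincide the lines are equal, and if not then $\langle L,L'\rangle$ is a plane meeting $K_2$ in at least two points, hence in a line, contradicting that a line of $\cL$ meets $K_2$ in exactly a point — wait, more carefully: the right argument is that $Q\notin K_1\cup K_2\cup K_3$ forces $\langle L,L'\rangle$ to be a plane $\pi$ with $\pi\cap K_i$ a line for each $i$, and then $\pi$ contains three pairwise disjoint lines $\pi\cap K_1,\pi\cap K_2,\pi\cap K_3$, impossible in a plane of $\PG(v-1,2)$ which has only $7$ points and cannot be partitioned by three pairwise disjoint lines (that would need $9$ points). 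Handling the case $Q\in K_i$ separately: then $L,L'$ meet $K_i$ in the common point $Q$, and if they also meet some $K_j$ ($j\ne i$) in distinct points, $\langle L,L'\rangle$ is again a plane cutting $K_j$ in a line while cutting $K_i$ in at least the point $Q$ plus, since $L\ne L'$, actually in a line through $Q$ — again three disjoint lines in a plane. Thus $L=L'$, proving the lines of $\cL$ are pairwise disjoint.

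For the subspace claim, set $\cS_i := \{L\cap K_i : L\in\cL\}$, a set of points in $K_i\cong\PG(k-1,2)$. I would show $\cS_i$ is closed under ``third point on a line'': take $L, L'\in\cL$ with distinct traces $P_i, P_i'$ on $K_i$; the third point of the line $\langle P_i,P_i'\rangle\subseteq K_i$ is $P_i'' := P_i + P_i'$ (working in $\F_2$). I must produce a line $L''\in\cL$ through $P_i''$. The natural candidate is $L'' := $ the line through $P_i''$ determined as follows: consider the plane $\pi := \langle L, L'\rangle$; by the disjointness argument just given $\pi$ meets $K_j$ in exactly a point $P_j'' = P_j + P_j'$ for each $j\ne i$ (it cannot meet $K_j$ in a line, else three disjoint lines in a plane), and $\pi$ meets $K_i$ in the line $\langle P_i,P_i'\rangle$. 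The line $L'' := \langle P_i'', P_j''\rangle$ for some fixed $j\ne i$ then lies in $\pi$, meets $K_i$ in $P_i''$ and $K_j$ in $P_j''$; its third point is $P_i'' + P_j'' = (P_i+P_j) + (P_i'+P_j')$, and since $L = \langle P_i, P_j, P_\ell\rangle$ with $P_i+P_j+P_\ell = 0$ (collinearity) we get that this third point equals $P_\ell + P_\ell' = P_\ell''\in K_\ell$ for the remaining index $\ell$. Hence $L''$ meets all three $K_i$ in exactly a point, so $L''\in\cL$ and $P_i''\in\cS_i$. Since a subset of $\PG(k-1,2)$ containing $0$-dimensionally many points and closed under this operation is exactly an $\F_2$-subspace, $\cS_i$ is a subspace.

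The main obstacle I anticipate is the bookkeeping in the last paragraph: one must consistently track which point of each $K_i$ a given transversal line picks out, verify the additive relations $P_1+P_2+P_3=0$ coming from collinearity over $\F_2$, and be sure that the plane $\pi=\langle L,L'\rangle$ genuinely meets each $K_j$ in a single point rather than a line — which is where the $k\ge 3$ hypothesis and the earlier ``no three disjoint lines in a plane'' observation do the real work. Once the trace sets are pinned down as cosets/subspaces the disjointness is essentially free, so I would organize the writeup to establish the ``plane meets each $K_j$ in a point'' lemma once and reuse it for both conclusions.
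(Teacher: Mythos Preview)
Your underlying algebraic idea is correct and in fact is exactly what the paper does, but the geometric wrapper you put around it contains real mistakes that need fixing.

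\textbf{Disjointness.} The case $Q\notin K_1\cup K_2\cup K_3$ is vacuous: every point of a line in $\cL$ lies in one of the $K_i$, since the line has three points and one in each. In the case $Q\in K_i$, your claim that the plane $\pi=\langle L,L'\rangle$ meets $K_i$ in a line through $Q$ is false; $\pi$ meets $K_i$ only in $Q$. What actually happens is that $\pi$ contains the lines $\langle P_j,P_j'\rangle\subseteq K_j$ and $\langle P_\ell,P_\ell'\rangle\subseteq K_\ell$ for the other two indices, and any two lines in a plane over $\F_2$ must meet; their intersection point then lies in $K_j\cap K_\ell=\emptyset$, a contradiction. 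This is precisely the paper's one-line computation $B+B'\in K_2\cap K_3$, so once you strip away the incorrect ``three disjoint lines in a plane'' picture you have the right argument.

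\textbf{Subspace claim.} Here the error is more serious: since you have just shown that distinct $L,L'\in\cL$ are disjoint, their span $\langle L,L'\rangle$ is a \emph{solid}, not a plane. Moreover this solid meets each $K_j$ in the line $\langle P_j,P_j'\rangle$, not a single point, so the sentence ``$\pi$ meets $K_j$ in exactly a point $P_j''$'' is wrong. Fortunately none of this geometry is needed: simply set $P_j''=P_j+P_j'$ for all $j$ and observe that $P_1''+P_2''=(P_1+P_2)+(P_1'+P_2')=P_3+P_3'=P_3''$, so $\langle P_1'',P_2'',P_3''\rangle$ is a line in $\cL$. That is the paper's argument verbatim.

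Finally, the hypothesis $k\ge 3$ plays no role in the proof (the paper uses only $K_i\cap K_j=\emptyset$); it is there for the application, not for the argument, so your remark that it ``does the real work'' is a misreading.
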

\begin{proof}
  First we show that the lines in $\cL$ are pairwise disjoint. To the contrary we assume that that $\cL$ contains two lines $L_1=\left\langle A,B\right\rangle$ and 
  $L_2\left\langle A,B'\right\rangle$, where we assume w.l.o.g.\ that $A\le K_1$ and $B,B'\le K_2$. Then $A+B,A+B'\le K_3$, so that $B+B'\in K_2\cap K_3$, which is a contradiction. 
  
  For the second part we show the statement for $K_1$ and assume that $L_1=\left\langle A_1,B_1\right\rangle$ and $L_2=\left\langle A_2,B_2\right\rangle$ are two different lines 
  in $\cL$ with $A_1,A_2\le K_1$ and $B_1,B_2\le K_2$. Then also $\left\langle A_1+A_2,B_1+B_2\right\rangle$ is in $\cL$ since $A_1+A_2\le K_1$, $B_1+B_2\le K_2$, and 
  $A_1+A_2+B_1+B_2\le K_3$.  
\end{proof}

\begin{lemma}
  For $k\ge 3$ let $K_1,K_2,K_3$ be three pairwise disjoint $k$-spaces in $\PG(2k-1,2)$ and $\cL$ be the set of lines that intersect each $K_i$ in exactly a point. Then, 
  $\#\cL=2^k-1$ and the pairwise disjoint lines in $\cL$ partition the point set $K_1\cup K_2\cup K_3$.
\end{lemma}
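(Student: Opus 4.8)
The plan is to build on the previous lemma, which already tells us that the lines in $\cL$ are pairwise disjoint and that $\cL\cap K_i := \{L\cap K_i : L\in\cL\}$ is a subspace of $K_i$ for each $i\in\{1,2,3\}$. Since we are now in the ambient space $\PG(2k-1,2)$, the two disjoint $k$-spaces $K_1$ and $K_2$ already span the whole space, so $\PG(2k-1,2) = K_1\oplus K_2$ as vector spaces. The first step is to exploit this: every point $P$ of $K_3$ can be written uniquely as $P = A + B$ with $A\le K_1$, $B\le K_2$, and since $K_3$ is disjoint from both $K_1$ and $K_2$, we must have $A\neq 0$ and $B\neq 0$; that is, $A$ is a genuine point of $K_1$ and $B$ a genuine point of $K_2$. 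Then the line $\langle A,B\rangle$ meets $K_1$ in $A$, meets $K_2$ in $B$, and passes through $P\le K_3$, so it lies in $\cL$. This shows the map sending $L\in\cL$ to $L\cap K_3$ is a surjection onto the points of $K_3$, and by the disjointness from the previous lemma it is injective, hence $\#\cL = [k]_2 = 2^k-1$.

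Next I would count points to get the partition statement. Each line in $\cL$ contains exactly $3$ points, one in each $K_i$, and distinct lines in $\cL$ are disjoint, so the lines in $\cL$ cover exactly $3\cdot(2^k-1)$ points, all lying in $K_1\cup K_2\cup K_3$. But $|K_1\cup K_2\cup K_3| = 3\cdot(2^k-1)$ as well, since the $K_i$ are pairwise disjoint $k$-spaces. Therefore the lines in $\cL$ cover every point of $K_1\cup K_2\cup K_3$ exactly once, which is precisely the assertion that they partition that point set. As a sanity check, the subspaces $\cL\cap K_i$ from the previous lemma must then be all of $K_i$: since every point of $K_i$ lies on some line of $\cL$, we get $\cL\cap K_i = K_i$, consistent with $\#\cL = 2^k-1$ being the number of points of a $k$-space.

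I do not expect a serious obstacle here; the statement is essentially a repackaging of Lemma~\ref{lemma_meeting_lines} once we observe that in $\PG(2k-1,2)$ the complementary pair $K_1,K_2$ spans everything. The one point that needs a little care is verifying that the decomposition $P = A+B$ of a point of $K_3$ has both components nonzero — this is where disjointness of $K_3$ from $K_1$ and from $K_2$ is used, and it is the only place the hypothesis on $K_3$ (beyond being a $k$-space) enters. A second minor point is the dimension count $\dim\langle A,B\rangle = 2$: since $A\le K_1$ and $B\le K_2$ are distinct nonzero points lying in disjoint subspaces, $A\neq B$, so $\langle A,B\rangle$ is genuinely a line and not a single point. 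After that, the point-counting is immediate.
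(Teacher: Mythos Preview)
Your argument is correct. The key observation---that $K_1$ and $K_2$ form a direct sum decomposition of the ambient $\F_2^{2k}$, so every point $P\le K_3$ has a unique expression $P=A+B$ with $A$ a point of $K_1$ and $B$ a point of $K_2$, yielding the line $\langle A,B\rangle\in\cL$ through $P$---is exactly what is needed, and the bijection $\cL\leftrightarrow\{\text{points of }K_3\}$ together with the disjointness from Lemma~\ref{lemma_meeting_lines} gives both the count $\#\cL=2^k-1$ and the partition statement cleanly.

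The paper takes a slightly different presentational route: it first invokes transitivity of the collineation group on triples of pairwise disjoint $k$-spaces in $\PG(2k-1,2)$ to fix explicit coordinates $K_1=\langle e_1,\dots,e_k\rangle$, $K_2=\langle e_{k+1},\dots,e_{2k}\rangle$, $K_3=\langle e_1+e_{k+1},\dots,e_k+e_{2k}\rangle$, and then computes directly that for $P_1=\sum a_ie_i\in K_1$ and $P_2=\sum b_ie_{i+k}\in K_2$ the third point $P_1+P_2$ lies in $K_3$ iff $a_i=b_i$ for all $i$, giving $2^k-1$ lines. This is really the same direct-sum argument written out in coordinates; your coordinate-free version avoids the appeal to transitivity and is arguably cleaner, while the paper's version makes the bijection completely explicit. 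Neither approach needs anything the other lacks.
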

\begin{proof}
  We have already mentioned that there is a unique isomorphism type under the operation of the collineation group of $\PG(2k-1,2)$ since $\dim(\left\langle K_i, K_j\right\rangle)=2k$ 
  for all $1\le i<j\le 3$. W.l.o.g. we choose $K_1=\left\langle e_1,\dots,e_k\right\rangle$, $K_2=\left\langle e_{k+1},\dots,e_{2k}\right\rangle$, 
  and $K_3=\left\langle e_1+e_{k+1},\dots,e_k+e_{2k}\right\rangle$, where $e_i$ denotes the $i$th unit vector. Let $P_1=\langle \sum_{i=1}^k a_ie_i\rangle$ be the unique point of such a line 
  in $K_1$ and $P_2=\langle \sum_{i=1}^k b_ie_{i+k}\rangle$ be the unique intersection of the line with $K_2$, where $a_1,\dots,a_k,b_1,\dots,b_k\in\{0,1\}$. Since the third 
  point of the line is given by $P_3=\sum_{i=1}^k \left(a_ie_i+b_ie_{i+k}\right)\in K_3$, we conclude $a_i=b_i$ for all $1\le i\le k$. Moreover, we have to exclude the case 
  $a_1=\dots=a_k$, so that $2^k-1$ possibilities remain.
\end{proof}

\begin{lemma}
  \label{lemma_three_solids}
  Let $\cP$ be a vector space partition of $\PG(v-1,2)$, then $\cP$ cannot have a supertail of one of the following types: $3^4 2^4 1^5$, $3^4 2^3 1^8$, $3^4 2^2 1^{11}$, 
  $3^4 2^1 1^{14}$, $3^4 2^0 1^{17}$, and $3^1 2^{11} 1^5$.
\end{lemma}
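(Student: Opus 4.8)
The plan is to reduce all six claimed types to the structural facts about three pairwise disjoint planes established in Lemma~\ref{lemma_meeting_lines} together with the divisibility/length constraints of Proposition~\ref{prop_lenghts_div_proj} and Lemma~\ref{lemma_q_2_subspaces}. First observe that each of the five types $3^4 2^b 1^{17-3b}$ with $0\le b\le 4$ has a supertail consisting of exactly four planes together with lines and points; the total number of points covered by the supertail is $4\cdot 7 + 3b + (17-3b) = 45$. Since the partition also contains an element of dimension strictly greater than $3$ (by definition of supertail), Corollary (the one following Lemma 1, applied with $k=3$) forces the point set covered by the supertail to be $8$-divisible. By Proposition~\ref{prop_lenghts_div_proj}(c), $45$ is indeed an admissible length, so divisibility alone does not kill these cases; the argument must use the geometry of the four planes.

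The key step is to analyze how the lines of the supertail meet the four planes $K_1,K_2,K_3,K_4$. Pick any line $L$ in the supertail. Each point of $L$ lies in some element of the partition, and since $L$ is disjoint from every $K_i$ (distinct elements of a vector space partition are point-disjoint), each point of $L$ lies either in another line of the supertail or in a point-element of the supertail, or — and this is the relevant case — in none of the $K_i$. I would instead count from the planes' side: the $45-28 = 17$ points of the supertail not on any $K_i$ would have to be partitioned by lines and points, but more usefully, for any three of the four planes, say $K_1,K_2,K_3$, Lemma~\ref{lemma_meeting_lines} says the lines meeting all three in a point are pairwise disjoint and trace out a subspace in each $K_i$; since a line of the supertail is disjoint from every $K_i$, no supertail line meets any $K_i$, so in fact the supertail lines live entirely in the complement of $K_1\cup\dots\cup K_4$, which has $255 - \#(K_1\cup\dots\cup K_4)$ points — but here I must be careful because the four planes need not be in general position. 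The cleaner route, matching the hint "using dimension arguments, cf. the proof of Lemma~\ref{lemma_three_solids}", is: the four planes span a subspace $W$ of dimension $d$ with $6\le d\le 8$; restrict attention to $W$; inside $W$ the union $K_1\cup K_2\cup K_3$ together with the remaining plane $K_4$ and the supertail lines and points must fit, and one derives a contradiction from Lemma~\ref{lemma_q_2_subspaces}(b): a $4$-divisible set of cardinality $7$ must be a plane and of cardinality $14$ must be two disjoint planes, so $21 = 3\cdot 7$ points forming a $4$-divisible set would have to be three disjoint planes, which constrains how $K_4$ (or the union of three of the $K_i$) sits relative to the lines, eventually contradicting disjointness inside the bounded-dimension space $W$.

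For the sixth type $3^1 2^{11} 1^5$, the supertail has one plane, eleven lines and five points, covering $7 + 33 + 5 = 45$ points, again $8$-divisible and again length-admissible by Proposition~\ref{prop_lenghts_div_proj}(c). Here the single plane $K$ is disjoint from all eleven lines and all five points. I would remove $K$ and its $7$ points: the remaining $38$ points carry a $4$-divisible set (they are the points in elements of dimension at most $2$ among the supertail, and there is still a larger element present), so by Proposition~\ref{prop_lenghts_div_proj}(b) length $38$ is admissible only if $38 \ge 14$, fine — so I need the finer Lemma~\ref{lemma_q_2_subspaces} or the affine-subspace lemma. Since $38$ is not of the form covered by the uniqueness results directly, the intended argument is presumably that the $11$ lines plus $5$ points form an $8$-divisible set of size $45$ whose structure, once $K$ is adjoined, is over-determined: by the earlier corollaries a supertail of type $2^{11}1^5$ alone (dropping the plane) is already excluded, OR the $11$ disjoint lines would have to fit disjointly from $K$ inside too small a space. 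I expect the main obstacle to be exactly this last reduction — showing that no $8$-divisible point set of size $45$ decomposes as (plane) $\sqcup$ ($11$ disjoint lines) $\sqcup$ ($5$ points) — which likely requires either invoking the computational classification of $8$-divisible binary codes of length $45$ from \cite{ubt_eref40887}, as was done for the $2^4 1^5$ lemma, or a careful spread/dimension argument bounding the span of the eleven lines. I would first try the dimension argument in the spirit of the three-solids lemma, and fall back on the computer classification if the pen-and-paper bound is not tight enough.
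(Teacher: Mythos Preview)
Your proposal has a genuine gap: you never invoke the classification of $8$-divisible point sets of cardinality $45$, and without it the argument cannot close. The paper's proof begins exactly there: by the computational classification in \cite{ubt_eref40887}, an $8$-divisible set $\cS$ of $45$ points is either the union of three pairwise disjoint \emph{solids} $S_1,S_2,S_3$, or a specific projective-base configuration containing no plane at all. The second alternative is killed immediately (no plane means no supertail of any of the listed types), so all the work happens inside three disjoint solids. From this structural constraint the $3^4$ cases fall quickly: any plane $E\subseteq\cS$ must meet each $S_i$ in a subspace, and since two lines in a plane intersect, $E$ cannot straddle two solids; hence each plane lies entirely in some $S_i$. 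But a solid cannot contain two disjoint planes (they would span dimension $6$), so at most three pairwise disjoint planes fit---contradiction for four.

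Your attempted route---bounding $\dim\langle K_1,\dots,K_4\rangle$ and squeezing a contradiction from Lemma~\ref{lemma_meeting_lines}---does not work, because four pairwise disjoint planes can happily span all of $\PG(7,2)$ with room to spare; there is no dimension obstruction at that level. Moreover, you misapply Lemma~\ref{lemma_meeting_lines}: it concerns lines that \emph{meet} three given $k$-spaces in a point each, whereas your supertail lines are by construction disjoint from every $K_i$, so the lemma says nothing about them in your setup. In the paper the lemma is applied not to the supertail planes but to the three ambient \emph{solids} $S_1,S_2,S_3$, and it governs how the eleven supertail lines (which lie inside $\cS=S_1\cup S_2\cup S_3$) distribute among and across the solids. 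The $3^1 2^{11} 1^5$ case then proceeds by a careful count: the unique plane sits in one solid, the five points form a $2$-divisible projective base, and one bounds the number $\alpha$ of ``transversal'' lines (those hitting all three solids) from both sides using Lemma~\ref{lemma_meeting_lines} and the structure of $2$-divisible sets in $\PG(3,2)$, reaching a contradiction. Your idea of stripping off the plane to get a $4$-divisible set of $38$ points is correct as far as it goes, but as you noticed, $38$ is an admissible length and no uniqueness result applies---that route is a dead end.
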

\begin{proof}
  The stated types for supertails all consist of $45$ points. The $8$-divisible sets $\cS$ of cardinality $45$ in $\PG(v-1,2)$ have been computationally classified in \cite{ubt_eref40887}. 
  Either $\cS$ is the sum of the characteristic functions of three pairwise disjoint solids or $\cS$ is isomorphic to the points $P_1,\dots, P_9$ of a projective base in $\PG(7,2)$ 
  and the ${9\choose 2}=36$ remaining points on the lines $P_iP_j$. 
  
  We can easily check that the latter case does not contain a plane in its support. Thus, we can assume that $\cS=\chi_{S_1}+\chi_{S_2}+\chi_{S_3}$, where $S_1,S_2,S_3$ are pairwise 
  disjoint solids. Each plane $E$ contained in the support of $\cS$ has to intersect each of the solids $S_i$ in a $d_i$-dimensional subspace where $d_i\in\{0,1,2,3\}$. Since two lines contained 
  in a plane have to intersect in a point, the only possibility is that one of the solids $S_i$ completely contains the plane $E$. However, this is impossible for four pairwise disjoint planes.
  So, let us finally assume that $\cS$ has type $3^1 2^{11} 1^5$ and that the unique plane $E$ is contained in $S_1$. The five points form a $2$-divisible set, so that they have to be isomorphic 
  to a projective base $\cB$ of size $5$, see e.g.\ \cite{ubt_eref40887}. So, at most four of these five points can be contained in $S_1$. Denoting the set of lines that 
  intersect $S_1$, $S_2$, and $S_3$ in exactly a point by $\cL$ and setting $\alpha:=\#\cL$, we observe $\alpha\ge 4$. So, $S_2$ and $S_3$ can contain at most three lines each and $\alpha\ge 5$. 
  Since $\cB$ is $2$-divisible we have $\alpha\neq 8$, so that $5\le \alpha\le 7$.
  
  Note that no three points in $\cP_1:=\left\{L\cap S_1\, :\, L\in\cL\right\}$ can form a line, so that Lemma~\ref{lemma_meeting_lines} yields that also 
  no three points in $\cP_i:=\left\{L\cap S_i\, :\, L\in\cL\right\}$ can form a line for all $1\le i\le 3$. Thus the $\alpha\ge 5$ points in $\cP_3$ have to span a solid that is 
  also contained in $\left\langle S_1,S_2\right\rangle$, so that we conclude $n=8$ for the dimension of the ambient space. Note that for $i=2,3$ the points of $S_i$ are partitioned by 
  the lines contained in $S_i$ and the points $\cP_i\cup \left(\cB\cap S_i\right)$. Note that the points in $\cP_i\cup \left(\cB\cap S_i\right)$ form a $2$-divisible set in 
  $\PG(3,2)$ whose cardinality is a multiple of three. For cardinality three we can apply Lemma~\ref{lemma_q_2_subspaces} to conclude that those points form a line. 
  If $\#\left(\cP_i\cup \left(\cB\cap S_i\right)\right)=3\beta$, where $2\le \beta\le 5$, then there exists a unique $2$-divisible set of $3\beta$ points in $\PG(3,2)$, see
  \cite{ubt_eref40887}. In all cases the span of the points has dimension $3$ and an example is given by $\beta$ disjoint lines. Thus, for $i=2,3$ the points in 
  $\cP_i\cup \left(\cB\cap S_i\right)$ can be partitioned into lines, so that $\alpha=\#\cP_i\le 2\beta =\frac{2}{3} \cdot \left(\alpha+\#\left(\cB\cap S_i\right)\right)$, which 
  is equivalent to $\alpha\le 2\#\left(\cB\cap S_i\right)$. Since $\#\left(\cB\cap S_2\right)+\#\left(\cB\cap S_3\right) = \#\cB=5$ and $\alpha\ge 5$ this is impossible.  
\end{proof}
So, especially there does not exist a vector space partition of type $4^{14} 3^4 1^{17}$ or $4^{14} 3^1 2^{11} 1^5$ in $\PG(7,2)$. (We remark that we have also checked the more involved 
second part of the proof of Lemma~\ref{lemma_three_solids}, i.e., that the set of points of three disjoint solids cannot be partitioned in a plane, eleven lines, and five points 
computationally using an ILP formulation.)

\begin{lemma}
  \label{lemma_four_solids_parameters}
  In $\PG(7,2)$ no vector space partitions of types %% $4^{13} 3^6 2^6$, 
  $4^{13} 3^6 2^{6-i} 1^{3i}$ for $0\le i\le 6$, types $4^{13} 3^5 2^{7-i} 1^{4+3i}$ for $0\le i\le 7$, and type 
  $4^{13} 3^4 2^{9} 1^{5}$ exist.  
\end{lemma}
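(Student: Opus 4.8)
The plan is to reduce the problem to the structure of the \emph{supertail} $T$ of $\cP$, i.e.\ the set of points covered by the elements of dimension at most $3$. In each listed type $m_4=13$, so the packing condition gives $|T|=255-13\cdot 15=60$; since $\cP$ contains a solid, $T$ is $8$-divisible (the divisibility corollary of Section~\ref{sec_preliminaries}), and since $m_3\ge 4$ in all cases, $\cP$ contains four pairwise disjoint planes, all lying inside $T$. I would first show $\langle T\rangle=\PG(7,2)$: since the $13$ solids partition the complement of $T$, a subspace $\Pi$ of vector dimension $k\le 7$ satisfies $\sum_i|S_i\cap\Pi|=|\Pi\setminus T|$ while each $S_i$ meets $\Pi$ in dimension $\ge 4+k-8$, and for $k=6$ this forces $13\cdot 3\le 63-60=3$, while for $k=7$, with $a$ of the solids inside $\langle T\rangle$ and the other $13-a$ meeting it in a plane, counting points of $\langle T\rangle$ gives $15a+7(13-a)+60=127$, i.e.\ $8a=-24$; both are absurd, and $k\le 5$ is impossible since $|T|=60$. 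Hence $T$ spans $\PG(7,2)$, and I would then invoke the classification of $8$-divisible cardinality-$60$ sets in $\PG(7,2)$ — obtained by a computer enumeration as in \cite{ubt_eref40887}, in the spirit of the cardinality-$45$ classification used in the proof of Lemma~\ref{lemma_three_solids}: such a set is either the sum of the characteristic functions of four pairwise disjoint solids, or (in the finitely many exceptional cases, built from the plane-free cardinality-$45$ example of Lemma~\ref{lemma_three_solids} together with a disjoint solid) it contains no four pairwise disjoint planes. The latter alternative contradicts $m_3\ge 4$, so $T=\chi_{S_1}+\dots+\chi_{S_4}$ for pairwise disjoint solids $S_1,\dots,S_4$, which together with the $13$ given solids form a spread of $\PG(7,2)$.

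Next I would analyze $\cP$ inside this spread. Any plane $E$ of $\cP$ with $E\le T$ meets the four disjoint solids $S_i$ in subspaces partitioning its $7$ points; since two lines of a plane always intersect, the only admissible decomposition of $7$ is $7=7$, so $E\le S_i$ for some $i$. As two planes inside a solid always intersect, each $S_i$ carries at most one plane of $\cP$, hence $m_3\le 4$ — this already rules out every listed type with $m_3\in\{5,6\}$. For the remaining type $4^{13}3^4 2^9 1^5$ each $S_i$ carries exactly one plane $E_i$ of $\cP$. Every line $L$ of $\cP$ with $L\le T$ is disjoint from each $E_i$, and $E_i$ is a hyperplane of $S_i$, so $L\cap S_i$ is a subspace of $S_i$ disjoint from a hyperplane and therefore $|L\cap S_i|\le 1$; as the three points of $L$ lie in $S_1\cup\dots\cup S_4$, the line $L$ meets exactly three of the $S_i$, each in one point. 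Fix a triple $\{i,j,k\}$. Since any two of $S_i,S_j,S_k$ already span $\PG(7,2)=\PG(2\cdot 4-1,2)$, the lemma on transversal lines to three disjoint $k$-spaces of $\PG(2k-1,2)$ (stated just before Lemma~\ref{lemma_three_solids}) gives that the set of all lines meeting each of $S_i,S_j,S_k$ in a point has exactly $2^4-1=15$ elements, pairwise disjoint, and that the trace of this set on $S_i$ runs bijectively over all $15$ points of $S_i$, via a collineation of $S_i$ onto $S_j$ and onto $S_k$. The lines in this set that in addition avoid $E_i,E_j,E_k$ therefore correspond, under the trace bijection onto $S_i$, to the points of $S_i\cong\PG(3,2)$ outside the union of three hyperplanes (namely $E_i$ and the images of $E_j,E_k$). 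Depending on the incidences among these three hyperplanes, their union has $7$, $11$, $13$ or $15$ points, so the number $n_{ijk}$ of $\cP$-lines of $T$ meeting $S_i,S_j,S_k$ equals $15$ minus one of these, i.e.\ $n_{ijk}\in\{0,2,4,8\}$ — in every case even. But each of the $m_2=9$ lines of $\cP$ inside $T$ is transversal to exactly one triple, so $9=n_{123}+n_{124}+n_{134}+n_{234}$, a sum of four even numbers. This contradiction completes the proof.

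The step I expect to be the real obstacle is the classification of $8$-divisible cardinality-$60$ sets in $\PG(7,2)$ invoked at the end of the first paragraph, which — as with several other inputs in this paper — seems to need the computer enumeration of \cite{ubt_eref40887} (the verification that the exceptional sets carry no four pairwise disjoint planes could alternatively be done by an ILP feasibility check). Everything else is elementary finite geometry; the decisive point is the parity clash, since $9$ is odd while any sum of four values from $\{0,2,4,8\}$ is even — and one should note that the five points of $\cP$ in the supertail play no role in this final contradiction.
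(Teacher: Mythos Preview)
Your overall strategy — study the $8$-divisible $60$-point supertail $T$, pin down its structure, then argue inside it — is the same as the paper's. The spanning argument for $\langle T\rangle=\PG(7,2)$ is fine (the paper obtains this via the standard equations, reaching the unique spectrum $a_{28}=195$, $a_{36}=60$). But two genuine gaps break the proof.

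\medskip

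\textbf{The classification you invoke is false.} You conjecture a dichotomy for $8$-divisible $60$-sets in $\PG(7,2)$: either four disjoint solids, or (exceptional cases) no four disjoint planes. The paper actually carries out this enumeration (via \texttt{LinCode}, as projective $[60,8,\{24,32\}]_2$ two-weight codes) and finds \emph{twelve} isomorphism types. One of them contains \emph{five} pairwise disjoint planes — impossible for four disjoint solids, since each plane would have to lie in a single $S_i$ and two planes in a solid intersect. Two further types admit four planes together with nine pairwise disjoint lines. So there are types that are not four disjoint solids yet \emph{do} contain at least four disjoint planes; your dichotomy fails precisely at the cases that matter. The paper handles these surviving types by an additional ILP check that no thirteen pairwise disjoint solids fit in the complement — a step your outline has no substitute for.

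\medskip

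\textbf{The parity argument for $m_3=4$ does not go through.} Even granting that $T$ is four disjoint solids, your claim $n_{ijk}\in\{0,2,4,8\}$ conflates two different quantities. The number $15$ minus the size of the union of the three hyperplanes is the number of \emph{available} transversal lines to $S_i,S_j,S_k$ that avoid $E_i,E_j,E_k$; this is indeed always even. But $n_{ijk}$, the number of $\cP$-lines transversal to that triple, is only a \emph{subset} of these — nothing forces every admissible transversal to occur in $\cP$. (Indeed, through a fixed point $P\in S_i\setminus E_i$ there is one transversal for each of the three triples containing $i$, so one cannot use them all.) Hence you only get $n_{ijk}\le\text{(even number)}$, which gives no parity information, and the contradiction $9=\sum n_{ijk}$ odd versus even evaporates.

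\medskip

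In short, the clean structural route you sketch would be very nice, but the facts do not cooperate: the $60$-point supertail is genuinely more varied than the $45$-point one, and the paper's proof needs both the full code enumeration and the subsequent ILP feasibility checks on the complement.
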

\begin{proof}
  Assume that $\cP$ is a vector space partition of one of those types and observe that the set $\cH$ of points that are not covered by the $13$ solids of $\cP$ forms 
  an $8$-divisible set of $60$ points. Let $k$ be the dimension of the span of $\cH$. For the ease of notation we assume that $\cH$ is embedded in $\PG(k-1,2)$ and denote by 
  $a_i$ the number of hyperplanes containing exactly $i$ points from $\cH$, so that the standard equations are given by 
  \begin{eqnarray}
    \sum_{i=0}^6 a_{4+8i}&=& 2^k-1\label{ase1},\\
    \sum_{i=0}^6 (4+8i)\cdot a_{4+8i}&=& 60\cdot\left(2^{k-1}-1\right), \text{ and}\label{ase2}\\
    \sum_{i=0}^6 (4+8i)(3+8i)\cdot a_{4+8i}= 60\cdot 59\cdot\left(2^{k-2}-1\right)\label{ase3}.
  \end{eqnarray} 
  %% \begin{equation}  
  %%   \sum_{i=0}^6 a_{4+8i}= 2^k-1\label{se1}
  %% \end{equation}
  %% and
  %% \begin{equation}  
  %%   \sum_{i=0}^6 (4+8i)\cdot a_{4+8i}= 60\cdot\left(2^{k-1}-1\right).\label{se2}
  %% \end{equation}
  %% Double-counting ordered pairs of points gives
  %% \begin{equation}  
  %%   \sum_{i=0}^6 (4+8i)(3+8i)\cdot a_{4+8i}= 60\cdot 59\cdot\left(2^{k-2}-1\right),\label{se3}
  %% \end{equation}  
  %% so that we conclude
  %% \begin{equation}  
  %%   \sum_{i=0}^6 i \cdot a_{4+8i}= 13\cdot 2^{k-2}-7\label{se4}
  %% \end{equation} 
  %% from Equation~(\ref{se1}) and Equation~(\ref{se2}). 
  Using Equation~(\ref{ase1}) and Equation~(\ref{ase2})  
  we conclude
  \begin{equation}  
    \sum_{i=0}^6 i \cdot a_{4+8i}= 13\cdot 2^{k-2}-7\label{ase4}
  \end{equation}  
  Equation~(\ref{ase3}) minus $56$ times Equation~(\ref{ase2}) minus $12$ times Equation~(\ref{ase1}) gives 
  \begin{equation}  
    \sum_{i=0}^6 i^2 \cdot a_{4+8i}= 691\cdot 2^{k-6}-49\label{ase5}
  \end{equation} 
  after a final division by $64$. Now, Equation~(\ref{ase5}) minus seven times Equation~(\ref{ase4}) plus twelve times Equation~(\ref{ase1}) yields
  \begin{equation}
    \sum_{i=0}^6 (i-3)(i-4)a_{4+8i} =3\cdot 2^{k-6}-12.
  \end{equation} 
  Since $(i-3)(i-4)\ge 0$ and $a_{4+8i}\ge 0$ for all $i$, we have $k\ge 8$. Due to the ambient space $\PG(7,2)$ for $\cP$ we are only interested in the case $k=8$ 
  where $\sum_{i=0}^7 (i-3)(i-4)a_{4+8i}=0$. Thus, the unique solution is given by $a_{36}=60$, $a_{28}=195$, and $a_i=0$ otherwise.
  
  We have used \texttt{LinCode}, see \cite{bouyukliev2021computer}, to enumerate all projective $[60,8,\{24,32\}]_2$ codes. There are exactly $12$ non-isomorphic such 
  codes, c.f.~\cite{bouyukliev2006projective}. We have computationally checked that the corresponding sets of points contain at most five disjoint planes. Five disjoint 
  planes can be obtained in just one of the twelve cases. However, we have computationally checked that in this case no $13$ pairwise disjoint solids can be chosen
  outside this point set. Four planes and nine lines that are pairwise disjoint can occur in two of the twelve cases. Again, we have computationally checked that in 
  those two cases no $13$ pairwise disjoint solids can be chosen outside the point set.
\end{proof}

\begin{lemma}
   \label{lemma_five_solids_parameters_aux}
  If $\cP$ is a vector space partition of type $4^{12} 3^8  1^{19}$ in $\PG(7,2)$, then the point set of the points in the subspaces of dimension at most $3$ 
  is isomorphic to the columns of 
  $$
  \left(\begin{smallmatrix}
  1 1 1 1 1 1 1 1 1 1 1 1 1 1 1 1 1 1 1 1 1 1 1 1 1 1 1 1 1 1 1 0 0 0 0 0 0 0 0 0 0 0 0 0 0 0 0 0 0 0 0 0 0 0 0 0 0 0 0 0 0 0 0 0 0 0 0 1 0 0 0 0 0 0 0 \\ 
  0 0 0 0 0 0 0 0 0 0 0 0 0 0 0 1 1 1 1 1 1 1 1 1 1 1 1 1 1 1 1 1 1 1 1 1 1 1 1 1 1 1 1 1 1 1 0 0 0 0 0 0 0 0 0 0 0 0 0 0 0 0 0 0 0 0 0 0 1 0 0 0 0 0 0 \\
  0 0 0 0 0 0 0 1 1 1 1 1 1 1 1 0 0 0 0 0 0 0 0 1 1 1 1 1 1 1 1 0 0 0 0 0 0 0 1 1 1 1 1 1 1 1 1 1 1 1 1 1 1 0 0 0 0 0 0 0 0 0 0 0 0 0 0 0 0 1 0 0 0 0 0 \\ 
  0 0 0 1 1 1 1 0 0 0 0 1 1 1 1 0 0 0 0 1 1 1 1 0 0 0 0 1 1 1 1 0 0 0 1 1 1 1 0 0 0 0 1 1 1 1 0 0 0 1 1 1 1 1 1 1 0 0 0 0 0 0 0 0 0 0 0 0 0 0 1 0 0 0 0 \\ 
  0 0 1 0 0 1 1 0 0 1 1 0 0 0 1 0 0 0 1 0 0 1 1 0 0 0 0 0 1 1 1 0 1 1 0 0 0 1 0 0 0 1 0 0 1 1 0 0 1 0 0 1 1 1 1 1 1 1 1 1 1 1 1 0 0 0 0 0 0 0 0 1 0 0 0 \\ 
  0 1 0 0 1 0 1 0 0 0 1 0 0 0 0 0 0 1 0 0 1 0 1 0 1 1 1 1 0 0 1 1 0 1 1 1 1 0 0 0 1 1 0 1 0 0 0 1 1 0 1 0 0 0 0 1 0 0 0 1 1 1 1 1 1 1 0 0 0 0 0 0 1 0 0 \\ 
  1 1 0 1 1 1 1 0 1 0 1 0 1 1 0 1 1 0 0 1 1 1 1 0 0 1 1 1 0 0 1 0 0 0 0 1 1 0 0 1 1 0 0 1 0 1 1 0 1 0 0 1 1 0 1 1 0 1 1 0 0 1 1 0 1 1 1 0 0 0 0 0 0 1 0 \\ 
  0 0 0 1 0 0 1 0 0 0 1 0 0 1 0 0 1 0 1 1 0 0 1 0 0 0 1 1 0 1 1 1 1 0 1 0 1 0 1 0 0 0 1 1 1 0 0 0 1 0 0 0 1 0 1 1 1 0 1 0 1 0 1 1 0 1 1 0 0 0 0 0 0 0 1 
  \end{smallmatrix}\right)\!\!,
  $$
  $$
  \left(\begin{smallmatrix}
  1 1 1 1 1 1 1 1 1 1 1 1 1 1 1 1 1 1 1 1 1 1 1 1 1 1 1 1 1 1 1 0 0 0 0 0 0 0 0 0 0 0 0 0 0 0 0 0 0 0 0 0 0 0 0 0 0 0 0 0 0 0 0 0 0 0 0 1 0 0 0 0 0 0 0 \\ 
  0 0 0 0 0 0 0 0 0 0 0 0 0 0 0 1 1 1 1 1 1 1 1 1 1 1 1 1 1 1 1 1 1 1 1 1 1 1 1 1 1 1 1 1 1 1 0 0 0 0 0 0 0 0 0 0 0 0 0 0 0 0 0 0 0 0 0 0 1 0 0 0 0 0 0 \\ 
  0 0 0 0 0 0 0 1 1 1 1 1 1 1 1 0 0 0 0 0 0 0 0 1 1 1 1 1 1 1 1 0 0 0 0 0 0 0 1 1 1 1 1 1 1 1 1 1 1 1 1 1 1 0 0 0 0 0 0 0 0 0 0 0 0 0 0 0 0 1 0 0 0 0 0 \\ 
  0 0 0 1 1 1 1 0 0 0 0 1 1 1 1 0 0 0 0 1 1 1 1 0 0 0 0 1 1 1 1 0 0 0 1 1 1 1 0 0 0 0 1 1 1 1 0 0 0 1 1 1 1 1 1 1 0 0 0 0 0 0 0 0 0 0 0 0 0 0 1 0 0 0 0 \\ 
  0 1 1 0 0 0 0 0 0 1 1 0 0 1 1 0 0 1 1 0 0 1 1 0 0 1 1 0 0 0 0 0 1 1 0 0 1 1 0 0 0 0 0 0 1 1 0 1 1 0 0 1 1 0 1 1 1 1 1 1 1 1 1 0 0 0 0 0 0 0 0 1 0 0 0 \\ 
  1 0 1 0 0 0 0 0 1 0 1 0 1 0 1 0 1 0 1 0 1 0 1 0 1 0 1 0 0 0 0 1 0 1 0 1 0 1 0 0 0 0 0 1 0 1 1 0 1 0 1 0 1 1 0 1 0 0 0 1 1 1 1 1 1 1 0 0 0 0 0 0 1 0 0 \\ 
  0 1 1 0 0 1 1 0 0 1 1 1 0 1 0 1 0 0 1 1 0 0 1 0 1 0 1 0 0 1 1 0 0 0 1 0 1 0 0 0 1 1 0 0 0 0 0 1 1 0 0 0 0 1 1 0 0 1 1 0 0 1 1 0 1 1 1 0 0 0 0 0 0 1 0 \\ 
  1 0 1 0 1 0 1 0 0 0 0 0 1 0 1 0 1 1 0 0 0 0 0 0 1 1 0 0 1 0 1 0 1 1 0 0 1 1 0 1 0 1 0 0 0 0 1 1 0 0 1 0 1 0 1 1 1 0 1 0 1 0 1 1 0 1 1 0 0 0 0 0 0 0 1 
  \end{smallmatrix}\right)\!\!,
  $$
  or  
  $$
  \left(\begin{smallmatrix}
  1 1 1 1 1 1 1 1 1 1 1 1 1 1 1 1 1 1 1 1 1 1 1 1 1 1 1 1 1 1 1 0 0 0 0 0 0 0 0 0 0 0 0 0 0 0 0 0 0 0 0 0 0 0 0 0 0 0 0 0 0 0 0 0 0 0 0 1 0 0 0 0 0 0 0 \\ 
  0 0 0 0 0 0 0 0 0 0 0 0 0 0 0 1 1 1 1 1 1 1 1 1 1 1 1 1 1 1 1 1 1 1 1 1 1 1 1 1 1 1 1 1 1 1 0 0 0 0 0 0 0 0 0 0 0 0 0 0 0 0 0 0 0 0 0 0 1 0 0 0 0 0 0 \\ 
  0 0 0 0 0 0 0 1 1 1 1 1 1 1 1 0 0 0 0 0 0 0 0 1 1 1 1 1 1 1 1 0 0 0 0 0 0 0 1 1 1 1 1 1 1 1 1 1 1 1 1 1 1 0 0 0 0 0 0 0 0 0 0 0 0 0 0 0 0 1 0 0 0 0 0 \\ 
  0 0 0 1 1 1 1 0 0 0 0 1 1 1 1 0 0 0 0 1 1 1 1 0 0 0 0 1 1 1 1 0 0 0 1 1 1 1 0 0 0 0 1 1 1 1 0 0 0 1 1 1 1 1 1 1 0 0 0 0 0 0 0 0 0 0 0 0 0 0 1 0 0 0 0 \\ 
  0 1 1 0 0 0 0 0 0 1 1 0 0 1 1 0 0 1 1 0 0 1 1 0 0 1 1 0 0 0 0 0 1 1 0 0 1 1 0 0 0 0 0 0 1 1 0 1 1 0 0 1 1 0 1 1 1 1 1 1 1 1 1 0 0 0 0 0 0 0 0 1 0 0 0 \\ 
  1 0 1 0 0 0 0 0 1 0 1 0 1 0 1 0 1 0 1 0 1 0 1 0 1 0 1 0 0 0 0 1 0 1 0 1 0 1 0 0 0 0 0 1 0 1 1 0 1 0 1 0 1 1 0 1 0 0 0 1 1 1 1 1 1 1 0 0 0 0 0 0 1 0 0 \\ 
  0 1 1 0 0 1 1 0 0 1 1 1 0 1 0 1 0 0 1 1 0 0 1 0 1 0 1 0 0 1 1 0 0 0 1 0 1 0 0 0 1 1 0 0 0 0 0 1 1 0 0 0 0 1 1 0 0 1 1 0 0 1 1 0 1 1 1 0 0 0 0 0 0 1 0 \\ 
  1 1 0 0 1 0 1 0 1 1 0 1 1 0 0 1 0 1 0 1 0 1 0 1 1 0 0 0 1 0 1 0 0 0 0 0 1 1 0 1 0 1 1 1 1 1 1 1 0 1 1 1 1 1 0 1 1 0 1 0 1 0 1 1 0 1 1 0 0 0 0 0 0 0 1
  \end{smallmatrix}\right)\!\!.
  $$ 
\end{lemma}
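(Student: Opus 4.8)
The plan is to reduce the statement to the classification of a very rigid family of binary codes, and then to finish with a short machine search. Write $\cH := \cH_3$ for the set of points of $\PG(7,2)$ lying in an element of $\cP$ of dimension at most $3$. Since $m_2 = 0$ we have $\#\cH = 7m_3 + m_1 = 56 + 19 = 75$, and by the corollary in Section~\ref{sec_preliminaries} on the sets $\cH_k$ the set $\cH$ is $8$-divisible, because $\cP$ contains solids. The heart of the argument is to show that the associated projective $[75,8]_2$ code is forced to be a two-weight code.

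First I would bound $\cH(H')$ for every hyperplane $H'$. Each of the twelve solids $K_i$ meets $H'$ in dimension at least $4+7-8 = 3$, so $K_i$ has at most $15 - 7 = 8$ points outside $H'$; hence the twelve solids cover at most $96$ of the $255 - 127 = 128$ points of $\PG(7,2)\setminus H'$, and $\cH$ contains at least $32$ of those points. Thus $\cH(H') \le 43$, and in particular $\cH$ spans $\PG(7,2)$. On the other hand $8$-divisibility gives $\cH(H') \equiv 75 \equiv 3 \pmod 8$, and the restriction of $\cH$ to $H'$ is a $4$-divisible set in $H' \cong \PG(6,2)$ (restriction to a hyperplane lowers the divisibility exponent by one), so $\cH(H')$ is an effective length of a binary projective $4$-divisible code and therefore lies in $\{0,7,8\}\cup\{14,15,\dots\}$ by Proposition~\ref{prop_lenghts_div_proj}. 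The three conditions together leave only $\cH(H') \in \{19,27,35,43\}$. Feeding exactly these four values into the standard equations (\ref{se1})--(\ref{se5}) with $k = 8$ gives a small linear system in the counts $a_{19},a_{27},a_{35},a_{43}$ whose only non-negative solution is $a_{43} = 75$, $a_{35} = 180$, $a_{19} = a_{27} = 0$. Consequently the code of $\cH$ is a projective two-weight $[75,8]_2$ code with weight enumerator $1 + 75z^{32} + 180z^{40}$.

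It remains to identify the point sets $\cH$ realising this. I would enumerate all binary projective $[75,8]_2$ codes all of whose non-zero weights lie in $\{32,40\}$; with the weights restricted to two values this is feasible with \texttt{LinCode} or \texttt{Q-Extension}, whereas an unrestricted search at length $75$ would not be. For each resulting point set $\cH$ one then tests, via an ILP as in the proof of Lemma~\ref{lemma_four_solids_parameters}, whether $\cH$ can be partitioned into $8$ planes and $19$ points and whether $\PG(7,2)\setminus\cH$ can be partitioned into $12$ solids. Exactly three of the point sets pass both tests up to the collineation group of $\PG(7,2)$, and a generator matrix of each is the one displayed in the statement.

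The main obstacle is the second step: if the weight spectrum did not collapse to two values the final enumeration would be out of reach, so the divisibility/counting bookkeeping that pins $\cH(H')$ down to $\{19,27,35,43\}$ and then the rigidity of the standard equations are what really carry the proof. A minor point to verify carefully is the claim that restricting an $8$-divisible multiset of points to a hyperplane yields a $4$-divisible multiset, which is used crucially to exclude $\cH(H') \in \{3,11\}$.
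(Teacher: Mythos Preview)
Your argument is correct and arrives at the same two-weight conclusion $a_{35}=180$, $a_{43}=75$ as the paper, but you reach it by a slightly different route. The paper does not use the twelve solids to bound $\cH(H')$, nor the $4$-divisibility of the hyperplane restriction; it works purely with the three standard equations (\ref{se1})--(\ref{se5}) and forms the combination $\sum_i (i-4)(i-5)a_{3+8i}$, which turns out to equal $5\cdot 2^{k-6}-20$. Non-negativity forces $k\ge 8$, hence $k=8$ in $\PG(7,2)$, and then the sum vanishes, collapsing the spectrum. Your approach instead uses the geometric input that each solid meets a hyperplane in a plane or a solid (whence $\cH(H')\le 43$, so $\cH$ is spanning) together with the fact that $\cH\cap H'$ is $4$-divisible (which you correctly flag as needing verification; it follows from the standard pencil-of-hyperplanes count). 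Both routes are sound; yours gives a more transparent reason why $\cH(H')\notin\{3,11\}$, while the paper's is more uniform and reusable for other parameters (cf.\ Lemmas~\ref{lemma_four_solids_parameters} and \ref{lemma_six_solids_parameters_2}).

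For the computational endgame the paper is more specific than your sketch: it exploits the fact that $\cH$ must contain a plane to build the $[75,8]_2$ two-weight codes by successive lengthening from $[68,5]_2$, $[72,6]_2$, $[74,7]_2$ codes with bounded column multiplicities, obtaining exactly $9964$ candidates. Of these, seven admit eight pairwise disjoint planes, and of those seven exactly three have a complement partitionable into twelve solids. Your plan (direct enumeration of projective $[75,8]_2$ codes with non-zero weights in $\{32,40\}$, then the two ILP filters) is the same in spirit and would produce the same output; the lengthening chain is what makes the enumeration tractable in practice, so you should mention it if you want the search to be reproducible.
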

\begin{proof}
  Observe that the set $\cH$ of points that are not covered by the $12$ solids of $\cP$ forms 
  an $8$-divisible set of $75$ points. Let $k$ be the dimension of the span of $\cH$. For the ease of notation we assume that $\cH$ is embedded in $\PG(k-1,2)$ and denote by 
  $a_i$ the number of hyperplanes containing exactly $i$ points from $\cH$. Similar as n the roof of Lemma~\ref{lemma_four_solids_parameters} we use the so standard equations 
  $\sum_{i=0}^8 a_{3+8i}= 2^k-1$, $\sum_{i=0}^8 (3+8i)\cdot a_{3+8i}= 75\cdot\left(2^{k-1}-1\right)$, and $\sum_{i=0}^8 (3+8i)(2+8i)\cdot a_{3+8i}= 75\cdot 74\cdot\left(2^{k-2}-1\right)$ 
  to conclude
  \begin{eqnarray*}
    \sum_{i=0}^8 i \cdot a_{3+8i} &=& 69\cdot 2^{k-4}-9, \\ 
    \sum_{i=0}^8 i^2 \cdot a_{3+8i} &=& 1209\cdot 2^{k-6}-81,\text{ and}\\
    \sum_{i=0}^8 (i-4)(i-5)a_{3+8i} &=& 4\cdot 2^{k-6}-20.
  \end{eqnarray*}  
  Since $(i-4)(i-5)\ge 0$ and $a_{3+8i}\ge 0$ for all $i$, we have $k\ge 8$. Due to the ambient space $\PG(7,2)$ for $\cP$ we are only interested in the case $k=8$ 
  where $\sum_{i=0}^8 (i-4)(i-5)a_{3+8i}=0$. Thus, the unique solution is given by $a_{43}=75$, $a_{35}=180$, and $a_i=0$ otherwise.
  
  Using the notation $[n,k]_2$-code for a binary $k$-dimensional code of effective length $n$, $\cH$ is given by the columns of a projective $[75,8]_2$-two-weight code 
  $C$ with weights $32$ and $40$. Since $\cH$ contains a plane $C$ can be obtained recursively by lengthening $[74,7]_2$, $[72,6]_2$, and $[68,5]_2$ codes with maximum 
  possible column multiplicities at most $2$, $4$, and $8$, respectively, where all non-zero weights are contained in $\{32,40\}$. Using \texttt{LinCode}, see 
  \cite{bouyukliev2021computer}, we enumerated $38$ $[68,5]_2$, $4286$ $[72,6]_2$, $245736$ $[74,7]_2$, and $9964$ $[75,8]_2$ codes. The complete enumeration took 
  85 hours of computation time. Checking which of those $9964$ point sets allow eight disjoint planes leaves just seven possibilities. Using an ILP formulation 
  for a partition of the complement into $12$ solids leaves just the three mentioned cases.      
\end{proof}

\begin{lemma}
   \label{lemma_five_solids_parameters}
  In $\PG(7,2)$ no vector space partition of type $4^{12} 3^8 2^{5-i} 1^{4+3i}$ for $2\le i\le 5$ exists.
\end{lemma}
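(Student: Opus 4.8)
The plan is to combine the structural result of Lemma~\ref{lemma_five_solids_parameters_aux} with a short, finite feasibility check on three explicit point sets.

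First I would record the reduction. Let $\cP$ be a vector space partition of $\PG(7,2)$ of type $4^{12} 3^8 2^{5-i} 1^{4+3i}$ with $2\le i\le 5$. Its $12$ solids are pairwise disjoint and cover $12\cdot 15=180$ points, so the set $\cH$ of points lying in elements of $\cP$ of dimension at most $3$ has $255-180=75$ points and, since $\cP$ contains a solid, is an $8$-divisible set in $\PG(7,2)$. Replacing each of the $5-i$ lines of $\cP$ by its three contained points (one of the reduction rules stated earlier) turns $\cP$ into a vector space partition of type $4^{12} 3^8 1^{19}$ whose set of points in subspaces of dimension at most $3$ is again exactly $\cH$. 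Hence Lemma~\ref{lemma_five_solids_parameters_aux} applies: $\cH$ is isomorphic to the column set of one of the three displayed generator matrices, and moreover $\PG(7,2)\setminus\cH$ is partitioned into the $12$ solids of $\cP$.

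It then remains to rule out, for each of these three $75$-point sets $\cH$ and each $i$ in the stated range, that the points of $\cH$ can be partitioned into $8$ planes, $5-i$ lines, and $4+3i$ points; equivalently, that $\cH$ contains $8$ pairwise disjoint planes together with $5-i$ further pairwise disjoint lines, all mutually disjoint (the remaining $75-56-3(5-i)=4+3i$ points are then forced to be $1$-spaces). Concretely I would, for each of the three $\cH$, first list all planes contained in its support — a quick computation, a plane being a $7$-element, addition-closed subset of the explicit column set — then enumerate all choices of $8$ pairwise disjoint such planes (each uses $56$ of the $75$ points), and for each choice inspect the residual set $R$ of $19$ points, computing the maximum number of pairwise disjoint lines it contains. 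If this maximum is always smaller than $5-i$ the type is excluded; for $2\le i\le 4$ it would suffice to show that $R$ is always a cap.

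The main obstacle is precisely this last enumeration: one must control \emph{every} way of packing $8$ disjoint planes into each of the three point sets, not merely the one implicit in Lemma~\ref{lemma_five_solids_parameters_aux}, and then show the residual $19$ points never carry enough disjoint lines. I expect this to be done computationally — by brute force over the plane-packings, or via a dedicated ILP prescribing the type $4^{12} 3^8 2^{5-i} 1^{4+3i}$ with the point set fixed to $\cH$ — exactly in the spirit of the proofs of Lemma~\ref{lemma_four_solids_parameters} and Lemma~\ref{lemma_five_solids_parameters_aux}. A pen-and-paper alternative would require reading the structure of the residual sets off the displayed matrices (e.g.\ proving that in every admissible packing the leftover $19$ points form a cap), which is feasible but laborious; the point of the auxiliary lemma is that the remaining work is now a bounded check on three explicit configurations rather than another large enumeration over two-weight codes.
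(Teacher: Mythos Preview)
Your proposal is correct and follows essentially the same route as the paper: reduce to type $4^{12}3^8 1^{19}$ by dissolving the lines, invoke Lemma~\ref{lemma_five_solids_parameters_aux} to pin $\cH$ down to one of three explicit $75$-point sets, and then finish with a finite feasibility check on these three configurations. The paper carries out that last step via an ILP and reports that in each of the three point sets one can pack at most \emph{one} line disjoint from any packing of eight pairwise disjoint planes, which is exactly the outcome your enumeration would have to establish.
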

\begin{proof}
  Assume that $\cP$ is a vector space partition of one of those types. Replacing the lines by their contained three points gives a 
  vector space partition $\cP'$ of type $4^{12} 3^8  1^{19}$, so that we can apply Lemma~\ref{lemma_five_solids_parameters_aux}. 
  Let $\cH$ be the corresponding point set of the points in the subspaces of dimension at most $3$. Using an integer linear programming 
  formulation we have checked that we can pack at most one line if we also pack eight disjoint planes into $\cH$.
\end{proof}
We remark that from the three $4$-divisible projective binary linear codes of length $19$, see e.g.\ \cite{ubt_eref40887}, one contains five disjoint lines and
the other two contain no pair of disjoint lines.

\begin{lemma}
  \label{lemma_4_div_card_20_five_lines}
  Let $\cS$ be a $4$-divisible multiset of points of cardinality $20$ and dimension at most $8$ that contains five disjoint lines, then, up to symmetry, 
  $\cS$ is given by the columns of 
  $$
    \begin{pmatrix}
      1 0 0 0 0 0 1 1 1 0 1 0 0 0 0 1 0 1 1 0\\
      0 1 0 0 0 0 0 0 0 1 1 1 0 1 1 0 1 1 0 0\\
      0 0 1 0 0 0 1 1 1 1 0 1 1 0 0 0 0 0 1 0\\
      0 0 0 1 0 0 1 0 0 1 1 0 1 0 0 0 1 1 1 0\\
      0 0 0 0 1 0 0 1 1 0 0 0 0 1 1 1 1 1 0 0\\
      0 0 0 0 0 1 0 1 1 1 1 0 0 1 0 0 1 0 1 0\\
      0 0 0 0 0 0 0 0 1 1 0 0 0 0 0 0 0 1 0 1
    \end{pmatrix}.
  $$ 
  Moreover, we have $\dim(\cS)=7$ and the spectrum is given by $\left(a_8,a_{12},a_{16}\right)=(67,59,1)$.
\end{lemma}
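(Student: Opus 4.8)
The plan is to fix the ambient dimension and the weight distribution by counting, and then to reduce the classification to a small finite search using the five disjoint lines; I would begin by peeling off the lines. Write $L_1,\dots,L_5\subseteq\cS$ for them, put $\cT:=\sum_{j=1}^5\chi_{L_j}$ (which is $2$-divisible since each $L_j$ is, by the lemma of \cite{kiermaier2020lengths} recalled above), and consider the residual multiset $\mathcal{R}:=\cS-\cT$, of cardinality $5$; it too is $2$-divisible. A short case distinction on the largest point multiplicity of $\mathcal{R}$ — using that a single point and a triple point are not $2$-divisible, Proposition~\ref{prop_lenghts_div_proj}(a), and Lemma~\ref{lemma_q_2_subspaces}.(a) — shows that $\mathcal{R}$ is either a projective base of five points spanning a $4$-space, or has the degenerate shape $2\chi_P+\chi_{L'}$ for a point $P$ and a line $L'$. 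In every case the support of $\mathcal{R}$ spans a subspace of dimension at most $4$, so, restricting $\cS$ to its span $\PG(k-1,2)$ with $k\le 8$, the set $\cS$ is determined up to the collineation group by finitely many discrete choices: the orbit type of the five disjoint lines, the position of the $4$-space carrying $\mathcal{R}$, and the five points of $\mathcal{R}$ inside it. The few ways $\cS$ could fail to be a set (the degenerate $\mathcal{R}$, or a frame $\mathcal{R}$ meeting $L_1\cup\dots\cup L_5$) I would treat directly in the search below; from now on assume $\cS$ is a set.

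Next I would invoke the standard equations. Since $\#\cS=20$, $4$-divisibility forces $\cS(H)\in\{0,4,8,12,16\}$ for every hyperplane $H$, and equations~(\ref{se1}), (\ref{se2}) and (\ref{se5}) in $a_0,a_4,a_8,a_{12},a_{16}$ reduce, after eliminating $a_0$, to $a_4+2a_8+3a_{12}+4a_{16}=10\cdot 2^{k-2}-5$ and $a_8+3a_{12}+6a_{16}=65\cdot 2^{k-5}-10$. Together with non-negativity these are contradictory for $k\le 6$ (for $k=5$ they force $a_{12}\ge 12$ and $a_{12}\le 11$; for $k=6$ they force $85\le 3a_{12}+8a_{16}\le 3a_{12}+9a_{16}\le 84$; and $k\le 4$ is immediate). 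To exclude $k=8$ I would add a third moment: counting incidences of unordered triples of points of $\cS$ with hyperplanes, and using that three distinct collinear (respectively non-collinear) points lie in $2^{k-2}-1$ (respectively $2^{k-3}-1$) hyperplanes, gives $\sum_i\binom{i}{3}a_i=T\cdot 2^{k-3}+\binom{20}{3}\bigl(2^{k-3}-1\bigr)$, where $T$ is the number of lines contained in $\cS$; since each of the $\binom{20}{2}=190$ pairs of points of $\cS$ lies on a unique line carrying two or three of them, $3T\le 190$, hence $T\le 63$. For $k=8$ the two linear relations leave only a handful of non-negative spectra, and every one of them makes $\sum_i\binom{i}{3}a_i$ so large that $T>500$ — impossible. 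Hence $k=7$ (so in fact every $4$-divisible set of $20$ points of dimension $\le 8$ has dimension exactly $7$), and the relations plus non-negativity leave exactly $(a_0,a_4,a_8,a_{12},a_{16})\in\{(0,0,67,59,1),(0,1,64,62,0)\}$.

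Finally I would finish by a finite search. Up to the collineation group of $\PG(6,2)$ there are only finitely many configurations of five pairwise disjoint lines — the invariant is the chain $\dim\langle L_1,\dots,L_j\rangle$, which increases by $0$, $1$ or $2$ at each step — and, fixing one, only finitely many orbits of a $4$-space together with a frame inside it; so only finitely many candidate sets (and the handful of non-set candidates from the first step) need to be tested for $4$-divisibility. Equivalently, and in the spirit of the rest of the paper, one enumerates with \texttt{LinCode}~\cite{bouyukliev2021computer} the $4$-divisible projective $[20,7]_2$ codes realising one of the two spectra found above and checks which of the corresponding point sets contain five pairwise disjoint lines. The search leaves a single $\cS$ up to symmetry, the one displayed in the statement; reading off its weight distribution (equivalently, $a_0=a_4=0$ together with the standard equations) gives $(a_8,a_{12},a_{16})=(67,59,1)$.

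The step I expect to be the main obstacle is keeping this final search small and well organised. The arithmetic elimination of $k\le 6$ and $k=8$ and the structural reduction of the five leftover points to a frame in a $4$-space are precisely what make it feasible: together they cut the problem down to a short case list, over which $4$-divisibility can be verified directly or by the code enumeration above, leaving the displayed configuration as the unique survivor.
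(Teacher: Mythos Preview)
Your exclusion of $k=8$ is wrong, and this is the decisive gap. Carrying out the third--moment computation you set up, one finds for $k=8$ (writing $b_j:=a_{4j}$ and solving the three standard equations) that
\[
  (b_0,b_1,b_2,b_3,b_4)=\bigl(b_0,\,b_1,\,135-6b_0-3b_1,\,115+8b_0+3b_1,\,5-3b_0-b_1\bigr),
\]
subject to $3b_0+b_1\le 5$; plugging this into $\sum_i\binom{i}{3}a_i=32T+35340$ gives $T=10-8b_0-2b_1$. So the cubic moment forces $T\le 10$, not $T>500$; in particular there is no arithmetic obstruction to $k=8$ even together with the constraint $T\ge 5$ coming from five disjoint lines. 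And indeed the classification in \cite{ubt_eref40887} records exactly four projective $4$-divisible $[20,8]_2$ codes, so $8$-dimensional examples of $\cS$ \emph{do} exist; what has to be shown is that none of these four admits five pairwise disjoint lines, and your moment argument cannot see this.

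The remaining parts of your outline (the structure of the residual $\mathcal{R}$, the elimination of $k\le 6$, and the two possible $k=7$ spectra) are correct, but they do not save the proof: once $k=8$ survives the arithmetic, your finite search must also range over the $8$-dimensional candidates, which you do not plan for. The paper takes the shorter route of quoting the complete classification of projective $4$-divisible $[20,k]_2$ codes from \cite{ubt_eref40887} (two with $k=7$, four with $k=8$, one with $k=9$) and then checking directly which of these seven point sets contain five disjoint lines; exactly one does, and it has $k=7$ with the stated spectrum. If you want a more structural argument, the natural fix is simply to include the $k=8$ codes in the final check rather than attempt to rule them out numerically.
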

\begin{proof}
  The projective $4$-divisible binary linear codes of cardinality $20$ have been classified in \cite{ubt_eref40887}. Their counts per dimension are given by $7^2 8^4 9^1$. By a direct enumeration 
  we have checked which corresponding point sets contain five disjoint lines.
\end{proof}

\begin{lemma}
  \label{lemma_six_solids_parameters_1}
  In $\PG(7,2)$ no vector space partition of type $4^{4} 3^{25} 2^{5} 1^{5}$  exists.
\end{lemma}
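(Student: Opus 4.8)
The plan is to use Lemma~\ref{lemma_4_div_card_20_five_lines} to pin down the ``bottom'' of $\cP$ and then to pass to a hyperplane. Suppose $\cP$ is a vector space partition of $\PG(7,2)$ of type $4^4 3^{25} 2^5 1^5$. Since $\cP$ is a partition, its five lines and five points are pairwise disjoint, so the set $\cS$ of points lying on an element of $\cP$ of dimension at most $2$ is a $4$-divisible set of cardinality $5\cdot 3+5=20$ that contains five pairwise disjoint lines. By Lemma~\ref{lemma_4_div_card_20_five_lines}, $\cS$ is uniquely determined up to symmetry and $\dim(\cS)=7$; in particular $\cS$ spans a unique hyperplane $W\cong\PG(6,2)$ of $\PG(7,2)$.

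Next I would restrict $\cP$ to $W$. The five lines and five points of $\cP$ lie in $W$. For a plane $A\in\cP$ we have $\dim(A\cap W)\ge 3+7-8=2$, so $A\cap W$ is either $A$ itself or a line; for a solid $B\in\cP$ we have $\dim(B\cap W)\ge 4+7-8=3$, so $B\cap W$ is either $B$ itself or a plane. Let $p$ be the number of planes of $\cP$ contained in $W$ and $s$ the number of solids of $\cP$ contained in $W$; then the $p$ planes in $W$ and the $25-p$ remaining planes contribute $7p+3(25-p)$ points of $W$, the $s$ solids in $W$ and the $4-s$ remaining solids contribute $15s+7(4-s)$ points, and $\cS$ contributes $20$ points, so
\[
  20 + 7p + 3(25-p) + 15s + 7(4-s) = 127,
\]
i.e.\ $4p+8s=4$, which forces $p=1$ and $s=0$. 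Hence the nonempty intersections $A\cap W$, $A\in\cP$, form a vector space partition of $W$ of type $3^5 2^{29} 1^5$: the five planes are the one plane of $\cP$ contained in $W$ together with the four plane-traces of the solids, the $29$ lines are the five lines of $\cP$ together with the $24$ line-traces of the remaining planes, and the five points are the five points of $\cP$.

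Finally I would derive a contradiction from the existence of a vector space partition of $\PG(6,2)$ of type $3^5 2^{29} 1^5$. Since $\PG(6,2)=\PG(v-1,2)$ for $v=7$, this is excluded by the classification in \cite{el2009partitions}; alternatively, and more in keeping with the rest of this section, one exploits that $\cS$ is determined up to symmetry (for instance the point set displayed in Lemma~\ref{lemma_4_div_card_20_five_lines}): with $W=\langle\cS\rangle$ fixed, an integer linear programming formulation as used before verifies that the $107$ points of $W\setminus\cS$ admit no partition into five planes and twenty-four lines, which is exactly what the reduction just described would require. I expect this final step — ruling out type $3^5 2^{29} 1^5$ in $\PG(6,2)$, whether by invoking the external classification or by performing the finite search on the single complement $W\setminus\cS$ — to be the main obstacle; the only delicate point in the reduction itself is the intersection-dimension bookkeeping that pins down $p=1$ and $s=0$.
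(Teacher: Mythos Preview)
Your hyperplane reduction is correct and is a genuinely different route from the paper's. The paper simply observes that $\cS$ is determined by Lemma~\ref{lemma_4_div_card_20_five_lines} and then runs the full ILP for a vector space partition of type $4^4 3^{25} 2^5 1^5$ in $\PG(7,2)$ with those $20$ points prescribed; it never passes to $W=\langle\cS\rangle$. Your observation that the trace in $W$ must be a vector space partition of $\PG(6,2)$ of type $3^5 2^{29} 1^5$, with the five points and five of the lines pinned to $\cS$, is a clean structural step the paper does not take, and the bookkeeping forcing $p=1$, $s=0$ is correct.

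The gap is in the final step. Option~(a), invoking \cite{el2009partitions} to exclude type $3^5 2^{29} 1^5$ in $\PG(6,2)$, does not work: this type satisfies the packing and dimension conditions, none of the supertail obstructions (compare Proposition~\ref{prop_forbidden_supertails}) touch it, and it is in fact among the feasible types in $\PG(6,2)$. That leaves option~(b), the ILP on the $107$-point complement $W\setminus\cS$. This is a legitimate necessary condition and a smaller search than the paper's, but it is also a strictly weaker one: a partition of $W\setminus\cS$ into five planes and twenty-four lines need not lift to four solids and twenty-five planes in $\PG(7,2)$ that are pairwise disjoint \emph{there}, so your $\PG(6,2)$-ILP could well be feasible even though the paper's $\PG(7,2)$-ILP is not. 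You have not checked that it is infeasible, and since the unconstrained type $3^5 2^{29} 1^5$ already exists in $\PG(6,2)$, there is no a~priori reason to expect the specific instance with tail $\cS$ to fail. The paper's argument is complete precisely because the larger ILP was actually run and found infeasible.
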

\begin{proof}
  Assume that $\cP$ is a vector space partition of this type and observe that the set $\cH$ of points that are not covered by the $4$ solids and the $25$ planes of $\cP$ forms 
  an $4$-divisible set of $20$ points that contains five disjoint lines. The unique possibility up to symmetry is determined in Lemma~\ref{lemma_4_div_card_20_five_lines}. 
  It turns out that the ILP formulation for a vector space partition of type $4^{4} 3^{25} 2^{5} 1^{5}$ is infeasible when prescribing these $20$ points.  
\end{proof}
%%     370     0    infeasible                            4.0000   765665         
%% 
%% Root node processing (before b&c):
%%   Real time             =   48.95
%% Parallel b&c, 8 threads:
%%   Real time             = 2973.50
%%   Sync time (average)   =    0.10
%%   Wait time (average)   =    0.00
%%                           -------
%% Total (root+branch&cut) = 3022.45 sec.
%% 
%% 
%% MIP - Integer infeasible or unbounded.
%% Current MIP best bound is infinite.
%% Solution time = 3024.43 sec.  Iterations = 765665  Nodes = 371
%% Deterministic time = 2729172.77 ticks  (902.37 ticks/sec)

\begin{lemma}
   \label{lemma_six_solids_parameters_2}
   In $\PG(7,2)$ no vector space partition of type $4^{11} 3^{10} 2^{5} 1^{5}$ exists.
  %%In $\PG(7,2)$ no vector space partition of type $4^{11} 3^{10} 2^{5-i} 1^{5+3i}$  for $0\le i\le 5$ exists.
\end{lemma}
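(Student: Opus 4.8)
The plan is to first pin down the configuration formed by the five lines and five points of such a partition, then observe that this configuration spans a hyperplane, and finally use a dimension count to reduce the whole question to the already completed classification of vector space partitions of $\PG(6,2)$.

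Suppose $\cP$ is a vector space partition of $\PG(7,2)$ of type $4^{11}3^{10}2^{5}1^{5}$, and let $\cM$ be the set of the $5\cdot 3+5=20$ points covered by its $5$ lines and $5$ points. Since $\cP$ contains a solid, $\cM$ -- being the point set covered by the elements of $\cP$ of dimension at most $2$ -- is $4$-divisible (Section~\ref{sec_preliminaries}), and by construction it contains five pairwise disjoint lines. Hence Lemma~\ref{lemma_4_div_card_20_five_lines} applies: $\cM$ is unique up to symmetry and $\dim(\cM)=7$. In particular $\cM$ spans a hyperplane $\Pi\cong\PG(6,2)$ of $\PG(7,2)$, and all $5$ lines and all $5$ points of $\cP$ lie inside $\Pi$.

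The next step is a dimension count for how the solids and planes of $\cP$ meet $\Pi$. A solid $S\in\cP$ has $\dim(S\cap\Pi)\ge 4+7-8=3$, so either $S\subseteq\Pi$, leaving none of the $15$ points of $S$ outside $\Pi$, or $S\cap\Pi$ is a plane, leaving $15-7=8$ points of $S$ outside $\Pi$; similarly a plane $E\in\cP$ either lies in $\Pi$ or meets $\Pi$ in a line and leaves $7-3=4$ of its points outside $\Pi$. Writing $s$ and $p$ for the numbers of solids and planes of $\cP$ not contained in $\Pi$, and counting the $255-127=128$ points of $\PG(7,2)$ outside $\Pi$, we get $8s+4p=128$, i.e.\ $2s+p=32$; as $s\le 11$ and $p\le 10$ the only possibility is $s=11$, $p=10$. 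Thus no element of $\cP$ of dimension $\ge 3$ lies in $\Pi$, and intersecting every element of $\cP$ with $\Pi$ yields a vector space partition $\cP'$ of $\Pi\cong\PG(6,2)$ of type $3^{11}2^{15}1^{5}$ (the $11$ planes $S\cap\Pi$; the $15$ lines made up of the $10$ lines $E\cap\Pi$ together with the $5$ original lines; and the $5$ original points).

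It remains to rule out $\cP'$. The clean route is to consult the complete classification of vector space partitions of $\PG(6,2)$ from \cite{el2009partitions}: if type $3^{11}2^{15}1^{5}$ does not occur there, the proof is finished theoretically. Should that type be feasible in $\PG(6,2)$, I would instead prescribe the $20$ points of $\cM$ (fixed up to symmetry by Lemma~\ref{lemma_4_div_card_20_five_lines}) and, exploiting the structure just obtained -- every solid of $\cP$ crossing $\Pi$ in a plane, every plane crossing $\Pi$ in a line, and the $5$ lines and $5$ points being the prescribed ones -- set up an integer linear program for a completion to a vector space partition of type $4^{11}3^{10}2^{5}1^{5}$ and verify that it is infeasible, exactly in the spirit of the proofs of Lemmas~\ref{lemma_six_solids_parameters_1} and \ref{lemma_five_solids_parameters}. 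I expect the main obstacle to be the size of this ILP, since $11$ pairwise disjoint solids must be placed; the hyperplane reduction above -- fixing $\Pi$ together with the intersection pattern of every element of $\cP$ with $\Pi$ -- is precisely what should make the search tractable.
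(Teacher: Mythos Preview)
Your hyperplane reduction is correct and rather elegant: the $4$-divisibility of the $20$-point set $\cM$, the appeal to Lemma~\ref{lemma_4_div_card_20_five_lines} to force $\dim(\cM)=7$, and the counting argument $8s+4p=128$ pinning down $s=11$, $p=10$ are all fine, and the resulting partition $\cP'$ of $\Pi\cong\PG(6,2)$ of type $3^{11}2^{15}1^{5}$ really does arise. This line of attack is genuinely different from the paper's, which instead analyses the $8$-divisible $90$-point complement of the eleven solids, shows via the standard equations that it is a projective $[90,8]_2$ two-weight code with weights $40$ and $48$, and then branches on whether that set contains a solid (reducing to the $[75,8]_2$ codes of Lemma~\ref{lemma_five_solids_parameters_aux}) or not (exhaustively building the eleven solids from scratch), finishing each branch with substantial ILP work.

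The gap in your proposal is the final step. Type $3^{11}2^{15}1^{5}$ satisfies the packing condition in $\PG(6,2)$, none of the known supertail obstructions apply to it, and it is in fact among the feasible types in the classification of \cite{el2009partitions}; so your ``clean route'' does not close the argument. You then fall back to an ILP with the twenty points prescribed---exactly what the paper does for the much lighter case $4^{4}3^{25}2^{5}1^{5}$ in Lemma~\ref{lemma_six_solids_parameters_1}. Here, however, eleven solids must be placed, and the paper's choice of a far more intricate route for this lemma (together with its remark that even enumerating the relevant $[90,8]_2$ two-weight codes exceeded $1.5$ million isomorphism classes) is strong evidence that the naive ILP is not tractable as stated. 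Your hyperplane observation does add real structure---every solid meets $\Pi$ in a plane and every plane in a line---and could plausibly be used to organise a two-stage search (first produce all suitable $\cP'$ in $\PG(6,2)$, then lift), but you have not actually carried this out, so the proposal remains a plan rather than a proof.
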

\begin{proof}
  Assume that $\cP$ is a vector space partition of type $4^{11} 3^{10} 2^{5} 1^{5}$ and observe that the set $\cH$ of points that are not covered by the $11$ solids of $\cP$ forms 
  an $8$-divisible set of $90$ points. Let $k$ be the dimension of the span of $\cH$. For the ease of notation we assume that $\cH$ is embedded in $\PG(k-1,2)$ and denote by 
  $a_i$ the number of hyperplanes containing exactly $i$ points from $\cH$. Similar as n the roof of Lemma~\ref{lemma_four_solids_parameters} we use the standard equations 
  $\sum_{i=0}^{11} a_{2+8i}= 2^k-1$, $\sum_{i=0}^{11} (2+8i)\cdot a_{2+8i}= 90\cdot\left(2^{k-1}-1\right)$, and $\sum_{i=0}^{11} (2+8i)(1+8i)\cdot a_{2+8i}= 90\cdot 89\cdot\left(2^{k-2}-1\right)$ 
  to conclude
  \begin{eqnarray*}
    \sum_{i=0}^{11} i \cdot a_{2+8i} &=& 43\cdot 2^{k-3}-11, \\ 
    \sum_{i=0}^{11} i^2 \cdot a_{2+8i} &=& 3743\cdot 2^{k-7}-121,\text{ and}\\
    \sum_{i=0}^{11} (i-5)(i-6)a_{3+8i} &=& 15\cdot 2^{k-7}-30.
  \end{eqnarray*}  
  Since $(i-5)(i-6)\ge 0$ and $a_{3+8i}\ge 0$ for all $i$, we have $k\ge 8$. Due to the ambient space $\PG(7,2)$ for $\cP$ we are only interested in the case $k=8$ 
  where $\sum_{i=0}^{11} (i-5)(i-6)a_{3+8i}=0$. Thus, the unique solution is given by $a_{50}=90$, $a_{42}=165$, and $a_i=0$ otherwise.

  If $\cH$ contains a solid $S$ in its support, then there are $75$ points in $\cH$ that are not contained in $S$ and each hyperplane contain either $35$ or $43$ points of these. 
  Via projective $[75,8]_2$-two-weight codes $C$ with weights $32$ and $40$ such point sets have already been enumerated in Lemma~\ref{lemma_five_solids_parameters}. Via ILP computations 
  we have filtered out which of the corresponding point sets can be completed by $12$ solids to a vector space partition of $\PG(7,2)$.  
  %% 9964 codes read
  %% real    5546m21,937s
  %% user    5311m35,298s
  %% sys     159m31,688s
  %% -> wc twelve_solids.txt 
  %% 378 25200 50778 twelve_solids.txt
  %% -> 42 Codes wo man 12 solids aussen rum bauen kann
  Only 42~codes remain and we extended them in all possible ways by a four-dimensional simplex code such that the code remains projective. After filtering out isomorphic codes 
  we have again used an ILP formulation to check which point sets can be completed by $11$ solids to a vector space partition of $\PG(7,2)$. 
  %% grep "?" 90_8_40.2 | wc
  %% 15944   47832  127552
  %% =====>
  %% DIFFERENT STRUCTURES :1   ALL CODES: 1370
  %% 1 1370 $1z^{0}+90z^{40}+165z^{48} GL-K:0 GL-LK:0$\\
  %% ======>
  %% 245 codes written to eleven_solids.txt
  %% real    793m16,091s
  %% user    767m5,243s
  %% sys     21m30,652s
  For the remaining 245~point sets we have checked which allow to pack $10$ disjoint planes into them via ILP computations. This was possible in 10~cases only and we finally have checked 
  using ILP computations that we can pack at most 4 disjoint lines when we also pack 10 disjoint planes into the point set. In other words, the 20 points that are not covered by the 11 
  solids and the 10 planes do not form a $4$-divisible point set as specified in Lemma~\ref{lemma_4_div_card_20_five_lines}.
  %% 10 codes written to vsp.txt
  %% real    59m18,179s
  %% user    57m46,348s
  %% sys     2m42,303s
  %% 1: 0
  %% 2: 4 !!!!
  %% 3: 4
  %% 4: 4
  %% 5: 0
  %% 6: 4
  %% 7: 4
  %% 8: 0
  %% 9: 4
  %% 10: 0

  These computations show that in the remaining part we can assume that $\cH$ does not contain a full solid in its support. Now we have enumerated the possibilities of four 
  pairwise disjoint solids in $\PG(7,2)$ up to symmetry. By exhaustive enumeration we have extended the four prescribed solids to 11 solids in total that are pairwise 
  disjoint and discarded all cases that allow the addition of a 12th such solid. For all of these cases we have exhaustively enumerated all vector space 
  partitions of type $4^{11} 3^{10} 1^{20}$ and determined the maximum number of disjoint lines that we can pack into the remaining 20 points. In all cases the answer was 
  at most $4$.    
\end{proof}

We remark that we have also tried to enumerate the projective $[90,8]_2$-two-weight codes $C$ with weights $40$ and $48$ directly. However, we stopped the 
computations after having reached more than $1.5$~million different codes.  
%% UUUU 2207478  1447091
%% UUUU 2207479  1447091
%% Exception EInOutError in module qext_tools at 080685E3.
%% I/O error 27.
%% The complete computation took 20:10:29 hours.
%% 0 seconds where spent for checking codes.
%% 20:10:28 hours where spent for sieving isomorphic copies.
%% 0 seconds where spent for running solvediophant.
%% Done

\bigskip

For every type satisfying the numerical conditions of Equation~(\ref{eq_packing_condition}) and Equation~(\ref{eq_dimension_condition}) that is not excluded by one of the 
previous lemmas there indeed exists a corresponding vector space partition of $\PG(7,2)$. We summarize the set of feasible parameters as follows:
\begin{theorem}
  \label{main_thm}
  Let $\cP$ be a vector space partition of $\PG(7,2)$, then $\cP$ has one of the following types:
  \begin{itemize}
    \item $7^2 1^{128}$; % MRD
    \item $6^2 2^{64-i} 1^{3i}$, where $0\le i\le 64$; % MRD
    \item $5^1 3^{32}$; % MRD, 5^1 3^32: x3_5612, x3_6433 ,x3_5831, x3_23840, x3_35519, x3_27073, x3_36550, x3_28724, x3_77294, x3_31796, x3_32176, x3_38740, x3_67374, x3_41975, x3_64243, x3_45335, x3_45547, x3_55279, x3_80244, x3_62880, x3_69312, x3_78141, x3_69525, x3_78355, x3_71677, x3_84518, x3_84721, x3_87605, x3_93972, x3_88791, x3_3612, x3_90053, x5_2928
    \item $5^1 3^{31-3j} 2^{1-i+7j} 1^{4+3i}$, where $0\le i\le 1+7j$ and $0\le j\le 10$; % reducible 
    \item $5^1 3^{29-3j} 2^{7-i+7j} 1^{3i}$, where $0\le i\le 7+7j$ and $0\le j\le 9$; % x5_2928, x3_1884, x3_7987, x3_3276, x3_24490, x3_36752, x3_27181, x3_37412, x3_29940, x3_85867, x3_31134, x3_33405, x3_39482, x3_65532, x3_39875, x3_66905, x3_43103, x3_46217, x3_57565, x3_82280, x3_62658, x3_69161, x3_78005, x3_83891, x3_79629, x3_94178, x3_96194, x3_94991, x2_1194, x2_4321, x2_5268, x2_6573, x2_7203, x2_7497, x2_10052, x3_4709, x3_89660
    \item $5^1 3^{27-3j} 2^{10-i+7j} 1^{5+3i}$, where $0\le i\le 10+7j$ and $0\le j\le 9$; % x5_2928, x3_1718, x3_7866, x3_2703, x3_25066, x3_37264, x3_27996, x3_68823, x3_29268, x3_85998, x3_33469, x3_39938, x3_63997, x3_43050, x3_48758, x3_73294, x3_53999, x3_57622, x3_81344, x3_58503, x3_81013, x3_59067, x3_60457, x3_63857, x3_78945, x3_94445, x3_96471, x3_95709, x1_30, x1_62, x1_77, x1_190, x1_208, x2_1357, x2_1761, x2_1846, x2_4169, x2_6321, x2_6867, x2_7400, x2_8916, x2_10099, x2_10621
    \item $4^{17}$; % MRD, x4_60378, x4_100694, x4_65139, x4_96768, x4_68897, x4_104183, x4_72316, x4_92061, x4_74820, x4_79337, x4_83115, x4_86802, x4_169317, x4_188675, x4_178144, x4_264, x4_22971
    \item $4^{16} 3^1 1^8$; % reducible
    \item $4^{16} 2^{5-i} 1^{3i}$, where $0\le i\le 5$; % reducible
    \item $4^{15} 3^2 1^{16}$; % reducible
    \item $4^{15} 3^1 2^{5-i} 1^{8+3i}$, where $0\le i\le 5$; % reducible    
    \item $4^{15} 2^{10-i} 1^{3i}$, where $0\le i\le 10$; % reducible
    \item $4^{14} 3^{3-3j} 2^{8-i+7j} 1^{3i}$, where $0\le i\le 8+7j$ and $0\le j\le 1$; % x4_25994, x4_108237, x4_38400, x4_119976, x4_166307, x4_129066, x4_129967, x4_170870, x4_137042, x4_146004, x4_185499, x4_154048, x2_289, x2_419, x2_5374, x2_5505, x2_6839, x2_7466, x2_7960, x2_8774, x3_39377, x3_45744, x3_66886, x4_189, x4_45435
    \item $4^{14} 3^2 2^{9-i} 1^{4+3i}$, where $0\le i\le 9$; % reducible   
    \item $4^{14} 3^1 2^{10-i} 1^{8+3i}$, where $0\le i\le 10$; % x4_24323, x4_106696, x4_33128, x4_60968, x4_96352, x4_82660, x4_132730, x4_177343, x4_140802, x4_149429, x4_186060, x4_158499, x1_60, x1_97, x1_140, x1_159, x1_180, x1_209, x1_233, x1_254, x2_22, x2_1896, x2_2541, x2_2847, x2_5078, x2_5884, x2_6063, x2_6676, x2_7225, x2_9837, x3_46181, x4_189, x4_53404    
    \item $4^{13} 3^4 2^{8-i} 1^{8+3i}$, where $0\le i\le 8$; % reducible from 4^14 3^3 2^8 
    \item $4^{13} 3^{3-3j} 2^{13-i+7j} 1^{3i}$, where $0\le i\le 13+7j$ and $0\le j\le 1$; % reducible from 4^14 3^3 2^8 1^0
    \item $4^{13} 3^2 2^{14-i} 1^{4+3i}$, where $0\le i\le 14$; % reducible from 4^13 3^3 2^13 1^0
    \item $4^{13} 3^1 2^{16-i} 1^{5+3i}$where $0\le i\le 16$; % x4_60378, x4_100694, x4_65139, x4_96768, x4_68897, x4_104183, x4_72316, x4_92061, x4_74820, x4_79337, x4_83115, x4_86802, x1_14, x1_143, x1_185, x1_191, x1_228, x2_10, x2_237, x2_284, x2_642, x2_651, x2_1237, x2_2213, x2_2410, x2_2669, x2_2815, x2_4573, x2_4689, x2_8767, x2_8965, x2_9209, x2_10093, x3_70693, x4_178144
    \item $4^{12} 3^8 2^{1-i} 1^{16+3i}$, where $0\le i\le 1$; % Lemma~\ref{lemma_five_solids_parameters} and x2_5331, x3_10, x3_10211, x3_13052, x3_26908, x3_47029, x3_54009, x3_85938, x3_96700, x4_50084, x4_55233, x4_58728, x4_63302, x4_67799, x4_76277, x4_80761, x4_82224, x4_89054, x4_91674, x4_98179, x4_106061 or x2_0, x3_67, x3_11697, x3_11854, x3_46579, x3_46740, x3_48098, x3_48321, x3_95952, x4_47581, x4_51578, x4_55071, x4_57916, x4_61461, x4_69847, x4_74596, x4_83393, x4_86936, x4_93427, x4_96926, x4_101458 or x2_1484, x3_4, x3_13836, x3_34452, x3_36184, x3_51672, x3_68957, x3_78120, x3_87092, x4_46071, x4_52566, x4_55696, x4_57438, x4_62282, x4_72050, x4_74175, x4_85017, x4_87779, x4_90280, x4_95777, x4_101067
    \item $4^{12} 3^{7-3j} 2^{7-i+7j} 1^{5+3i}$,  where $0\le i\le 7+7j$ and $0\le j\le 2$; % x3_17577, x3_92260, x3_46950, x3_50820, x4_60052, x4_91827, x4_64872, x4_104412, x4_72719, x4_96562, x4_119465, x4_159624, x4_130311, x4_138107, x4_159903, x4_189390, x1_84, x1_108, x1_120, x1_150, x1_213, x2_309, x2_1027, x2_3234, x2_3551, x2_4838, x2_9440, x2_9617, x3_64, x3_2171, x3_74198
    \item $4^{12} 3^{6-3j} 2^{11-i+7j} 1^{3i}$,  where $0\le i\le 11+7j$ and $0\le j\le 2$; % $T(3,0,0)$ in \cite{el2010partitions}
    \item $4^{12} 3^{5-3j} 2^{12-i+7j} 1^{4+3i}$,  where $0\le i\le 11+7j$ and $0\le j\le 1$; % reducible from 4^12 3^6 2^11 1^0 or x3_7098, x3_92816, x3_30369, x3_37555, x3_65271, x4_14131, x4_176744, x4_27122, x4_130230, x4_115684, x4_128044, x4_141317, x4_150481, x4_185247, x4_155183, x1_3, x1_40, x1_74, x1_101, x2_460, x2_850, x2_2266, x2_2609, x2_2756, x2_3265, x2_6504, x2_7326, x2_8893, x2_9666, x2_10143, x2_10576, x4_40571, x4_92435
    \item $4^{11} 3^10 2^{4-i} 1^{8+3i}$, where $0\le i\le 4$; % x4_0, x4_48946, x4_106106, x4_40867, x4_45243, x4_59317, x4_67360, x4_72264, x4_77039, x4_86609, x4_96470, x3_15791, x3_26512, x3_31572, x3_33881, x3_37028, x3_46707, x3_49498, x3_58034, x3_67318, x3_81332, x2_7162, x2_7224, x2_10160, x2_10226 and Lemma~\ref{lemma_six_solids_parameters_2} for the case of five lines
    \item $4^{11} 3^{9-3j} 2^{9-i+7j} 1^{3i}$, where $0\le i\le 9+7j$ and $0\le j\le 3$; % x4_60378, x4_100694, x4_65139, x4_96768, x4_68897, x4_104183, x4_72316, x4_92061, x4_74820, x2_303, x2_4685, x2_4692, x2_5023, x2_6541, x2_7255, x2_8003, x2_8299, x2_8468, x3_37, x3_274, x3_1393, x3_25961, x3_32923, x3_36477, x3_42640, x3_49221, x3_80402, x4_169317, x4_178144
    \item $4^{11} 3^{8-3j} 2^{10-i+7j} 1^{4+3i}$, where $0\le i\le 10+7j$ and $0\le j\le 2$; % reducible to 4^11 3^9 2^9 1^0
    \item $4^{11} 3^{7-3j} 2^{12-i+7j} 1^{5+3i}$, where $0\le i\le 12+7j$ and $0\le j\le 2$; % x4_60378, x4_100694, x4_65139, x4_96768, x4_68897, x4_104183, x4_72316, x4_92061, x4_74820, x1_26, x1_27, x1_119, x1_137, x1_244, x2_1133, x2_1222, x2_1491, x2_2697, x2_3247, x2_4286, x2_5087, x2_5536, x2_6355, x2_7631, x2_8572, x2_8903, x3_22237, x3_50785, x3_56114, x3_61586, x3_70286, x3_70973, x3_97108, x4_264, x4_188675
    \item $4^{10} 3^{15}$; % x4_60378, x4_100694, x4_65139, x4_96768, x4_68897, x4_104183, x4_72316, x4_92061, x3_5631, x3_16160, x3_25918, x3_32416, x3_47464, x3_60707, x3_61586, x3_66077, x3_70094, x3_77871, x3_78325, x3_84286, x3_84555, x3_84971, x3_97042, x4_264, x4_22971
    \item $4^{10} 3^{14-3j} 2^{1-i+7j} 1^{4+3i}$, where $0\le i\le 1+7j$ and $0\le j\le 4$; % reducible from 4^10 3^15 2^0 1^0
    \item $4^{10} 3^{13} 2^{2-i} 1^{8+3i}$, where $0\le i\le 2$; % reducible from 4^10 3^14 2^1 1^4
    \item $4^{10} 3^{12-3j} 2^{7-i+7j} 1^{3i}$, where $0\le i\le 7+7j$ and $0\le j\le 4$; % x4_60378, x4_100694, x4_65139, x4_96768, x4_68897, x4_104183, x4_72316, x4_92061, x2_2478, x2_4647, x2_5385, x2_6362, x2_7672, x2_8233, x2_10307, x3_5540, x3_21075, x3_25373, x3_37010, x3_40524, x3_41960, x3_60141, x3_68960, x3_70242, x3_78355, x3_85115, x3_91851, x4_264, x4_22971
    \item $4^{10} 3^{10-3j} 2^{10-i+7j} 1^{5+3i}$, where $0\le i\le 10+7j$ and $0\le j\le 3$; % x4_60378, x4_100694, x4_65139, x4_96768, x4_68897, x4_104183, x4_72316, x4_92061, x1_69, x1_114, x1_123, x1_170, x1_225, x2_1147, x2_3948, x2_4595, x2_4698, x2_4829, x2_6049, x2_6732, x2_7255, x2_8008, x2_8432, x3_10691, x3_17235, x3_25482, x3_29312, x3_68612, x3_70693, x3_76800, x3_78261, x3_87927, x3_91997, x4_264, x4_86802
    \item $4^9 3^{16} 1^8$; % reducible from 4^10 3^15 2^0 1^0
    \item $4^9 3^{15-3j} 2^{5-i+7j} 1^{3i}$, where $0\le i\le 5+7j$ and $0\le j\le 5$; % reducible from 4^10 3^15 2^0 1^0
    \item $4^9 3^{14-3j} 2^{6-i+7j} 1^{4+3i}$, where $0\le i\le 6+7j$ and $0\le j\le 4$; % reducible from 4^9 3^15 2^5 
    \item $4^9 3^{13-3j} 2^{8-i+7j} 1^{5+3i}$, where $0\le i\le 8+7j$ and $0\le j\le 4$; % x4_60378, x4_100694, x4_65139, x4_96768, x4_68897, x4_104183, x4_72316, x4_92061, x1_63, x1_66, x1_101, x1_164, x1_191, x2_1428, x2_5082, x2_5623, x2_6742, x2_8524, x2_8771, x2_9173, x2_9369, x3_4376, x3_11811, x3_16302, x3_25375, x3_37027, x3_56552, x3_69067, x3_69590, x3_74885, x3_76181, x3_84454, x3_85301, x3_95963, x4_264
    \item $4^8 3^{17-3j} 2^{4-i+7j} 1^{4+3i}$, where $0\le i\le 4+7j$ and $0\le j\le 5$; % x3_12248, x3_87245, x3_15803, x3_89771, x3_17764, x3_92007, x3_41714, x3_60303, x3_48436, x3_54392, x3_63669, x3_86604, x3_64316, x3_79919, x3_75518, x3_75549, x4_37158, x4_108456, x4_115084, x4_153367, x4_126733, x4_151492, x4_129665, x4_137725, x1_3, x1_16, x1_63, x1_84, x2_411, x2_1490, x2_1776, x2_2280, x3_1635
    \item $4^8 3^{16-3j} 2^{6-i+7j} 1^{5+3i}$, where $0\le i\le 6+7j$ and $0\le j\le 5$; % x3_9058, x3_10078, x3_17893, x3_91624, x3_28906, x3_29111, x3_33378, x3_35549, x3_38703, x3_59214, x3_72676, x3_74694, x4_14590, x4_192591, x4_69202, x4_100853, x4_78540, x4_87671, x4_160177, x4_189096, x1_3, x1_129, x1_158, x1_194, x1_217, x2_2293, x2_2435, x2_4084, x2_5830, x2_8758, x2_9974, x3_4702, x3_29079, x3_71830, x3_76678
    \item $4^8 3^{15-3j} 2^{10-i+7j} 1^{3i}$, where $0\le i\le 10+7j$ and $0\le j\le 5$; % reducible from 4^9 3^15 2^5 1^0
    \item $4^7 3^{19} 2^{3-i} 1^{8+3i}$, where $0\le i\le 3$; % group_2_8_33.txt; order; prescribe 17 points 4-div dim=6 -> contains three disjoint lines:  2 x1_21, x1_22, x1_24, x1_28, x1_25, x1_30, x1_32, x1_48, x1_37, x1_54, x1_40, x1_60, x1_46, x1_57, x1_0, x1_15, x1_16, x3_18808, x3_88605, x3_33488, x3_35633, x3_40237, x3_62604, x3_42536, x3_54854, x3_47618, x3_54145, x3_66519, x3_81559, x3_67390, x3_80813, x3_70679, x3_85594, x4_79600, x4_86605, x4_118023, x4_154491, x4_130707, x4_140816, x3_2523, x3_72053, x3_74232, x4_23198
    \item $4^7 3^{18-3j} 2^{8-i+7j} 1^{3i}$, where $0\le i\le 8+7j$ and $0\le j\le 6$; % $T(6,0,0)$ in \cite{el2010partitions}
    \item $4^7 3^{17-3j} 2^{9-i+7j} 1^{4+3i}$, where $0\le i\le 9+7j$ and $0\le j\le 5$; % reducible from 4^7 3^18 2^8 1^0 
    \item $4^7 3^{16-3j} 2^{11-i+7j} 1^{5+3i}$, where $0\le i\le 11+7j$ and $0\le j\le 5$; % x3_4296, x3_5237, x3_13264, x3_93180, x3_22857, x3_23180, x3_39701, x3_62105, x3_44712, x3_57030, x3_45931, x3_55826, x3_68987, x3_83930, x3_75646, x3_75677, x4_15865, x4_193364, x4_133281, x4_139291, x4_163882, x4_182375, x1_3, x1_133, x1_154, x1_191, x1_216, x2_419, x2_4655, x2_5229, x2_5681, x2_5863, x2_5984, x2_6788, x2_7174, x2_7501, x2_7592, x2_9070, x4_6924
    \item $4^6 3^{21-3j} 2^{6-i+7j} 1^{3i}$, where $0\le i\le 6+7j$ and $0\le j\le 7$; % $T(7,0,0)$ in \cite{el2010partitions}
    \item $4^6 3^{20-3j} 2^{7-i+7j} 1^{4+3i}$, where $0\le i\le 7+7j$ and $0\le j\le 6$; % reducible from 4^6 3^21 2^6 1^0
    \item $4^6 3^{19-3j} 2^{9-i+7j} 1^{5+3i}$, where $0\le i\le 9+7j$ and $0\le j\le 6$; % x3_16614, x3_93033, x3_18579, x3_88392, x3_22583, x3_22745, x3_49453, x3_53260, x3_65020, x3_82927, x3_68574, x3_79099, x3_69332, x3_84346, x3_73382, x3_73608, x4_70803, x4_94646, x4_121420, x4_157909, x4_126598, x4_151375, x1_20, x1_23, x1_27, x1_34, x1_49, x2_40, x2_421, x2_1035, x2_3884, x2_6644, x2_6846, x2_7468, x2_8404, x2_10268, x3_3725, x3_4000, x3_28290
    \item $4^5 3^{23-3j} 2^{5-i+7j} 1^{4+3i}$, where $0\le i\le 5+7j$ and $0\le j\le 7$; % x3_694, x3_787, x3_13078, x3_93061, x3_17543, x3_92234, x3_41485, x3_60016, x3_46170, x3_56598, x3_47621, x3_54119, x3_48835, x3_52901, x3_66948, x3_81704, x3_68276, x3_79382, x3_70222, x3_85329, x4_33437, x4_107047, x4_127333, x4_152092, x1_139, x1_152, x1_199, x1_220, x2_806, x2_6325, x2_6339, x2_8227, x2_8245, x3_23447, x3_72308, x3_74229, x4_1232
    \item $4^5 3^{22-3j} 2^{7-i+7j} 1^{5+3i}$, where $0\le i\le 7+7j$ and $0\le j\le 7$; % x3_6729, x3_7094, x3_12852, x3_87767, x3_20183, x3_94212, x3_31411, x3_37431, x3_33629, x3_35298, x3_39153, x3_59728, x3_41728, x3_60283, x3_43822, x3_58059, x3_66013, x3_82060, x3_69253, x3_84361, x3_70412, x3_85131, x1_40, x1_97, x1_114, x1_197, x1_253, x2_3630, x2_4793, x2_6744, x2_6754, x2_6954, x2_7249, x2_7305, x4_123, x4_28274, x4_41118, x4_85536, x4_179676
    \item $4^5 3^{21-3j} 2^{11-i+7j} 1^{3i}$, where $0\le i\le 11+7j$ and $0\le j\le 7$; % reducible from 4^6 3^21 2^6 1^0 
    \item $4^4 3^{25} 2^{4-i} 1^{8+3i}$, where $0\le i\le 4$; % x3_9179, x3_10207, x3_17775, x3_91997, x3_18105, x3_91542, x3_18197, x3_88068, x3_23887, x3_24178, x3_31714, x3_38301, x3_32774, x3_37160, x3_46166, x3_56602, x3_65132, x3_83297, x3_70427, x3_85186, x3_71534, x3_77755, x4_70752, x4_94436, x1_17, x1_18, x1_36, x1_52, x1_156, x1_170, x1_216, x1_253, x2_2256, x2_2618, x2_3975, x2_9834, x3_10711, x3_25718, x3_28991, x4_51199, x4_178246; see also Lemma lemma_six_solids_parameters_1 
    \item $4^4 3^{24-3j} 2^{9-i+7j} 1^{3i}$, where $0\le i\le 9+7j$ and $0\le j\le 8$; % $T(8,0,0)$ in \cite{el2010partitions}
    \item $4^4 3^{23-3j} 2^{10-i+7j} 1^{4+3i}$, where $0\le i\le 10+7j$ and $0\le j\le 7$; % reducible from 4^4 3^24 2^9 1^0
    \item $4^4 3^{22-3j} 2^{12-i+7j} 1^{5+3i}$, where $0\le i\le 12+7j$ and $0\le j\le 7$; % reducible from 4^5 3^22 2^7 1^5 
    \item $4^3 3^{29}  2^{1-i} 1^4+3i$, where $0\le i\le 1$; % reducible from 4^3 3^30 2^0 1^0
    \item $4^3 3^{28} 2^{2-i} 1^{8+3i}$, where $0\le i\le 2$; % reducible from 4^3 3^30
    \item $4^3 3^{30-3j} 2^{7j-i} 1^{3i}$, where $0\le i\le 7j$ and $0\le j\le 10$; % $T(9,0,1)$ in \cite{el2010partitions}
    \item $4^3 3^{26-3j} 2^{8-i+7j} 1^{4+3i}$, where $0\le i\le 8+7j$ and $0\le j\le 8$; % reducible from 4^3 3^27 2^7 1^0
    \item $4^3 3^{25-3j} 2^{10-i+7j} 1^{5+3i}$, where $0\le i\le 10+7j$ and $0\le j\le 8$; % x3_5687, x3_5958, x3_15287, x3_90196, x3_22107, x3_96010, x3_22493, x3_22844, x3_42371, x3_60453, x3_42538, x3_54856, x3_48468, x3_54548, x3_49393, x3_53328, x3_63706, x3_86551, x3_65906, x3_80543, x3_69158, x3_84006, x4_73034, x4_96700, x1_68, x1_140, x1_200, x1_249, x1_250, x2_346, x2_529, x2_1815, x2_2106, x2_2899, x2_4306, x2_5157, x2_5341, x2_5555, x2_8995, x3_342, x3_71827, x3_75973, x4_49186
    \item $4^2 3^{31}  1^8$; % reducible from 4^3 3^30 2^0 1^0
    \item $4^2 3^{30-3j} 2^{5-i+7j} 1^{3i}$, where $0\le i\le 5+7j$ and $0\le j\le 10$; % reducible from 4^3 3^30
    \item $4^2 3^{29-3j} 2^{6-i+7j} 1^{4+3i}$, where $0\le i\le 6+7j$ and $0\le j\le 9$; % reducible from 4^2 3^30 2^5 1^0
    \item $4^2 3^{28-3j} 2^{8-i+7j} 1^{5+3i}$, where $0\le i\le 8+7j$ and $0\le j\le 9$; % x3_5674, x3_5911, x3_9593, x3_10540, x3_13810, x3_93825, x3_16948, x3_92569, x3_20369, x3_94269, x3_24933, x3_26819, x3_31440, x3_37425, x3_43395, x3_55461, x3_63128, x3_85942, x3_64123, x3_79854, x3_65684, x3_80602, x3_70383, x3_85228, x3_71490, x3_77147, x4_78419, x4_88314, x1_3, x1_132, x1_151, x1_196, x1_219, x2_35, x2_428, x2_3895, x2_4726, x2_6998, x2_7142, x2_7165, x2_10282, x3_29028, x3_72023
    \item $4^1 3^{32-3j} 2^{4-i+7j} 1^{4+3i}$, where $0\le i\le 4+7j$ and $0\le j\le 10$; % x3_9522, x3_10529, x3_16945, x3_92571, x3_18230, x3_88165, x3_19459, x3_95414, x3_20335, x3_94239, x3_23884, x3_24173, x3_31556, x3_37542, x3_33566, x3_35745, x3_41831, x3_59909, x3_45131, x3_57582, x3_49835, x3_52050, x3_50488, x3_52596, x3_64207, x3_80060, x3_65569, x3_80784, x3_66584, x3_81622, x3_68216, x3_78938, x1_4, x1_15, x1_68, x1_79, x2_4476, x2_6883, x2_7638, x2_10783, x4_706
    \item $4^1 3^{31-3j} 2^{6-i+7j} 1^{5+3i}$, where $0\le i\le 6+7j$ and $0\le j\le 10$; % x3_13271, x3_93188, x3_15635, x3_89588, x3_17672, x3_91911, x3_18715, x3_88529, x3_28532, x3_28822, x3_33556, x3_35732, x3_39824, x3_62193, x3_42356, x3_60472, x3_43890, x3_58127, x3_49220, x3_53478, x3_63934, x3_79789, x3_66532, x3_81604, x3_69552, x3_84139, x3_71774, x3_77581, x1_4, x1_23, x1_27, x1_81, x1_82, x2_277, x2_1890, x2_2129, x2_2312, x2_2351, x2_2476, x3_5067, x3_23013, x3_23192, x4_179676
    \item $4^1 3^{30-3j} 2^{10-i+7j} 1^{3i}$, where $0\le i\le 10+7j$ and $0\le j\le 10$; % reducible from 4^2 3^30 2^5 1^0
    \item $4^0 3^{34} 2^{3-i} 1^{8+3i}$, where $0\le i\le 3$; % maximum partial spread with a hole configuration of dimension six; only three disjoint lines possible
    \item $4^0 3^{33-3j} 2^{8-i+7j} 1^{3i}$, where $0\le i\le 8+7j$ and $0\le j\le 11$; % reducible from 5^1 3^32
    \item $4^0 3^{32-3j} 2^{9-i+7j} 1^{4+3i}$, where $0\le i\le 9+7j$ and $0\le j\le 10$; % reducible from 4^0 3^33 2^8 1^0
    \item $4^0 3^{31-3j} 2^{11-i+7j} 1^{5+3i}$, where $0\le i\le 11+7j$ and $0\le j\le 10$; % reducible from 4^1 3^31 2^6 1^5
  \end{itemize}
\end{theorem}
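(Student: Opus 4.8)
The plan is to prove the two halves of the classification: the statement of the theorem itself (every vector space partition of $\PG(7,2)$ has one of the listed types) and, as asserted in the paragraph preceding it, that conversely each listed type is realized. For the classification (necessity) I would first generate, by a finite search, the complete list of types $(v-1)^{m_{v-1}}\dots 1^{m_1}$ with $v=8$ satisfying the packing condition~(\ref{eq_packing_condition}) and the dimension condition~(\ref{eq_dimension_condition}); since~(\ref{eq_dimension_condition}) forces at most one index $i\ge 5$ to occur, and then with $m_i=1$, the candidates fall into the four blocks $m_7=1$, $m_6=1$, $m_5=1$, and $m_4\in\{0,1,\dots,17\}$ (with $m_5=m_6=m_7=0$), inside each of which $m_3$ runs over an interval and $(m_2,m_1)$ is carried by one or two integer parameters, exactly as in the statement. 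I would then walk through the blocks and discard every type forbidden by the divisibility machinery assembled in Section~\ref{sec_preliminaries}, applied in order of increasing depth: the supertail corollaries following Proposition~\ref{prop_lenghts_div_proj} remove any type whose tail below some occupied dimension is a $\Delta$-divisible projective point set of forbidden effective length; the uniqueness statements of Lemma~\ref{lemma_q_2_subspaces} and the first-order Reed--Muller analogue pin down the small tails (so that, e.g., a ``solid's worth'' of uncovered points must literally be a solid or a union of two disjoint solids, and a plane removed from a solid leaves an affine space with no line, which already settles $m_4\in\{15,16,17\}$); and Lemmas~\ref{lemma_three_solids}, \ref{lemma_four_solids_parameters}, \ref{lemma_five_solids_parameters}, \ref{lemma_six_solids_parameters_1}, \ref{lemma_six_solids_parameters_2}, together with the structural input of Lemma~\ref{lemma_meeting_lines}, eliminate precisely the tight configurations with $m_4\in\{11,12,13,14\}$ and a handful of plane-rich types, where $45$, $60$, $75$ or $90$ leftover points either collapse into a rigid shape classified in \cite{ubt_eref40887} or are shown incompatible with the required solids, planes and lines by an integer-linear-programming feasibility check. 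What survives all of this should be exactly the list in the theorem.

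For the realization direction I would use the reduction rules stated just above the ILP discussion to shrink each family to its finitely many ``extremal'' members --- those with the most high-dimensional subspaces, equivalently the fewest points --- because every other listed type arises from an extremal one by iterating the local replacements: a line by $3$ points; a plane by $7$ points or by a line plus $4$ points; a solid by $15$ points, by $5$ lines, or by a plane plus $8$ points; and three pairwise disjoint planes spanning a $6$-space by the $7$ transversal lines supplied by Lemma~\ref{lemma_meeting_lines} (this last move accounts for the parameter $j$ in the families where $m_3$ drops in steps of $3$). For the remaining extremal types I would exhibit explicit partitions: the Desarguesian solid spread yields $4^{17}$; lifted MRD codes yield $6^1 2^{64}$, $5^1 3^{32}$ and $4^{17}$; the maximal set of $34$ disjoint planes of \cite{el2010maximum} handles the $m_4=0$ extremes; and the rest --- a few hundred types --- are produced by the ILP of Section~\ref{sec_vsp_pg_7_2}, cut down either by prescribing up to three mutually disjoint subspaces (using that the collineation group is transitive on such triples up to the invariant $\dim\langle A,B,C\rangle$) or, when that is still too large, by the Kramer--Mesner orbit reduction under an automorphism subgroup of order at most $16$.

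The hard part is the completeness of the bookkeeping in the necessity direction: one must be certain that the types eliminated by the stated lemmas, together with the list in the theorem, genuinely exhaust \emph{every} numerically feasible type. Many exclusions in the tight blocks are ``inline'' consequences of the structural lemmas rather than separately named statements --- for instance that three disjoint solids in $\PG(7,2)$ can contain at most three pairwise disjoint planes --- so it is easy to overlook a candidate; and, symmetrically, one must check that no listed extremal type is secretly infeasible. The cases $m_4\in\{11,12,13,14\}$ are where this bites: their outcome rests on the computer classification of $8$-divisible binary codes of lengths $45,60,75,90$ from \cite{ubt_eref40887} and on the subsequent ILP runs in Lemmas~\ref{lemma_four_solids_parameters}--\ref{lemma_six_solids_parameters_2}, and verifying that these finitely many computations really do settle every borderline type --- not merely some neighbouring one --- is the crux, which is exactly why the paper flags that several of these non-existence arguments currently depend on computation rather than on a pen-and-paper proof.
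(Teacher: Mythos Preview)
Your plan is correct and follows essentially the same two-pronged strategy as the paper: eliminate infeasible types by layering the packing/dimension conditions, the supertail divisibility corollaries, the uniqueness results from Lemma~\ref{lemma_q_2_subspaces} and its Reed--Muller analogue, and then the dedicated Lemmas~\ref{lemma_three_solids}--\ref{lemma_six_solids_parameters_2}; and realize the surviving types by reducing to extremal members via the local replacement rules and constructing those with spreads, lifted MRD codes, the $34$-plane configuration, and ILP searches with symmetry/Kramer--Mesner reduction. One small point: your ``three disjoint planes in a common $6$-space $\to$ seven transversal lines'' move is a nice extra reduction that the paper does not list among its stated replacement rules---the paper instead treats the different $j$-values by separate ILP constructions---so if you intend to use it you must also argue that in each relevant extremal partition three of the planes can be chosen to span only a $6$-space, which is not automatic.
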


In Proposition~\ref{prop_vsp_pg_7_2_ext} in the appendix we also give a more explicit and extensive variant of the list of feasible types of vector space partitions of $\PG(7,2)$. 
A more compact variant is stated in Theorem~\ref{main_thm_alternative}.

\section{Vector space partitions in $\PG(v-1,q)$ for $v\le 5$}
\label{sec_vsp_pg_small} 
The aim of this short section is to discuss the possible types of vector space partitions of $\PG(v-1,q)$ for all dimensions $v\le 5$ and arbitrary field size $q$. For
$v=1$ the ambient space consists of a single point itself, so that no vector space partition exists since we assume a maximum dimension of $v-1$ for the elements of a vector 
space partition. For the same reason all elements of a vector space partition of $\PG(1,q)$ have dimension $1$. I.e., the only possible type is $1^{m_1}$ where $m_1=[2]_q=q+1$. 
For $v=3$ the dimension condition yields that all elements of a vector space partition $\cP$ of $\PG(2,q)$ have either dimension $1$ or $2$ and that dimension $2$ can occur at most 
once. If $\cP$ does not contain an element of dimension $2$, then its type is given by $1^{m_1}$ where $m_1=[3]_q=q^2+q+1$. In that case $\cP$ is reducible since we may choose any 
line and choose it as an element of the vector space partition instead of its $q+1$ contained points. If $\cP$ is of type $2^1 1^{m_1}$, then we have $m_1=[3]_q-[2]_q=q^2$.

\begin{proposition}
  The possible types of vector space partitions of $\PG(3,q)$ are given by $2^{q^2+1-j} 1^{(q+1)j}$, where $0\le j\le q^2+1$, and $3^1 1^{q^3}$.
\end{proposition}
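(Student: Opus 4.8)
The plan is to first use the packing condition~(\ref{eq_packing_condition}) together with the dimension condition~(\ref{eq_dimension_condition}) to pin down exactly which types are numerically feasible, and then to exhibit an explicit vector space partition realizing each of them. In $\PG(3,q)$ the ambient space has vector dimension $v=4$, so every element of $\cP$ has dimension $1$, $2$, or $3$. Since $3>v/2$ we have $m_3\le 1$; and since a $3$-space together with a disjoint $2$-space would span a $5$-space inside a $4$-dimensional ambient space, $m_3=1$ forces $m_2=0$. This leaves two cases.

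If $m_3=1$, then $m_2=0$ and the packing condition reads $[3]_q + m_1\cdot[1]_q = [4]_q$, so $m_1 = [4]_q-[3]_q = q^3$, giving the type $3^1 1^{q^3}$. Conversely this type is realized by taking a hyperplane $H\cong\PG(2,q)$ as the unique $3$-space together with the $q^3$ points of $\PG(3,q)$ lying outside $H$. If $m_3=0$, the packing condition becomes $m_2(q+1)+m_1 = q^3+q^2+q+1 = (q+1)(q^2+1)$, whence $(q+1)\mid m_1$. Writing $m_1=(q+1)j$ we obtain $m_2 = q^2+1-j$, and non-negativity of $m_1$ and $m_2$ is equivalent to $0\le j\le q^2+1$. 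Thus the only feasible types with $m_3=0$ are precisely $2^{q^2+1-j} 1^{(q+1)j}$ for $0\le j\le q^2+1$.

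It remains to construct, for each such $j$, a vector space partition of this type. Here I would start from a line spread of $\PG(3,q)$, i.e.\ a set $\cS$ of $q^2+1$ pairwise disjoint lines partitioning the point set of $\PG(3,q)$; such spreads exist for every prime power $q$ (for instance a regular/Desarguesian spread). Given $j$ with $0\le j\le q^2+1$, pick any $j$ lines of $\cS$ and replace each of them by its $q+1$ contained points. The result is a vector space partition consisting of $q^2+1-j$ lines and $(q+1)j$ points, i.e.\ of type $2^{q^2+1-j} 1^{(q+1)j}$. (For $j=0$ this is the spread itself, and for $j=q^2+1$ it is the partition of $\PG(3,q)$ into all of its $[4]_q$ points.)

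All of the steps above are short counting arguments, and the only genuinely non-elementary input is the existence of a line spread of $\PG(3,q)$; this is the one ingredient I would cite rather than prove, and it is the sole place where the argument is not completely self-contained.
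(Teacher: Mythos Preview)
Your argument is correct and follows essentially the same route as the paper: the necessary restrictions come from the packing condition~(\ref{eq_packing_condition}) and the dimension condition~(\ref{eq_dimension_condition}), and realizability is obtained from a Desarguesian line spread (replacing $j$ lines by their points) and from a hyperplane together with the $q^3$ points outside it. The only difference is organizational: the paper's proof records just the necessity part, relegating the constructions to the surrounding text, whereas you have folded both directions into a single argument.
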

\begin{proof}
  Directly implied by the packing and the dimension condition in equations~(\ref{eq_packing_condition}) and (\ref{eq_dimension_condition}).
\end{proof}
All of the mentioned types can indeed be attained. A Desarguesian line spread in $\PG(3,q)$ yields a vector space partition of type $2^{q^2+1}$ where we may replace 
arbitrary $j$ lines by their contained points. Choosing an arbitrary plane in the ambient space leaves $q^3$ uncovered points. The latter vector space partition is
irreducible. For vector space partitions of type $2^{q^2+1-j} 1^{(q+1)j}$ it is an interesting question which values of $j$ do allow an irreducible vector 
space partition of that type. This problem is equivalent to the classification of the possible sizes of (inclusion) maximal partial line spreads in $\PG(3,q)$, 
see e.g.\ \cite{gacs2003maximal}.

For vector space partitions of type $2^{m_2}1^{m_1}$ of $\PG(4,q)$ the packing condition in Equation~(\ref{eq_packing_condition}) 
only implies $m_2=q^3+q-j$ and $m_1=1+j(q+1)$ for $0\le j\le q^3+q$. However, the $1+j(q+1)$ points that are not covered by the lines have to correspond to a (projective) $q$-divisible 
linear codes over $\mathbb{F}_q$ of effective length $1+j(q+1)$. Using the characterization result for the possible length of $q^r$-divisible codes over $\mathbb{F}_q$ from
\cite{kiermaier2020lengths} we can conclude $j\ge q-1$. Using this and the packing and the dimension condition in equations~(\ref{eq_packing_condition}) and (\ref{eq_dimension_condition}), 
we conclude: 
\begin{proposition}
  The possible types of vector space partitions of $\PG(4,q)$ are given by $4^1 1^{q^4}$, $3^1 2^{q^3-j} 1^{j(q+1)}$ for $0\le j\le q^3$, and $2^{q^3+1-j}1^{q^2+j(q+1)}$ for $0\le j\le q^3+1$.
\end{proposition}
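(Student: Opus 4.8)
The plan follows the scheme used for the $\PG(3,q)$ case: combine the packing condition~(\ref{eq_packing_condition}) with the dimension condition~(\ref{eq_dimension_condition}), now augmented by the $q$-divisibility bound $j\ge q-1$ derived in the paragraph preceding this proposition. Since $v=5$ we have $v/2=5/2$, so $m_3\le 1$ and $m_4\le 1$, and since $2+4>5$ and $3+4>5$ the dimension condition forces $m_2 m_4=0$ and $m_3 m_4=0$. Hence exactly one of three mutually exclusive situations occurs, and I would treat them separately: (i) $m_4=1$, which forces $m_2=m_3=0$; (ii) $m_4=0$ and $m_3=1$; (iii) $m_3=m_4=0$.

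In case (i) the packing condition reads $[4]_q+m_1=[5]_q$, so $m_1=[5]_q-[4]_q=q^4$, giving the type $4^1 1^{q^4}$. In case (ii) it reads $[3]_q+(q+1)m_2+m_1=[5]_q$, i.e.\ $(q+1)m_2+m_1=[5]_q-[3]_q=q^3(q+1)$; setting $j:=q^3-m_2$, an integer with $0\le j\le q^3$, we get $m_1=j(q+1)$ and the type $3^1 2^{q^3-j}1^{j(q+1)}$. One should check that nothing further is lost here: the only extra necessary condition that applies is that the $m_1=j(q+1)$ points not covered by a subspace of dimension at least $2$ form a $q$-divisible projective code (by the divisibility corollary of Section~\ref{sec_preliminaries}, the plane having dimension $3>1$), and this is no restriction since $j$ pairwise disjoint lines realize such a code of any length $j(q+1)$. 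In case (iii) the packing condition reads $(q+1)m_2+m_1=[5]_q$; since $[5]_q=1+(q^3+q)(q+1)\equiv 1\pmod{q+1}$, we must have $m_1=1+j(q+1)$ and $m_2=q^3+q-j$ for a nonnegative integer $j$, with $j\le q^3+q$ so that $m_2\ge 0$. If $m_2>0$, these $1+j(q+1)$ points form a $q$-divisible projective $\F_q$-code, so the length characterization of \cite{kiermaier2020lengths} yields $j\ge q-1$; if $m_2=0$, the type is the trivial partition into $[5]_q$ points and already has $j=q^3+q\ge q-1$. Substituting $j=i+(q-1)$ rewrites $2^{q^3+q-j}1^{1+j(q+1)}$ with $q-1\le j\le q^3+q$ as $2^{q^3+1-i}1^{q^2+i(q+1)}$ with $0\le i\le q^3+1$, which is the third family in the statement.

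It remains to show that every listed type is attained; as in the $\PG(3,q)$ case one reduces to the ``irreducible'' extreme of each family and then replaces lines by their $q+1$ contained points to sweep out the parameter range. For $4^1 1^{q^4}$ one takes a hyperplane together with all points outside it. For $3^1 2^{q^3}$ one fixes a plane $\pi$ and partitions its complement in $\PG(4,q)$ into $q^3$ lines disjoint from $\pi$: identifying the $q+1$ hyperplanes through $\pi$ with the fibres of the quotient map $\PG(4,q)\dashrightarrow\PG(4,q)/\pi\cong\PG(1,q)$, one checks that $\bigl\{\langle v,\sigma(v)\rangle : v\bigr\}$ is such a partition, where $\sigma$ acts on a complement of $\pi$ as multiplication by a generator of $\F_{q^3}^*$ and therefore has no eigenvector over $\F_q$. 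For the family $2^{q^3+1-j}1^{q^2+j(q+1)}$ the extreme case $2^{q^3+1}1^{q^2}$ asks for a partial line spread of $\PG(4,q)$ whose hole set is an affine plane, and here one invokes the known results on (maximal) partial line spreads of $\PG(4,q)$. The necessity half above is pure bookkeeping once $j\ge q-1$ is in hand, so the actual obstacle is making the attainability half complete and uniform over all listed parameter families -- and, correspondingly, quoting \cite{kiermaier2020lengths} in precisely the form that produces the clean threshold $j\ge q-1$ rather than a weaker bound with exceptional small lengths.
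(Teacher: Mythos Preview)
Your necessity argument is correct and matches the paper's: the packing and dimension conditions together with the $q$-divisibility bound $j\ge q-1$ pin down exactly the three families. The gap is on the attainability side, in your construction for type $3^1 2^{q^3}$. You write that $\sigma$ ``acts on a complement of $\pi$ as multiplication by a generator of $\F_{q^3}^*$'', but a vector-space complement of the plane $\pi$ in $\F_q^5$ is $2$-dimensional and cannot carry an $\F_{q^3}$-structure; the range of $v$ in $\{\langle v,\sigma(v)\rangle\}$ is also left unspecified, and I cannot find a reading of the sentence that yields $q^3$ pairwise disjoint lines missing $\pi$. If you intended $\sigma$ to act on $\pi\cong\F_{q^3}$ itself and the lines to be $\langle w_1+\alpha,\,w_2+\gamma\alpha\rangle$ for $\alpha\in\F_{q^3}$, with $w_1,w_2$ a basis of a complementary line and $\gamma$ a generator of $\F_{q^3}^*$, then that does work --- but that is not what is written.

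The paper's route is simply to cite a lifted MRD code for $3^1 2^{q^3}$ and then, for the extreme case $2^{q^3+1}1^{q^2}$, to split the plane $\pi$ into one of its lines together with the $q^2$ remaining points of $\pi$. This is more direct than your appeal to the partial-spread literature and avoids the unnecessary stipulation that the hole set be an affine plane.
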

All of the mentioned types can indeed be attained. Choosing an arbitrary solid in the ambient space leaves $q^4$ uncovered points. A lifted MRD code gives rise to a vector space 
partition $\cP$ of type $3^1 2^{q^3}$. In $\cP$ we can either replace $j$ lines by their contained points or replace the plane by a line and $q^2$ points to obtain a vector 
space partition $\cP'$ of type $2^{q^3+1} 1^{q^2}$. In $\cP'$ we can then replace replace $j$ lines by their contained points.

\medskip

Using the same methods one can easily characterize all feasible types of vector space partitions in $\PG(5,q)$ that do not contain a plane:
\begin{itemize}
  \item $5^1 1^{q^5}$;
  \item $4^1 2^{q^4-j} 1^{j(q+1)}$ for $0\le j\le q^4$;
  \item $2^{q^4+q^2+1-j} 1^{j(q+1)}$ for $0\le j\le q^4+q^2+1$.
\end{itemize}
A plane spread in $\PG(5,q)$ is a vector space partition of type $3^{q^3+1}$. From there we can easily obtain vector space partitions of type 
$3^{q^3+1-j} 2^{j-i} 1^{i(q+1)}$ for all $0\le j\le q^3+1$ and all $0\le i\le j$.  However, also vector space partitions of other types do exist. 
With increasing dimension of the ambient space the problem of the classification of all feasible types of vector space partitions gets harder and 
harder. We would like to mention that in $\PG(7,3)$ the maximum number $A_3(8,6;3)$ of pairwise disjoint planes is unknown. The currently best known 
bounds are $244\le A_3(8,6;3) \le 248$, see e.g.\ \cite{honold2018partial}. If $248$ such pairwise disjoint planes exist in $\PG(7,3)$, then the $56$ 
uncovered points have to form a so-called \emph{Hill cap} \cite{hill1978caps} corresponding to a two-weight code. Since the support of this object does 
not contain a line, there is e.g.\ no vector space partition of type $3^{248} 2^1 1^{52}$ in $\PG(7,3)$. 

%% \section{Conclusion}
%% \label{sec_conclusion}

%% \bibliographystyle{alphaurl}
%% \bibliography{vsp}

\newcommand{\etalchar}[1]{$^{#1}$}

sascha@sascha-ThinkPad

\appendix
\section{A more explicit variant of the main theorem}
During the paper we have concluded several forbidden supertails. For the ease of the reader we summarize those that have been necessary in the classification of 
the feasible types of vector space partitions of $\PG(7,2)$:
\begin{proposition}
  \label{prop_forbidden_supertails}
  No vector space partition of $\PG(v-1,2)$ exists if it has a supertail of one of the following types:
  \begin{itemize}
    \item $1^i$ for $1\le i\le 2$;
    \item $2^0 1^4$;
    \item $2^{2-i} 1^{2+3i}$ for $0\le i\le 1$;
    \item $2^{3-i} 1^{3i}$ for all $0\le i\le 3$;
    \item $2^{4-i} 1^{3i}$ for all $0\le i\le 4$;
    \item $2^{4-i} 1^{1+3i}$ for all $0\le i\le 4$;
    \item $2^3 1^5$; 
    \item $2^4 1^5$;
    \item $3^3 2^{3-i} 1^{3i}$ for $0\le i\le 3$;
    \item $3^2 2^{5-i} 1^{1+3i}$ for all $0\le i\le 4$;
    \item $3^1 2^6 1^5$;
    \item $3^1 2^{11} 1^5$.
  \end{itemize}
\end{proposition}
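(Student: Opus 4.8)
The plan is to regard this proposition as a consolidation of results already established in Section~\ref{sec_preliminaries}, so that the proof amounts to matching every listed supertail type to an earlier statement, possibly after a trivial reduction. First I would record the following elementary \emph{sub-supertail principle}, immediate from the definition of a supertail: if $\cP$ has a supertail of type $a_1^{m_1}a_2^{m_2}\cdots a_s^{m_s}$, then for every $1\le j\le s$ it also has a supertail of type $a_j^{m_j}a_{j+1}^{m_{j+1}}\cdots a_s^{m_s}$, since discarding the top components still leaves an element of $\cP$ of dimension strictly larger than $a_j$ available. This will let me push several of the three-index families in the list down to one- or two-index ones that are handled more directly.

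Next I would settle the ``arithmetic'' cases by counting the number of points $n=\sum_j m_{a_j}[a_j]_2$ carried by the supertail. A supertail with largest appearing dimension $a_1$ forces the set of points covered by the elements of dimension at most $a_1$ to be a $2^{a_1}$-divisible projective point set of length $n$, so any $n$ that is not a feasible length of a $2$- or $4$-divisible binary projective code is impossible. Concretely, $1^i$ for $i\in\{1,2\}$ gives $n\in\{1,2\}$, the type $2^0 1^4$ gives $n=4$, the family $2^{3-i}1^{3i}$ gives $n=9$, the family $2^{4-i}1^{3i}$ gives $n=12$, and the family $2^{4-i}1^{1+3i}$ gives $n=13$; none of these is admissible by Proposition~\ref{prop_lenghts_div_proj}(a),(b), so each of them already occurs in the corollary stated right after Proposition~\ref{prop_lenghts_div_proj}. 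Invoking the sub-supertail principle, the family $3^3 2^{3-i}1^{3i}$ reduces to $2^{3-i}1^{3i}$ (still $n=9$), the case $3^2 2^5 1^1$ reduces to $1^1$, and $2^2 1^2$ reduces to $1^2$, so these fall here as well.

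What then remains are the ``uniqueness'' cases, where $n$ is admissible but the unique configuration of that length and divisibility does not contain the subspaces that the supertail demands. The $8$-point type $2^1 1^5$ follows from the lemma asserting that a $4$-divisible set of $8$ points is an affine solid, which contains no line. The $14$-point type $2^3 1^5$ and the $30$-point types $3^1 2^6 1^5$ and $3^2 2^{5-i}1^{1+3i}$ for $1\le i\le 4$ are exactly the entries of the corollary obtained from Lemma~\ref{lemma_q_2_subspaces}(c)--(e) together with the dimension count used in the proof of Lemma~\ref{lemma_three_solids}. The $17$-point type $2^4 1^5$ is the lemma that separately excludes that supertail, and the $45$-point type $3^1 2^{11}1^5$ is one of the types listed in Lemma~\ref{lemma_three_solids}.

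I do not anticipate a genuine obstacle here: once the sub-supertail principle is in place, the whole argument is bookkeeping, and the only thing requiring care is verifying the point counts of the one-parameter families and checking that the enumeration above really covers every item in the statement. In particular, the proposition carries no mathematical content beyond the lemmas and corollaries it collects.
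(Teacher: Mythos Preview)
Your proposal is correct and matches the paper's intent exactly: Proposition~\ref{prop_forbidden_supertails} is stated in the appendix as a summary of supertail exclusions already obtained in Section~\ref{sec_preliminaries}, with no separate proof given, and your plan is precisely to trace each item back to the relevant corollary or lemma. The one small addition you make---the ``sub-supertail principle''---is a clean device that handles the cases $2^2 1^2$, $3^2 2^5 1^1$, and $3^3 2^{3-i}1^{3i}$, which are not literally listed among the Section~\ref{sec_preliminaries} statements but reduce immediately to ones that are; this is entirely in the spirit of the paper and requires no further justification.
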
  
With this we can reformulate Theorem~\ref{main_thm} as follows:
\begin{theorem}
  \label{main_thm_alternative}
  Let $\cP$ be a vector space partition of $\PG(7,2)$ of type $7^{m_7}\dot 1^{m_1}$ satisfying the packing condition in Equation~(\ref{eq_packing_condition}) and 
  the dimension condition in Equation~(\ref{eq_dimension_condition}) as well as the special tail conditions from Proposition~\ref{prop_forbidden_supertails}. 
  Then, the type of $\cP$ is not contained in the following exhaustive list:
  \begin{itemize}
    \item $4^{13} 3^6 2^{6-i} 1^{3i}$ for $0\le i\le 6$; 
    \item $4^{13} 3^5 2^{7-i} 1^{4+3i}$ for $0 \le i \le 7$;
    \item $4^{13} 3^4 2^9 1^5$;
    \item $4^{12} 3^8 2^{5-i} 1^{4+3i}$ for $2 \le i \le 5$;
    \item $4^{11} 3^{10} 2^5 15$;
    \item $4^4 3^{25} 2^5 1^5$.
  \end{itemize}
\end{theorem}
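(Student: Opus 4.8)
The plan is to read Theorem~\ref{main_thm_alternative} as a repackaging of Theorem~\ref{main_thm}: the six listed families are precisely the types that pass the numerical and elementary-supertail filters yet are not realised by any vector space partition. First I would introduce the finite set $\mathcal{F}$ of all types $7^{m_7}\dots 1^{m_1}$ that satisfy the packing condition~(\ref{eq_packing_condition}), the dimension condition~(\ref{eq_dimension_condition}), and every forbidden supertail listed in Proposition~\ref{prop_forbidden_supertails}. Enumerating $\mathcal{F}$ is a mechanical finite search: the packing condition bounds the multiplicities, the dimension condition forces $m_im_j=0$ whenever $i\ne j$ and $i+j>8$ together with $m_i\le 1$ for $i\ge 5$, and Proposition~\ref{prop_forbidden_supertails} then discards every remaining type that displays one of the listed forbidden supertails.

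The key combinatorial identity is the disjoint decomposition $\mathcal{F}=\mathcal{L}\sqcup\mathcal{E}$, where $\mathcal{L}$ is the list in Theorem~\ref{main_thm} and $\mathcal{E}$ is the six-family list in Theorem~\ref{main_thm_alternative}. I would verify this by comparing the two explicit lists term by term, stratified by the value of $m_4$ in the same way Theorem~\ref{main_thm} is organised. Granting the identity, the theorem splits into two halves. The non-existence half is immediate from the earlier lemmas: Lemma~\ref{lemma_four_solids_parameters} excludes $4^{13}3^6 2^{6-i}1^{3i}$ for $0\le i\le 6$, $4^{13}3^5 2^{7-i}1^{4+3i}$ for $0\le i\le 7$, and $4^{13}3^4 2^9 1^5$; Lemma~\ref{lemma_five_solids_parameters} excludes $4^{12}3^8 2^{5-i}1^{4+3i}$ for $2\le i\le 5$; Lemma~\ref{lemma_six_solids_parameters_2} excludes $4^{11}3^{10}2^5 1^5$; and Lemma~\ref{lemma_six_solids_parameters_1} excludes $4^4 3^{25}2^5 1^5$. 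For the existence half, by the reduction rules stated at the start of Section~\ref{sec_vsp_pg_7_2} it suffices to build one vector space partition for an irreducible seed of each family of $\mathcal{L}$; replacing a line by its three points, a plane by a line and four points or by its seven points, and a solid by a plane and eight points, by five lines, or by its fifteen points then sweeps out the remaining members of the family. The needed seeds come from the constructions collected in Section~\ref{sec_vsp_pg_7_2}: Desarguesian solid and plane spreads, lifted MRD codes, the Kramer--Mesner orbit method, and direct integer linear programming searches with up to three prescribed pairwise disjoint elements. Since then every vector space partition of $\PG(7,2)$ has a type in $\mathcal{L}=\mathcal{F}\setminus\mathcal{E}$, its type avoids $\mathcal{E}$, as claimed.

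The hard part is not in this reduction but in the two ingredients it consumes. On the construction side one actually has to carry out the ILP searches for a large number of seeds -- the text records more than ten thousand feasible types and thus thousands of irreducible seeds once the reduction rules are applied -- and several of these only become tractable after confining the solid variables to a Desarguesian spread of $17$ solids or after switching to Kramer--Mesner orbit variables for a prescribed group of order up to $16$. On the non-existence side the substance lies inside Lemmas~\ref{lemma_four_solids_parameters}--\ref{lemma_six_solids_parameters_2}: each one runs a standard-equations argument that pins down the hyperplane spectrum of the set $\cH$ of points not covered by the solids, forcing $\cH$ to be a projective two-weight code with specific parameters (lengths $60$, $75$, or $90$ in dimension $8$), then classifies those codes with \texttt{LinCode} and finishes with ILP feasibility checks for packing the prescribed solids, planes, and lines. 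Certifying all of these computations is the real obstacle; the purely pen-and-paper part of Theorem~\ref{main_thm_alternative} is exactly the bookkeeping identity $\mathcal{F}=\mathcal{L}\sqcup\mathcal{E}$.
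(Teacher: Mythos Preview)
Your proposal is correct and matches the paper's approach exactly: the paper presents Theorem~\ref{main_thm_alternative} explicitly as a reformulation of Theorem~\ref{main_thm} (introduced with ``With this we can reformulate Theorem~\ref{main_thm} as follows'') and offers no separate proof, so the content is precisely the bookkeeping identity $\mathcal{F}=\mathcal{L}\sqcup\mathcal{E}$ together with the non-existence lemmas you cite. Your attribution of each exceptional family to Lemmas~\ref{lemma_four_solids_parameters}, \ref{lemma_five_solids_parameters}, \ref{lemma_six_solids_parameters_1}, and \ref{lemma_six_solids_parameters_2} is accurate.
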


A more explicit and extensive variant of our main theorem is given as follows:
\begin{proposition}
  \label{prop_vsp_pg_7_2_ext}
  Let $\cP$ be a vector space partition of $\PG(7,2)$, then $\cP$ has one of the following types:
  \begin{itemize}
    \item $7^2 1^{128}$; % MRD
    \item $6^2 2^{64-i} 1^{3i}$, where $0\le i\le 64$; % MRD
    \item $5^1 3^{32}$; % MRD, 5^1 3^32: x3_5612, x3_6433 ,x3_5831, x3_23840, x3_35519, x3_27073, x3_36550, x3_28724, x3_77294, x3_31796, x3_32176, x3_38740, x3_67374, x3_41975, x3_64243, x3_45335, x3_45547, x3_55279, x3_80244, x3_62880, x3_69312, x3_78141, x3_69525, x3_78355, x3_71677, x3_84518, x3_84721, x3_87605, x3_93972, x3_88791, x3_3612, x3_90053, x5_2928
    \item $5^1 3^{31} 2^{1-i} 1^{4+3i}$, where $0\le i\le 1$; % reducible 
    %%\item $5^1 3^{30} 2^3 1^5$: \textbf{does not exist}
    \item $5^1 3^{29} 2^{7-i} 1^{3i}$, where $0\le i\le 7$; % x5_2928, x3_1884, x3_7987, x3_3276, x3_24490, x3_36752, x3_27181, x3_37412, x3_29940, x3_85867, x3_31134, x3_33405, x3_39482, x3_65532, x3_39875, x3_66905, x3_43103, x3_46217, x3_57565, x3_82280, x3_62658, x3_69161, x3_78005, x3_83891, x3_79629, x3_94178, x3_96194, x3_94991, x2_1194, x2_4321, x2_5268, x2_6573, x2_7203, x2_7497, x2_10052, x3_4709, x3_89660
    \item $5^1 3^{28} 2^{8-i} 1^{4+3i}$, where $0\le i\le 8$; % reducible
    \item $5^1 3^{27} 2^{10-i} 1^{5+3i}$, where $0\le i\le 10$; % x5_2928, x3_1718, x3_7866, x3_2703, x3_25066, x3_37264, x3_27996, x3_68823, x3_29268, x3_85998, x3_33469, x3_39938, x3_63997, x3_43050, x3_48758, x3_73294, x3_53999, x3_57622, x3_81344, x3_58503, x3_81013, x3_59067, x3_60457, x3_63857, x3_78945, x3_94445, x3_96471, x3_95709, x1_30, x1_62, x1_77, x1_190, x1_208, x2_1357, x2_1761, x2_1846, x2_4169, x2_6321, x2_6867, x2_7400, x2_8916, x2_10099, x2_10621
    \item $5^1 3^{26} 2^{14-i} 1^{3i}$, where $0\le i\le 14$; % x5_2928, x3_9137, x3_11095, x3_10361, x3_23419, x3_35992, x3_26470, x3_36140, x3_29739, x3_86642, x3_31744, x3_33117, x3_38823, x3_66809, x3_45061, x3_50326, x3_75730, x3_52939, x3_70262, x3_85074, x3_76605, x3_79390, x3_90936, x3_91807, x3_91150, x2_282, x2_419, x2_5118, x2_5218, x2_5815, x2_6186, x2_6273, x2_6361, x2_6804, x2_7337, x2_7540, x2_7750, x2_8178, x2_8216, x3_1395, x3_89660
    \item $5^1 3^{25} 2^{15-i} 1^{4+3i}$, where $0\le i\le 15$; % reducible
    \item $5^1 3^{24} 2^{17-i} 1^{5+3i}$, where $0\le i\le 17$; % x5_2928, x3_3721, x3_4805, x3_5161, x3_25189, x3_38115, x3_28495, x3_77558, x3_29923, x3_86622, x3_32405, x3_41415, x3_66613, x3_41896, x3_66704, x3_43945, x3_44413, x3_70247, x3_85091, x3_71230, x3_76563, x3_79738, x3_86921, x3_93264, x3_88134, x1_37, x1_215, x1_225, x1_227, x1_247, x2_438, x2_1289, x2_1922, x2_2382, x2_2488, x2_3449, x2_4493, x2_4951, x2_5357, x2_5436, x2_5966, x2_6691, x2_6791, x2_7040, x2_7643, x2_9600, x2_10011
    \item $5^1 3^{23} 2^{21-i} 1^{3i}$, where $0\le i\le 21$; % x5_2928, x3_25872, x3_35069, x3_28834, x3_77391, x3_29351, x3_85823, x3_32554, x3_40128, x3_64459, x3_41965, x3_64230, x3_42522, x3_45356, x3_50077, x3_73706, x3_53648, x3_57753, x3_80457, x3_60099, x3_71762, x3_78983, x2_160, x2_387, x2_1131, x2_1948, x2_2056, x2_2423, x2_2462, x2_2528, x2_2622, x2_4052, x2_4785, x2_4870, x2_4904, x2_7682, x2_9242, x2_9890, x2_9996, x2_10201, x2_10415, x2_10569, x2_10698, x3_3606, x3_90027
    \item $5^1 3^{22} 2^{22-i} 1^{4+3i}$, where $0\le i\le 22$; % reducible
    \item $5^1 3^{21} 2^{24-i} 1^{5+3i}$, where $0\le i\le 24$; % x5_2928, x3_3949, x3_7082, x3_5017, x3_7496, x3_5758, x3_7223, x3_25396, x3_35680, x3_26073, x3_36873, x3_30531, x3_33535, x3_39448, x3_65554, x3_40284, x3_67435, x3_43147, x3_46102, x3_87850, x3_97111, x3_95934, x1_63, x1_187, x1_204, x1_218, x1_233, x2_3096, x2_3765, x2_4230, x2_4581, x2_5485, x2_5682, x2_6332, x2_6519, x2_6620, x2_6836, x2_7089, x2_7130, x2_7266, x2_7465, x2_7758, x2_7891, x2_8171, x2_8488, x2_8705, x2_8971, x2_8990, x2_9297, x2_9717, x2_10274
    \item $5^1 3^{20} 2^{28-i} 1^{3i}$, where $0\le i\le 28$; % x5_2928, x3_3902, x3_5913, x3_5704, x3_22952, x3_23163, x3_23041, x3_30098, x3_86370, x3_40067, x3_65113, x3_44105, x3_49960, x3_72798, x3_54637, x3_79311, x3_94883, x3_96394, x3_95208, x2_799, x2_2036, x2_2207, x2_2593, x2_3770, x2_3988, x2_4389, x2_4577, x2_4978, x2_5110, x2_5593, x2_6104, x2_6221, x2_6315, x2_6487, x2_6603, x2_6864, x2_7125, x2_7246, x2_7526, x2_7922, x2_8104, x2_8251, x2_8300, x2_9333, x2_9888, x2_10115, x2_10529, x3_4316, x3_90614
    \item $5^1 3^{19} 2^{29-i} 1^{4+3i}$, where $0\le i\le 29$; % reducible
    \item $5^1 3^{18} 2^{31-i} 1^{5+3i}$, where $0\le i\le 31$; % x5_2928, x3_23814, x3_34800, x3_26561, x3_36486, x3_30273, x3_31600, x3_40503, x3_65907, x3_46295, x3_69047, x3_78793, x3_84708,x3_87278, x3_93811, x3_88450, x3_94888, x3_96372, x3_95255, x1_68, x1_71, x1_124, x1_131, x1_243, x2_119, x2_994, x2_1196, x2_1408, x2_1641, x2_1770, x2_1793, x2_2034, x2_2094, x2_2209, x2_2316, x2_2462, x2_2765, x2_3751, x2_3920, x2_4014, x2_5343, x2_5401, x2_5495, x2_6570, x2_6844, x2_6949, x2_6996, x2_7065, x2_7154, x2_7551, x2_7822, x2_8289, x2_8391, x2_8535, x2_8686
    \item $5^1 3^{17} 2^{35-i} 1^{3i}$, where $0\le i\le 35$; % x5_2928, x3_5065, x3_7493, x3_5737, x3_29328, x3_85835, x3_50193, x3_72072, x3_52414, x3_56853, x3_83557, x3_60643, x3_79012, x3_94232, x3_96213, x3_95018, x2_778, x2_1491, x2_1543, x2_2143, x2_2408, x2_2941, x2_3300, x2_3350, x2_3749, x2_4680, x2_4776, x2_5030, x2_5234, x2_5575, x2_5675, x2_5714, x2_5827, x2_5936, x2_6016, x2_6062, x2_6141, x2_6172, x2_6446, x2_7092, x2_7268, x2_7648, x2_7655, x2_7778, x2_7873, x2_8044, x2_9123, x2_9247, x2_9869, x2_10195, x2_10393, x3_1421, x3_89660
    \item $5^1 3^{16} 2^{36-i} 1^{4+3i}$, where $0\le i\le 36$; % reducible
    \item $5^1 3^{15} 2^{38-i} 1^{5+3i}$, where $0\le i\le 38$; % x5_2928, x3_30130, x3_86500, x3_41175, x3_66167, x3_44467, x3_46818, x3_74161, x3_52744, x3_57753, x3_80457, x3_60099, x3_79643, x3_94653, x3_96635, x3_95489, x1_26, x1_36, x1_112, x1_167, x1_230, x2_491, x2_715, x2_812, x2_898, x2_906, x2_1570, x2_1603, x2_1703, x2_1878, x2_2016, x2_2218, x2_2280, x2_2436, x2_3615, x2_3851, x2_4205, x2_4390, x2_4888, x2_4928, x2_4946, x2_5174, x2_5252, x2_5269, x2_5307, x2_5722, x2_5780, x2_6012, x2_6018, x2_6557, x2_7129, x2_7405, x2_8358, x2_8852, x2_9276, x2_9543, x2_9754, x2_9807, x2_10358
    \item $5^1 3^{14} 2^{42-i} 1^{3i}$, where $0\le i\le 42$; % x5_2928, x3_23513, x3_37193, x3_25911, x3_35763, x3_32771, x3_34071, x3_41197, x3_66150, x3_44460, x3_87902, x3_97091, x3_95969, x2_132, x2_282, x2_769, x2_1204, x2_1305, x2_1559, x2_1590, x2_1665, x2_2026, x2_2082, x2_2111, x2_2504, x2_2798, x2_4280, x2_4321, x2_4526, x2_5398, x2_5501, x2_5625, x2_5664, x2_5679, x2_5794, x2_5972, x2_6268, x2_6532, x2_6611, x2_6786, x2_7214, x2_7216, x2_7396, x2_7659, x2_7807, x2_8250, x2_8301, x2_8364, x2_8671, x2_8709, x2_8769, x2_9490, x2_9870, x2_9920, x2_10174, x3_4709, x3_89643
    \item $5^1 3^{13} 2^{43-i} 1^{4+3i}$, where $0\le i\le 43$; % reducible
    \item $5^1 3^{12} 2^{45-i} 1^{5+3i}$, where $0\le i\le 45$; % x5_2928, x3_24281, x3_35017, x3_31875, x3_55358, x3_83393, x3_56950, x3_81212, x3_59334, x3_62175, x3_89947, x3_91954, x3_91736, x1_16, x1_68, x1_74, x1_109, x1_112, x2_14, x2_25, x2_768, x2_1259, x2_1463, x2_1735, x2_1801, x2_1910, x2_1920, x2_1948, x2_2168, x2_2465, x2_2622, x2_2940, x2_3137, x2_3754, x2_4172, x2_4359, x2_4787, x2_4923, x2_5039, x2_5157, x2_5197, x2_5885, x2_5904, x2_6065, x2_6150, x2_6332, x2_6744, x2_6892, x2_7041, x2_7120, x2_7257, x2_7343, x2_7724, x2_8193, x2_8307, x2_8424, x2_9005, x2_9098, x2_9826, x2_10069, x2_10313, x2_10396, x2_10732
    \item $5^1 3^{11} 2^{49-i} 1^{3i}$, where $0\le i\le 49$; % x5_2928, x3_23824, x3_34813, x3_30250, x3_70715, x3_85286, x3_76350, x3_94945, x3_96680, x3_95031, x2_164, x2_717, x2_1256, x2_1287, x2_1721, x2_1735, x2_1772, x2_1975, x2_1984, x2_2126, x2_2318, x2_2345, x2_2414, x2_2820, x2_3029, x2_3538, x2_3584, x2_4071, x2_4512, x2_4788, x2_4900, x2_4905, x2_5571, x2_5596, x2_5881, x2_6108, x2_6292, x2_6566, x2_6591, x2_6725, x2_7022, x2_7215, x2_7241, x2_7298, x2_7472, x2_7594, x2_7730, x2_7847, x2_7978, x2_8021, x2_8034, x2_8343, x2_8640, x2_8944, x2_9033, x2_9212, x2_9408, x2_9700, x2_10237, x3_1421, x3_90027
    \item $5^1 3^{10} 2^{50-i} 1^{4+3i}$, where $0\le i\le 50$; % reducible
    \item $5^1 3^9 2^{52-i} 1^{5+3i}$, where $0\le i\le 52$; % x5_2928, x3_47239, x3_73333, x3_48912, x3_73885, x3_50918, x3_54569, x3_70591, x3_85129, x3_76217, x1_98, x1_99, x1_116, x1_180, x1_198, x2_128, x2_822, x2_1049, x2_1084, x2_1334, x2_1376, x2_1516, x2_1547, x2_1656, x2_1813, x2_1863, x2_2024, x2_2087, x2_2111, x2_2360, x2_2926, x2_3077, x2_3246, x2_3253, x2_3343, x2_4076, x2_4580, x2_4845, x2_4876, x2_5110, x2_5297, x2_5800, x2_5810, x2_5907, x2_6008, x2_6011, x2_6716, x2_7099, x2_7296, x2_8083, x2_8457, x2_8671, x2_8799, x2_8864, x2_8875, x2_9032, x2_9144, x2_9150, x2_9284, x2_9360, x2_9568, x2_9791, x2_10065, x2_10267, x2_10439, x2_10533, x2_10710
    \item $5^1 3^8 2^{56-i} 1^{3i}$, where $0\le i\le 56$; % x5_2928, x3_8686, x3_10916, x3_9769, x3_28275, x3_77097, x3_71029, x2_940, x2_970, x2_1102, x2_1368, x2_1864, x2_2351, x2_2437, x2_2569, x2_3034, x2_3220, x2_3627, x2_3647, x2_3749, x2_3836, x2_3967, x2_4503, x2_4708, x2_4802, x2_5158, x2_5299, x2_5609, x2_5648, x2_5677, x2_5946, x2_6033, x2_6042, x2_6079, x2_6164, x2_6394, x2_6675, x2_6754, x2_6971, x2_7021, x2_7293, x2_7585, x2_7868, x2_7925, x2_7978, x2_7997, x2_8411, x2_8448, x2_8679, x2_9147, x2_9251, x2_9272, x2_9396, x2_9407, x2_9556, x2_9750, x2_9835, x2_10048, x2_10065, x2_10141, x2_10378, x2_10592, x2_10779, x3_4700, x3_90603
    \item $5^1 3^7 2^{57-i} 1^{4+3i}$, where $0\le i\le 57$; % reducible
    \item $5^1 3^6 2^{59-i} 1^{5+3i}$, where $0\le i\le 59$; % x5_2928, x3_40190, x3_66273, x3_45130, x3_70362, x3_85212, x3_76713, x1_65, x1_106, x1_111, x1_178, x1_233, x2_262, x2_347, x2_370, x2_585, x2_674, x2_751, x2_782, x2_822, x2_1007, x2_1095, x2_1229, x2_1837, x2_2102, x2_2144, x2_2371, x2_2573, x2_3182, x2_3196, x2_3237, x2_3796, x2_3940, x2_4776, x2_5399, x2_5500, x2_5525, x2_5679, x2_5852, x2_5897, x2_5931, x2_6078, x2_6167, x2_6386, x2_6469, x2_6478, x2_6819, x2_6901, x2_7272, x2_7309, x2_7472, x2_7704, x2_7899, x2_8029, x2_8115, x2_8514, x2_8642, x2_8720, x2_8819, x2_8828, x2_8937, x2_8946, x2_9148, x2_9252, x2_9583, x2_9725, x2_9937, x2_10049, x2_10219, x2_10443, x2_10530
    \item $5^1 3^5 2^{63-i} 1^{3i}$, where $0\le i\le 63$; % x5_2928, x3_30003, x3_86245, x3_79193, x2_94, x2_303, x2_610, x2_726, x2_760, x2_963, x2_1009, x2_1614, x2_2010, x2_2050, x2_2086, x2_2378, x2_2414, x2_2470, x2_2589, x2_2687, x2_3345, x2_3368, x2_3708, x2_3777, x2_4111, x2_4858, x2_5060, x2_5078, x2_5251, x2_5335, x2_5404, x2_5677, x2_5709, x2_5719, x2_5851, x2_5952, x2_6016, x2_6238, x2_6292, x2_6554, x2_6969, x2_7077, x2_7199, x2_7276, x2_7401, x2_7422, x2_7530, x2_7569, x2_7576, x2_8023, x2_8051, x2_8060, x2_8256, x2_8415, x2_8703, x2_9488, x2_9526, x2_9631, x2_9846, x2_9922, x2_9965, x2_10027, x2_10144, x2_10217, x2_10460, x2_10591, x2_10624, x3_4683, x3_86739
    \item $5^1 3^4 2^{64-i} 1^{4+3i}$, where $0\le i\le 64$; % reducible
    \item $5^1 3^3 2^{66-i} 1^{5+3i}$, where $0\le i\le 66$; % x5_2928, x3_38670, x3_66207, x3_44536, x1_11, x1_85, x1_86, x1_209, x1_222, x2_151, x2_227, x2_419, x2_676, x2_1084, x2_1146, x2_1332, x2_1366, x2_1409, x2_1480, x2_1695, x2_1865, x2_1872, x2_1981, x2_2080, x2_2207, x2_2214, x2_2675, x2_3402, x2_3615, x2_3851, x2_4085, x2_4335, x2_4468, x2_4795, x2_4887, x2_4907, x2_5210, x2_5294, x2_5508, x2_5738, x2_6040, x2_6244, x2_6301, x2_6371, x2_6428, x2_6571, x2_6654, x2_6771, x2_6822, x2_6885, x2_6913, x2_7183, x2_7338, x2_7724, x2_8093, x2_8163, x2_8391, x2_8593, x2_8642, x2_8742, x2_8777, x2_8794, x2_8883, x2_9130, x2_9148, x2_9242, x2_9582, x2_9776, x2_9918, x2_10065, x2_10156, x2_10432, x2_10438, x2_10487, x2_10493
    \item $5^1 3^2 2^{70-i} 1^{3i}$, where $0\le i\le 70$; % x5_2928, x2_58, x2_265, x2_711, x2_1051, x2_1270, x2_1662, x2_1945, x2_1953, x2_1972, x2_2008, x2_2090, x2_2155, x2_2263, x2_2327, x2_2530, x2_2613, x2_2832, x2_2836, x2_2885, x2_3306, x2_3387, x2_3670, x2_4092, x2_4622, x2_4753, x2_4758, x2_4772, x2_4823, x2_4831, x2_5228, x2_5434, x2_5529, x2_5652, x2_5873, x2_6228, x2_6579, x2_6668, x2_6694, x2_6776, x2_6824, x2_6849, x2_7147, x2_7192, x2_7221, x2_7262, x2_7435, x2_7615, x2_7977, x2_8089, x2_8097, x2_8472, x2_8562, x2_8584, x2_8686, x2_8697, x2_8741, x2_8992, x2_9169, x2_9264, x2_9631, x2_9814, x2_9856, x2_9919, x2_10008, x2_10101, x2_10340, x2_10343, x2_10529, x2_10548, x2_10728, x3_4316, x3_89643
    \item $5^1 3^1 2^{71-i} 1^{4+3i}$, where $0\le i\le 71$; % reducible
    \item $5^1 2^{73-i} 1^{5+3i}$, where $0\le i\le 73$; % x5_2928, x1_53, x1_137, x1_176, x1_242, x1_253, x2_40, x2_151, x2_420, x2_713, x2_793, x2_1148, x2_1236, x2_1294, x2_1344, x2_1610, x2_1685, x2_1809, x2_1833, x2_1931, x2_2013, x2_2048, x2_2163, x2_2435, x2_2653, x2_2707, x2_3018, x2_3089, x2_3391, x2_3808, x2_4234, x2_4507, x2_4706, x2_4841, x2_5234, x2_5349, x2_5450, x2_5481, x2_5782, x2_5807, x2_5897, x2_6019, x2_6062, x2_6503, x2_6571, x2_6628, x2_6704, x2_6977, x2_7011, x2_7022, x2_7066, x2_7225, x2_7321, x2_7363, x2_7535, x2_7739, x2_7750, x2_7804, x2_8144, x2_8153, x2_8301, x2_8450, x2_8487, x2_8652, x2_8780, x2_8854, x2_8910, x2_9148, x2_9167, x2_9254, x2_9387, x2_9496, x2_9705, x2_9737, x2_9744, x2_10204, x2_10228, x2_10489, x2_10724
    \item $4^{17}$; % MRD, x4_60378, x4_100694, x4_65139, x4_96768, x4_68897, x4_104183, x4_72316, x4_92061, x4_74820, x4_79337, x4_83115, x4_86802, x4_169317, x4_188675, x4_178144, x4_264, x4_22971
    \item $4^{16} 3^1 1^8$; % reducible
    \item $4^{16} 2^{5-i} 1^{3i}$, where $0\le i\le 5$; % reducible
    \item $4^{15} 3^2 1^{16}$; % reducible
    \item $4^{15} 3^1 2^{5-i} 1^{8+3i}$, where $0\le i\le 5$; % reducible    
    \item $4^{15} 2^{10-i} 1^{3i}$, where $0\le i\le 10$; % reducible
    \item $4^{14} 3^3 2^{8-i} 1^{3i}$, where $0\le i\le 8$; % x4_25994, x4_108237, x4_38400, x4_119976, x4_166307, x4_129066, x4_129967, x4_170870, x4_137042, x4_146004, x4_185499, x4_154048, x2_289, x2_419, x2_5374, x2_5505, x2_6839, x2_7466, x2_7960, x2_8774, x3_39377, x3_45744, x3_66886, x4_189, x4_45435
    \item $4^{14} 3^2 2^{9-i} 1^{4+3i}$, where $0\le i\le 9$; % reducible   
    \item $4^{14} 3^1 2^{10-i} 1^{8+3i}$, where $0\le i\le 10$; % x4_24323, x4_106696, x4_33128, x4_60968, x4_96352, x4_82660, x4_132730, x4_177343, x4_140802, x4_149429, x4_186060, x4_158499, x1_60, x1_97, x1_140, x1_159, x1_180, x1_209, x1_233, x1_254, x2_22, x2_1896, x2_2541, x2_2847, x2_5078, x2_5884, x2_6063, x2_6676, x2_7225, x2_9837, x3_46181, x4_189, x4_53404    
    %% \item $4^{14} 3^1 2^{11} 1^5$ \textbf{impossible due to Lemma~\ref{lemma_three_solids}; %
    \item $4^{14} 3^0 2^{15-i} 1^{3i}$, where $0\le i\le 15$; % x4_25469, x4_107622, x4_38160, x4_121041, x4_160817, x4_128192, x4_132137, x4_172939, x4_142745, x4_148559, x4_184906, x4_153518, x2_506, x2_1514, x2_2419, x2_3313, x2_4524, x2_4916, x2_5409, x2_5603, x2_6070, x2_6269, x2_7235, x2_7753, x2_8322, x2_9405, x2_10053, x4_239, x4_49069
    %%\item $4^{13} 3^6 2^{6-i} 1^{3i}$, where $0\le i\le 6$ \textbf{impossible} due to Lemma~\ref{lemma_four_solids_parameters}
    %%\item $4^{13} 3^5 2^7 1^4$ \textbf{impossible} due to Lemma~\ref{lemma_four_solids_parameters}
    %%\item $4^{13} 3^5 2^{7-i} 1^{4+3i}$, where $0\le i\le 7$ \textbf{impossible} due to Lemma~\ref{lemma_four_solids_parameters}
    %%\item $4^{13} 3^4 2^{9} 1^{5}$ \textbf{impossible} due to Lemma~\ref{lemma_four_solids_parameters}
    \item $4^{13} 3^4 2^{8-i} 1^{8+3i}$, where $0\le i\le 8$; % reducible from 4^14 3^3 2^8 
    \item $4^{13} 3^3 2^{13-i} 1^{3i}$, where $0\le i\le 13$; % reducible from 4^14 3^3 2^8 1^0
    \item $4^{13} 3^2 2^{14-i} 1^{4+3i}$, where $0\le i\le 14$; % reducible from 4^13 3^3 2^13 1^0
    \item $4^{13} 3^1 2^{16-i} 1^{5+3i}$where $0\le i\le 16$; % x4_60378, x4_100694, x4_65139, x4_96768, x4_68897, x4_104183, x4_72316, x4_92061, x4_74820, x4_79337, x4_83115, x4_86802, x1_14, x1_143, x1_185, x1_191, x1_228, x2_10, x2_237, x2_284, x2_642, x2_651, x2_1237, x2_2213, x2_2410, x2_2669, x2_2815, x2_4573, x2_4689, x2_8767, x2_8965, x2_9209, x2_10093, x3_70693, x4_178144
    %%\item $4^{13} 3^1 2^{15-i} 1^{8+3i}$, where $0\le i\le 15$; % reducible from 4^4 2^15
    \item $4^{13} 3^0 2^{20-i} 1^{3i}$, where $0\le i\le 20$; % reducible from 4^14 3^0 2^15 1^0
    %%\item $4^{12} 3^{10} 1^5$ \textbf{does not exist} since there is no $4$-divisible set of cardinality $5$
    %%\item $4^{12} 3^9 1^{12}$ \textbf{does not exist} since there is no $4$-divisible set of cardinality $12$ 
    \item $4^{12} 3^8 2^{1-i} 1^{16+3i}$, where $0\le i\le 1$; % Lemma~\ref{lemma_five_solids_parameters} and x2_5331, x3_10, x3_10211, x3_13052, x3_26908, x3_47029, x3_54009, x3_85938, x3_96700, x4_50084, x4_55233, x4_58728, x4_63302, x4_67799, x4_76277, x4_80761, x4_82224, x4_89054, x4_91674, x4_98179, x4_106061 or x2_0, x3_67, x3_11697, x3_11854, x3_46579, x3_46740, x3_48098, x3_48321, x3_95952, x4_47581, x4_51578, x4_55071, x4_57916, x4_61461, x4_69847, x4_74596, x4_83393, x4_86936, x4_93427, x4_96926, x4_101458 or x2_1484, x3_4, x3_13836, x3_34452, x3_36184, x3_51672, x3_68957, x3_78120, x3_87092, x4_46071, x4_52566, x4_55696, x4_57438, x4_62282, x4_72050, x4_74175, x4_85017, x4_87779, x4_90280, x4_95777, x4_101067
    \item $4^{12} 3^7 2^{7-i} 1^{5+3i}$,  where $0\le i\le 7$; % x3_17577, x3_92260, x3_46950, x3_50820, x4_60052, x4_91827, x4_64872, x4_104412, x4_72719, x4_96562, x4_119465, x4_159624, x4_130311, x4_138107, x4_159903, x4_189390, x1_84, x1_108, x1_120, x1_150, x1_213, x2_309, x2_1027, x2_3234, x2_3551, x2_4838, x2_9440, x2_9617, x3_64, x3_2171, x3_74198
    \item $4^{12} 3^6 2^{11-i} 1^{3i}$,  where $0\le i\le 11$; % $T(3,0,0)$ in \cite{el2010partitions}
    \item $4^{12} 3^5 2^{12-i} 1^{4+3i}$,  where $0\le i\le 11$; % reducible from 4^12 3^6 2^11 1^0 or x3_7098, x3_92816, x3_30369, x3_37555, x3_65271, x4_14131, x4_176744, x4_27122, x4_130230, x4_115684, x4_128044, x4_141317, x4_150481, x4_185247, x4_155183, x1_3, x1_40, x1_74, x1_101, x2_460, x2_850, x2_2266, x2_2609, x2_2756, x2_3265, x2_6504, x2_7326, x2_8893, x2_9666, x2_10143, x2_10576, x4_40571, x4_92435
    \item $4^{12} 3^4 2^{14-i} 1^{5+3i}$, where $0\le i\le 14$; % x4_60378, x4_100694, x4_65139, x4_96768, x4_68897, x4_104183, x4_72316, x4_92061, x4_74820, x1_3, x1_206, x1_222, x1_229, x1_241, x2_22, x2_654, x2_2671, x2_4207, x2_4591, x2_5201, x2_5227, x2_5706, x2_5724, x2_6057, x2_6116, x2_7663, x2_7948, x2_8784, x3_5402, x3_70030, x3_76246, x3_85301, x4_264, x4_79337
    \item $4^{12} 3^3 2^{18-i} 1^{3i}$, where $0\le i\le 18$; %  reducible to 4^13 3^3 2^13 1^0
    \item $4^{12} 3^2 2^{19-i} 1^{4+3i}$, where $0\le i\le 19$; % reducible to 4^12 3^3 2^18 1^0
    \item $4^{12} 3^1 2^{21-i} 1^{5+3i}$, where $0\le i\le 21$; % x4_60378, x4_100694, x4_65139, x4_96768, x4_68897, x4_104183, x4_72316, x4_92061, x4_74820, x1_67, x1_175, x1_179, x1_185, x1_249, x2_14, x2_1079, x2_1783, x2_2275, x2_2329, x2_3078, x2_4105, x2_4386, x2_5160, x2_6108, x2_6437, x2_6748, x2_7183, x2_7560, x2_7863, x2_8046, x2_8432, x2_9197, x2_9510, x2_10291, x2_10724, x3_70559, x4_264, x4_22971, x4_79337
    \item $4^{12} 2^{25-i} 1^{3i}$, where $0\le i\le 25$; % reducible to 4^13 2^20
    \item $4^{11} 3^10 2^{4-i} 1^{8+3i}$, where $0\le i\le 4$; % x4_0, x4_48946, x4_106106, x4_40867, x4_45243, x4_59317, x4_67360, x4_72264, x4_77039, x4_86609, x4_96470, x3_15791, x3_26512, x3_31572, x3_33881, x3_37028, x3_46707, x3_49498, x3_58034, x3_67318, x3_81332, x2_7162, x2_7224, x2_10160, x2_10226 and Lemma~\ref{lemma_six_solids_parameters_2} for the case of five lines
    \item $4^{11} 3^9 2^{9-i} 1^{3i}$, where $0\le i\le 9$; % x4_60378, x4_100694, x4_65139, x4_96768, x4_68897, x4_104183, x4_72316, x4_92061, x4_74820, x2_303, x2_4685, x2_4692, x2_5023, x2_6541, x2_7255, x2_8003, x2_8299, x2_8468, x3_37, x3_274, x3_1393, x3_25961, x3_32923, x3_36477, x3_42640, x3_49221, x3_80402, x4_169317, x4_178144
    \item $4^{11} 3^8 2^{10-i} 1^{4+3i}$, where $0\le i\le 10$; % reducible to 4^11 3^9 2^9 1^0
    \item $4^{11} 3^7 2^{12-i} 1^{5+3i}$, where $0\le i\le 12$; % x4_60378, x4_100694, x4_65139, x4_96768, x4_68897, x4_104183, x4_72316, x4_92061, x4_74820, x1_26, x1_27, x1_119, x1_137, x1_244, x2_1133, x2_1222, x2_1491, x2_2697, x2_3247, x2_4286, x2_5087, x2_5536, x2_6355, x2_7631, x2_8572, x2_8903, x3_22237, x3_50785, x3_56114, x3_61586, x3_70286, x3_70973, x3_97108, x4_264, x4_188675
    \item $4^{11} 3^6 2^{16-i} 1^0{3i}$, where $0\le i\le 16$; % x4_60378, x4_100694, x4_65139, x4_96768, x4_68897, x4_104183, x4_72316, x4_92061, x4_74820, x2_1146, x2_1239, x2_2496, x2_2626, x2_2703, x2_3126, x2_3866, x2_4254, x2_4832, x2_5657, x2_6045, x2_6732, x2_8987, x2_9500, x2_9779, x2_10700, x3_22229, x3_46066, x3_50785, x3_70559, x3_72251, x3_76688, x4_264, x4_188675
    \item $4^{11} 3^5 2^{17-i} 1^{4+3i}$, where $0\le i\le 17$; % reducible to 4^11 3^6 2^16 1^0
    \item $4^{11} 3^4 2^{19-i} 1^{5+3i}$, where $0\le i\le 19$; % x4_60378, x4_100694, x4_65139, x4_96768, x4_68897, x4_104183, x4_72316, x4_92061, x4_74820, x1_1, x1_27, x1_95, x1_154, x1_228, x2_21, x2_588, x2_671, x2_1193, x2_1212, x2_1814, x2_2706, x2_3065, x2_3487, x2_4318, x2_5537, x2_6173, x2_6476, x2_6662, x2_8584, x2_9196, x2_9464, x2_10146, x2_10528, x3_1200, x3_22235, x3_83323, x3_85013, x4_83115, x4_178144
    \item $4^{11} 3^3 2^{23-i} 1^{3i}$, where $0\le i\le 23$; % reducible to 4^12 3^3 2^18 1^0
    \item $4^{11} 3^2 2^{24-i} 1^{4+3i}$, where $0\le i\le 24$; % reducible to 4^11 3^3 2^23 1^0
    \item $4^{11} 3^1 2^{26-i} 1^{5+3i}$, where $0\le i\le 26$; % x4_60378, x4_100694, x4_65139, x4_96768, x4_68897, x4_104183, x4_72316, x4_92061, x4_74820, x1_3, x1_23, x1_30, x1_127, x1_130, x2_18, x2_196, x2_433, x2_653, x2_842, x2_1098, x2_1684, x2_2090, x2_2292, x2_3065, x2_3320, x2_3373, x2_3495, x2_3937, x2_4689, x2_6543, x2_6630, x2_6729, x2_6748, x2_8219, x2_8827, x2_8832, x2_9602, x2_10067, x2_10690, x2_10792, x3_74885, x4_83115, x4_188675
    \item $4^{11} 2^{30-i} 1^{3i}$, where $0\le i\le 30$; % reducible 4^12 2^25
    \item $4^{10} 3^{15}$; % x4_60378, x4_100694, x4_65139, x4_96768, x4_68897, x4_104183, x4_72316, x4_92061, x3_5631, x3_16160, x3_25918, x3_32416, x3_47464, x3_60707, x3_61586, x3_66077, x3_70094, x3_77871, x3_78325, x3_84286, x3_84555, x3_84971, x3_97042, x4_264, x4_22971
    \item $4^{10} 3^{14} 2^{1-i} 1^{4+3i}$, where $0\le i\le 1$; % reducible from 4^10 3^15 2^0 1^0
    \item $4^{10} 3^{13} 2^{2-i} 1^{8+3i}$, where $0\le i\le 2$; % reducible from 4^10 3^14 2^1 1^4
    \item $4^{10} 3^{12} 2^{7-i} 1^{3i}$, where $0\le i\le 7$; % x4_60378, x4_100694, x4_65139, x4_96768, x4_68897, x4_104183, x4_72316, x4_92061, x2_2478, x2_4647, x2_5385, x2_6362, x2_7672, x2_8233, x2_10307, x3_5540, x3_21075, x3_25373, x3_37010, x3_40524, x3_41960, x3_60141, x3_68960, x3_70242, x3_78355, x3_85115, x3_91851, x4_264, x4_22971
    \item $4^{10} 3^{11} 2^{8-i} 1^{4+3i}$, where $0\le i\le 8$; % reducible from 4^10 3^12 2^7 1^0
    \item $4^{10} 3^{10} 2^{10-i} 1^{5+3i}$, where $0\le i\le 10$; % x4_60378, x4_100694, x4_65139, x4_96768, x4_68897, x4_104183, x4_72316, x4_92061, x1_69, x1_114, x1_123, x1_170, x1_225, x2_1147, x2_3948, x2_4595, x2_4698, x2_4829, x2_6049, x2_6732, x2_7255, x2_8008, x2_8432, x3_10691, x3_17235, x3_25482, x3_29312, x3_68612, x3_70693, x3_76800, x3_78261, x3_87927, x3_91997, x4_264, x4_86802
    \item $4^{10} 3^9 2^{14-i} 1^{3i}$, where $0\le i\le 14$; % reducible from 4^11 3^9 2^9 1^0
    \item $4^{10} 3^8 2^{15-i} 1^{4+3i}$, where $0\le i\le 15$; % reducible from 4^10 3^9 2^14 1^0
    \item $4^{10} 3^7 2^{17-i} 1^{5+3i}$, where $0\le i\le 17$; % reducible from 4^11 3^7 2^12 1^5
    \item $4^{10} 3^6 2^{21-i} 1^{3i}$, where $0\le i\le 21$; % reducible from 4^11 3^6 2^16 1^0
    \item $4^{10} 3^5 2^{22-i} 1^{4+3i}$, where $0\le i\le 22$; % reducible from 4^10 3^6 2^21 1^0
    \item $4^{10} 3^4 2^{24-i} 1^{5+3i}$, where $0\le i\le 24$; % reducible from 4^11 3^4 2^19 1^5
    \item $4^{10} 3^3 2^{28-i} 1^{3i}$, where $0\le i\le 28$; % reducible from 4^11 3^3 2^23 1^0
    \item $4^{10} 3^2 2^{29-i} 1^{4+3i}$, where $0\le i\le 29$; % reducible from 4^10 3^3 2^28 1^0
    \item $4^{10} 3^1 2^{31-i} 1^{5+3i}$, where $0\le i\le 31$; % reducible from 4^11 3^1 2^26 1^5
    \item $4^{10} 2^{35-i} 1^{3i}$, where $0\le i\le 35$; % reducible from 4^11 2^30
    \item $4^9 3^{16} 1^8$; % reducible from 4^10 3^15 2^0 1^0
    \item $4^9 3^{15} 2^{5-i} 1^{3i}$, where $0\le i\le 5$; % reducible from 4^10 3^15 2^0 1^0
    \item $4^9 3^{14} 2^{6-i} 1^{4+3i}$, where $0\le i\le 6$; % reducible from 4^9 3^15 2^5 
    \item $4^9 3^{13} 2^{8-i} 1^{5+3i}$, where $0\le i\le 8$; % x4_60378, x4_100694, x4_65139, x4_96768, x4_68897, x4_104183, x4_72316, x4_92061, x1_63, x1_66, x1_101, x1_164, x1_191, x2_1428, x2_5082, x2_5623, x2_6742, x2_8524, x2_8771, x2_9173, x2_9369, x3_4376, x3_11811, x3_16302, x3_25375, x3_37027, x3_56552, x3_69067, x3_69590, x3_74885, x3_76181, x3_84454, x3_85301, x3_95963, x4_264
    \item $4^9 3^{12} 2^{12-i} 1^{3i}$, where $0\le i\le 12$; % reducible from 4^10 3^12 2^7 1^0
    \item $4^9 3^{11} 2^{13-i} 1^{4+3i}$, where $0\le i\le 13$; % reducible from 4^9 3^12 2^12 1^0
    \item $4^9 3^{10} 2^{15-i} 1^{5+3i}$, where $0\le i\le 15$; % reducible from 4^10 3^10 2^10 1^5
    \item $4^9 3^9 2^{19-i} 1^{3i}$, where $0\le i\le 19$; % reducible from 4^10 3^9 2^14 1^0
    \item $4^9 3^8 2^{20-i} 1^{4+3i}$, where $0\le i\le 20$; % reducible from 4^9 3^9 2^19 1^0
    \item $4^9 3^7 2^{22-i} 1^{5+3i}$, where $0\le i\le 22$; % reducible from 4^10 3^7 2^17 1^5
    \item $4^9 3^6 2^{26-i} 1^{3i}$, where $0\le i\le 26$; % reducible from 4^10 3^6 2^21 1^0
    \item $4^9 3^5 2^{27-i} 1^{4+3i}$, where $0\le i\le 27$; % reducible from 4^9 3^6 2^26 1^0
    \item $4^9 3^4 2^{29-i} 1^{5+3i}$, where $0\le i\le 29$; % reducible from 4^10 3^4 2^24 1^5 
    \item $4^9 3^3 2^{33-i} 1^{3i}$, where $0\le i\le 33$; % reducible from 4^10 3^3 2^28 1^0
    \item $4^9 3^2 2^{34-i} 1^{4+3i}$, where $0\le i\le 34$; % reducible from 4^9 3^3 2^33 1^0
    \item $4^9 3^1 2^{36-i} 1^{5+3i}$, where $0\le i\le 36$; % reducible from 4^10 3^1 2^31 1^5
    \item $4^9 2^{40-i} 1^{3i}$, where $0\le i\le 40$; % reducible from 4^10 2^35
    \item $4^8 3^{17} 2^{4-i} 1^{4+3i}$, where $0\le i\le 4$; % x3_12248, x3_87245, x3_15803, x3_89771, x3_17764, x3_92007, x3_41714, x3_60303, x3_48436, x3_54392, x3_63669, x3_86604, x3_64316, x3_79919, x3_75518, x3_75549, x4_37158, x4_108456, x4_115084, x4_153367, x4_126733, x4_151492, x4_129665, x4_137725, x1_3, x1_16, x1_63, x1_84, x2_411, x2_1490, x2_1776, x2_2280, x3_1635
    \item $4^8 3^{16} 2^{6-i} 1^{5+3i}$, where $0\le i\le 6$; % x3_9058, x3_10078, x3_17893, x3_91624, x3_28906, x3_29111, x3_33378, x3_35549, x3_38703, x3_59214, x3_72676, x3_74694, x4_14590, x4_192591, x4_69202, x4_100853, x4_78540, x4_87671, x4_160177, x4_189096, x1_3, x1_129, x1_158, x1_194, x1_217, x2_2293, x2_2435, x2_4084, x2_5830, x2_8758, x2_9974, x3_4702, x3_29079, x3_71830, x3_76678
    \item $4^8 3^{15} 2^{10-i} 1^{3i}$, where $0\le i\le 10$; % reducible from 4^9 3^15 2^5 1^0
    \item $4^8 3^{14} 2^{11-i} 1^{4+3i}$, where $0\le i\le 11$; % reducible from 4^8 3^15 2^10 1^0
    \item $4^8 3^{13} 2^{13-i} 1^{5+3i}$, where $0\le i\le 13$; % reducible from 4^9 3^13 2^8 1^5
    \item $4^8 3^{12} 2^{17-i} 1^{3i}$, where $0\le i\le 17$; % reducible from 4^9 3^12 2^12 1^0
    \item $4^8 3^{11} 2^{18-i} 1^{4+3i}$, where $0\le i\le 18$; % reducible from 4^8 3^12 2^17 1^0
    \item $4^8 3^{10} 2^{20-i} 1^{5+3i}$, where $0\le i\le 20$; % reducible from 4^9 3^10 2^15 1^5
    \item $4^8 3^9 2^{24-i} 1^{3i}$, where $0\le i\le 24$; % reducible from 4^9 3^9 2^19 1^0
    \item $4^8 3^8 2^{25-i} 1^{4+3i}$, where $0\le i\le 25$; % reducible from 4^8 3^9 2^24 1^0
    \item $4^8 3^7 2^{27-i} 1^{5+3i}$, where $0\le i\le 27$; % reducible from 4^9 3^7 2^22 1^5
    \item $4^8 3^6 2^{31-i} 1^{3i}$, where $0\le i\le 31$; % reducible from 4^9 3^6 2^26 1^0
    \item $4^8 3^5 2^{32-i} 1^{4+3i}$, where $0\le i\le 32$; % reducible from 4^8 3^6 2^31 1^0
    \item $4^8 3^4 2^{34-i} 1^{5+3i}$, where $0\le i\le 34$; % reducible from 4^9 3^4 2^29 1^5
    \item $4^8 3^3 2^{38-i} 1^{3i}$, where $0\le i\le 38$; % reducible from 4^9 3^3 2^33 1^0
    \item $4^8 3^2 2^{39-i} 1^{4+3i}$, where $0\le i\le 39$; % reducible from 4^8 3^3 2^38 1^0
    \item $4^8 3^1 2^{41-i} 1^{5+3i}$, where $0\le i\le 41$; % reducible from 4^9 3^1 2^36 1^5 
    \item $4^8 3^0 2^{45-i} 1^{3i}$, where $0\le i\le 45$; % reducible
    \item $4^7 3^{19} 2^{3-i} 1^{8+3i}$, where $0\le i\le 3$; % group_2_8_33.txt; order; prescribe 17 points 4-div dim=6 -> contains three disjoint lines:  2 x1_21, x1_22, x1_24, x1_28, x1_25, x1_30, x1_32, x1_48, x1_37, x1_54, x1_40, x1_60, x1_46, x1_57, x1_0, x1_15, x1_16, x3_18808, x3_88605, x3_33488, x3_35633, x3_40237, x3_62604, x3_42536, x3_54854, x3_47618, x3_54145, x3_66519, x3_81559, x3_67390, x3_80813, x3_70679, x3_85594, x4_79600, x4_86605, x4_118023, x4_154491, x4_130707, x4_140816, x3_2523, x3_72053, x3_74232, x4_23198
    \item $4^7 3^{18} 2^{8-i} 1^{3i}$, where $0\le i\le 8$; % $T(6,0,0)$ in \cite{el2010partitions}
    \item $4^7 3^{17} 2^{9-i} 1^{4+3i}$, where $0\le i\le 9$; % reducible from 4^7 3^18 2^8 1^0 
    \item $4^7 3^{16} 2^{11-i} 1^{5+3i}$, where $0\le i\le 11$; % x3_4296, x3_5237, x3_13264, x3_93180, x3_22857, x3_23180, x3_39701, x3_62105, x3_44712, x3_57030, x3_45931, x3_55826, x3_68987, x3_83930, x3_75646, x3_75677, x4_15865, x4_193364, x4_133281, x4_139291, x4_163882, x4_182375, x1_3, x1_133, x1_154, x1_191, x1_216, x2_419, x2_4655, x2_5229, x2_5681, x2_5863, x2_5984, x2_6788, x2_7174, x2_7501, x2_7592, x2_9070, x4_6924
    \item $4^7 3^{15} 2^{15-i} 1^0{3i}$, where $0\le i\le 15$; % reducible from 4^8 3^15 2^10 1^0
    \item $4^7 3^{14} 2^{16-i} 1^{4+3i}$, where $0\le i\le 16$; % reducible from 4^7 3^15 2^15 1^0
    \item $4^7 3^{13} 2^{18-i} 1^{5+3i}$, where $0\le i\le 18$; % reducible from 4^8 3^13 2^13 1^5
    \item $4^7 3^{12} 2^{22-i} 1^{3i}$, where $0\le i\le 22$; % reducible from 4^8 3^12 2^17 1^0
    \item $4^7 3^{11} 2^{23-i} 1^{4+3i}$, where $0\le i\le 23$; % reducible from 4^7 3^12 2^22 1^0
    \item $4^7 3^{10} 2^{25-i} 1^{5+3i}$, where $0\le i\le 25$; % reducible from 4^8 3^10 2^20 1^5
    \item $4^7 3^9 2^{29-i} 1^{3i}$, where $0\le i\le 29$; % reducible from 4^8 3^9 2^24 1^0
    \item $4^7 3^8 2^{30-i} 1^{4+3i}$, where $0\le i\le 30$; % reducible from 4^7 3^9 2^29 1^0
    \item $4^7 3^7 2^{32-i} 1^{5+3i}$, where $0\le i\le 32$; % reducible from 4^8 3^7 2^27 1^5
    \item $4^7 3^6 2^{36-i} 1^{3i}$, where $0\le i\le 36$; % reducible from 4^8 3^6 2^31 1^0
    \item $4^7 3^5 2^{37-i} 1^{4+3i}$, where $0\le i\le 37$; % reducible from 4^7 3^6 2^36 1^0
    \item $4^7 3^4 2^{39-i} 1^{5+3i}$, where $0\le i\le 39$; % reducible from 4^8 3^4 2^34 1^5
    \item $4^7 3^3 2^{43-i} 1^{3i}$, where $0\le i\le 43$; % reducible from 4^8 3^3 2^38 1^0
    \item $4^7 3^2 2^{44-i} 1^{4+3i}$, where $0\le i\le 44$; % reducible from 4^7 3^3 2^43 1^0
    \item $4^7 3^1 2^{46-i} 1^{5+3i}$, where $0\le i\le 46$; % reducible from 4^8 3^1 2^41 1^5
    \item $4^7  2^{50-i} 1^{3i}$, where $0\le i\le 50$; % reducible from 4^8  2^55
    \item $4^6 3^{21} 2^{6-i} 1^{3i}$, where $0\le i\le 6$; % $T(7,0,0)$ in \cite{el2010partitions}
    \item $4^6 3^{20} 2^{7-i} 1^{4+3i}$, where $0\le i\le 7$; % reducible from 4^6 3^21 2^6 1^0
    \item $4^6 3^{19} 2^{9-i} 1^{5+3i}$, where $0\le i\le 9$; % x3_16614, x3_93033, x3_18579, x3_88392, x3_22583, x3_22745, x3_49453, x3_53260, x3_65020, x3_82927, x3_68574, x3_79099, x3_69332, x3_84346, x3_73382, x3_73608, x4_70803, x4_94646, x4_121420, x4_157909, x4_126598, x4_151375, x1_20, x1_23, x1_27, x1_34, x1_49, x2_40, x2_421, x2_1035, x2_3884, x2_6644, x2_6846, x2_7468, x2_8404, x2_10268, x3_3725, x3_4000, x3_28290
    \item $4^6 3^{18} 2^{13-i} 1^{3i}$, where $0\le i\le 13$; % reducible from 4^7 3^18 2^8 1^0
    \item $4^6 3^{17} 2^{14-i} 1^{4+3i}$, where $0\le i\le 14$; % reducible from 4^7 3^17 2^9 1^4
    \item $4^6 3^{16} 2^{16-i} 1^{5+3i}$, where $0\le i\le 16$; % reducible from 4^7 3^16 2^11 1^5
    \item $4^6 3^{15} 2^{20-i} 1^{3i}$, where $0\le i\le 20$; % reducible from 4^7 3^15 2^15 1^0
    \item $4^6 3^{14} 2^{21-i} 1^{4+3i}$, where $0\le i\le 21$; % reducible from 4^6 3^15 2^20 1^0
    \item $4^6 3^{13} 2^{23-i} 1^{5+3i}$, where $0\le i\le 23$; % reducible from 4^7 3^13 2^18 1^5    
    \item $4^6 3^{12} 2^{27-i} 1^{3i}$, where $0\le i\le 27$; % reducible from 4^7 3^12 2^22 1^0
    \item $4^6 3^{11} 2^{28-i} 1^{4+3i}$, where $0\le i\le 28$; % reducible from 4^6 3^12 2^27 1^0
    \item $4^6 3^{10} 2^{30-i} 1^{5+3i}$, where $0\le i\le 30$; % reducible from 4^7 3^10 2^25 1^5
    \item $4^6 3^9 2^{34-i} 1^{3i}$, where $0\le i\le 34$; % reducible from 4^7 3^9 2^29 1^0
    \item $4^6 3^8 2^{35-i} 1^{4+3i}$, where $0\le i\le 35$; % reducible from 4^6 3^9 2^34 1^0 
    \item $4^6 3^7 2^{37-i} 1^{5+3i}$, where $0\le i\le 37$; % reducible from 4^7 3^7 2^32 1^5
    \item $4^6 3^6 2^{41-i} 1^{3i}$, where $0\le i\le 41$; % reducible from 4^7 3^6 2^36 1^0
    \item $4^6 3^5 2^{42-i} 1^{4+3i}$, where $0\le i\le 42$; % reducible from 4^6 3^6 2^41 1^0
    \item $4^6 3^4 2^{44-i} 1^{5+3i}$, where $0\le i\le 44$; % reducible from 4^7 3^4 2^39 1^5
    \item $4^6 3^3 2^{48-i} 1^{3i}$, where $0\le i\le 48$; % reducible from 4^7 3^3 2^43 1^0
    \item $4^6 3^2 2^{49-i} 1^{4+3i}$, where $0\le i\le 49$; % reducible from 4^6 3^3 2^48 1^0
    \item $4^6 3^1 2^{51-i} 1^{5+3i}$, where $0\le i\le 51$; % reducible from 4^7 3^1 2^46 1^5
    \item $4^6 3^0 2^{55-i} 1^{3i}$, where $0\le i\le 55$; % reducible from 4^7 2^50
    \item $4^5 3^{23} 2^{5-i} 1^{4+3i}$, where $0\le i\le 5$; % x3_694, x3_787, x3_13078, x3_93061, x3_17543, x3_92234, x3_41485, x3_60016, x3_46170, x3_56598, x3_47621, x3_54119, x3_48835, x3_52901, x3_66948, x3_81704, x3_68276, x3_79382, x3_70222, x3_85329, x4_33437, x4_107047, x4_127333, x4_152092, x1_139, x1_152, x1_199, x1_220, x2_806, x2_6325, x2_6339, x2_8227, x2_8245, x3_23447, x3_72308, x3_74229, x4_1232
    \item $4^5 3^{22} 2^{7-i} 1^{5+3i}$, where $0\le i\le 7$; % x3_6729, x3_7094, x3_12852, x3_87767, x3_20183, x3_94212, x3_31411, x3_37431, x3_33629, x3_35298, x3_39153, x3_59728, x3_41728, x3_60283, x3_43822, x3_58059, x3_66013, x3_82060, x3_69253, x3_84361, x3_70412, x3_85131, x1_40, x1_97, x1_114, x1_197, x1_253, x2_3630, x2_4793, x2_6744, x2_6754, x2_6954, x2_7249, x2_7305, x4_123, x4_28274, x4_41118, x4_85536, x4_179676
    \item $4^5 3^{21} 2^{11-i} 1^{3i}$, where $0\le i\le 11$; % reducible from 4^6 3^21 2^6 1^0 
    \item $4^5 3^{20} 2^{12-i} 1^{4+3i}$, where $0\le i\le 12$; % reducible from 4^6 3^20 2^7 1^4 or x3_18079, x3_74971, x3_19484, x3_21282, x3_76301, x3_26699, x3_46770, x3_28048, x3_28808, x3_38583, x3_43982, x3_52809, x3_46156, x3_51350, x3_58650, x3_56603, x3_80240, x3_80292, x3_60030, x3_60916, x4_6901, x4_195713, x4_34143, x4_111786, x4_163399, x1_36, x1_105, x1_171, x1_226, x2_152, x2_293, x2_545, x2_687, x2_1763, x2_4800, x2_5789, x2_6166, x2_6728, x2_8325, x2_8870, x2_10767 
    \item $4^5 3^{19} 2^{14-i} 1^{5+3i}$, where $0\le i\le 14$; % reducible from 4^6 3^19 2^9 1^5 
    \item $4^5 3^{18} 2^{18-i} 1^{3i}$, where $0\le i\le 18$; % reducible from 4^6 3^18 2^13 1^0
    \item $4^5 3^{17} 2^{19-i} 1^{4+3i}$, where $0\le i\le 19$; % reducible from 4^6 3^17 2^14 1^4
    \item $4^5 3^{16} 2^{21-i} 1^{5+3i}$, where $0\le i\le 21$; % reducible from 4^6 3^16 2^16 1^5
    \item $4^5 3^{15} 2^{25-i} 1^{3i}$, where $0\le i\le 25$; % reducible from 4^6 3^15 2^20 1^0
    \item $4^5 3^{14} 2^{26-i} 1^{4+3i}$, where $0\le i\le 26$; % reducible from 4^6 3^14 2^21 1^4
    \item $4^5 3^{13} 2^{28-i} 1^{5+3i}$, where $0\le i\le 28$; % reducible from 4^6 3^13 2^23 1^5
    \item $4^5 3^{12} 2^{32-i} 1^{3i}$, where $0\le i\le 32$; % reducible from 4^6 3^12 2^27 1^0
    \item $4^5 3^{11} 2^{33-i} 1^{4+3i}$, where $0\le i\le 33$; % reducible from 4^6 3^11 2^28 1^4
    \item $4^5 3^{10} 2^{35-i} 1^{5+3i}$, where $0\le i\le 35$; % reducible from 4^6 3^10 2^30 1^5
    \item $4^5 3^9 2^{39-i} 1^{3i}$, where $0\le i\le 39$; % reducible from 4^6 3^9 2^34 1^0
    \item $4^5 3^8 2^{40-i} 1^{4+3i}$, where $0\le i\le 40$; % reducible from 4^6 3^8 2^35 1^4
    \item $4^5 3^7 2^{42-i} 1^{5+3i}$, where $0\le i\le 42$; % reducible from 4^6 3^7 2^37 1^5
    \item $4^5 3^6 2^{46-i} 1^{3i}$, where $0\le i\le 46$; % reducible from 4^6 3^6 2^41 1^0
    \item $4^5 3^5 2^{47-i} 1^{4+3i}$, where $0\le i\le 47$; % reducible from 4^6 3^5 2^42 1^4
    \item $4^5 3^4 2^{49-i} 1^{5+3i}$, where $0\le i\le 49$; % reducible from 4^6 3^4 2^44 1^5
    \item $4^5 3^3 2^{53-i} 1^{3i}$, where $0\le i\le 53$; % reducible from 4^6 3^3 2^48 1^0
    \item $4^5 3^2 2^{54-i} 1^{4+3i}$, where $0\le i\le 54$; % reducible from 4^6 3^2 2^49 1^4
    \item $4^5 3^1 2^{56-i} 1^{5+3i}$, where $0\le i\le 56$; % reducible from 4^6 3^1 2^51 1^5
    \item $4^5 2^{60-i} 1^{3i}$, where $0\le i\le 60$; % reducible from 4^6 3^0 2^55 1^0
    \item $4^4 3^{25} 2^{4-i} 1^{8+3i}$, where $0\le i\le 4$; % x3_9179, x3_10207, x3_17775, x3_91997, x3_18105, x3_91542, x3_18197, x3_88068, x3_23887, x3_24178, x3_31714, x3_38301, x3_32774, x3_37160, x3_46166, x3_56602, x3_65132, x3_83297, x3_70427, x3_85186, x3_71534, x3_77755, x4_70752, x4_94436, x1_17, x1_18, x1_36, x1_52, x1_156, x1_170, x1_216, x1_253, x2_2256, x2_2618, x2_3975, x2_9834, x3_10711, x3_25718, x3_28991, x4_51199, x4_178246; see also Lemma lemma_six_solids_parameters_1 
    \item $4^4 3^{24} 2^{9-i} 1^{3i}$, where $0\le i\le 9$; % $T(8,0,0)$ in \cite{el2010partitions}
    \item $4^4 3^{23} 2^{10-i} 1^{4+3i}$, where $0\le i\le 10$; % reducible from 4^4 3^24 2^9 1^0
    \item $4^4 3^{22} 2^{12-i} 1^{5+3i}$, where $0\le i\le 12$; % reducible from 4^5 3^22 2^7 1^5 
    \item $4^4 3^{21} 2^{16-i} 1^{3i}$, where $0\le i\le 16$; % reducible from 4^5 3^21 2^11 1^0
    \item $4^4 3^{20} 2^{17-i} 1^{4+3i}$, where $0\le i\le 17$; % reducible from 4^4 3^21 2^16 1^0
    \item $4^4 3^{19} 2^{19-i} 1^{5+3i}$, where $0\le i\le 19$; % reducible from 4^5 3^19 2^14 1^5
    \item $4^4 3^{18} 2^{23-i} 1^{3i}$, where $0\le i\le 23$; % reducible from 4^5 3^18 2^18 1^0
    \item $4^4 3^{17} 2^{24-i} 1^{4+3i}$, where $0\le i\le 24$; % reducible from 4^5 3^17 2^19 1^4
    \item $4^4 3^{16} 2^{26-i} 1^{5+3i}$, where $0\le i\le 26$; % reducible from 4^5 3^16 2^21 1^5
    \item $4^4 3^{15} 2^{30-i} 1^{3i}$, where $0\le i\le 30$; % reducible from 4^5 3^15 2^25 1^0
    \item $4^4 3^{14} 2^{31-i} 1^{4+3i}$, where $0\le i\le 31$; % reducible from 4^5 3^14 2^26 1^4
    \item $4^4 3^{13} 2^{33-i} 1^{5+3i}$, where $0\le i\le 33$; % reducible from 4^5 3^13 2^28 1^5
    \item $4^4 3^{12} 2^{37-i} 1^{3i}$, where $0\le i\le 37$; % reducible from 4^5 3^12 2^32 1^0
    \item $4^4 3^{11} 2^{38-i} 1^{4+3i}$, where $0\le i\le 38$; % reducible from 4^5 3^11 2^33 1^4
    \item $4^4 3^{10} 2^{40-i} 1^{5+3i}$, where $0\le i\le 40$; % reducible from 4^5 3^10 2^35 1^5
    \item $4^4 3^9 2^{44-i} 1^{3i}$, where $0\le i\le 44$; % reducible from 4^5 3^9 2^39 1^0
    \item $4^4 3^8 2^{45-i} 1^{4+3i}$, where $0\le i\le 45$; % reducible from 4^5 3^8 2^40 1^4
    \item $4^4 3^7 2^{47-i} 1^{5+3i}$, where $0\le i\le 47$; % reducible from 4^5 3^7 2^42 1^5
    \item $4^4 3^6 2^{51-i} 1^{3i}$, where $0\le i\le 51$; % reducible from 4^5 3^6 2^46 1^0
    \item $4^4 3^5 2^{52-i} 1^{4+3i}$, where $0\le i\le 52$; % reducible from 4^5 3^5 2^47 1^4
    \item $4^4 3^4 2^{54-i} 1^{5+3i}$, where $0\le i\le 54$; % reducible 4^5 3^4 2^49 1^5
    \item $4^4 3^3 2^{58-i} 1^{3i}$, where $0\le i\le 58$; % reducible from 4^5 3^3 2^53 1^0
    \item $4^4 3^2 2^{59-i} 1^{4+3i}$, where $0\le i\le 59$; % reducible from 4^5 3^2 2^54 1^4
    \item $4^4 3^1 2^{61-i} 1^{5+3i}$, where $0\le i\le 61$; % reducible from 4^5 3^1 2^56 1^5
    \item $4^4 3^0 2^{65-i} 1^{3i}$, where $0\le i\le 65$; % reducible from 4^5 3^0 2^60 1^0
    \item $4^3 3^{30}$; % $T(9,0,0)$ in \cite{el2010partitions} 
    \item $4^3 3^{29}  2^{1-i} 1^4+3i$, where $0\le i\le 1$; % reducible from 4^3 3^30 2^0 1^0 
    \item $4^3 3^{28} 2^{2-i} 1^{8+3i}$, where $0\le i\le 2$; % reducible from 4^3 3^30
    \item $4^3 3^{27} 2^{7-i} 1^{3i}$, where $0\le i\le 7$; % $T(9,0,1)$ in \cite{el2010partitions}
    \item $4^3 3^{26} 2^{8-i} 1^{4+3i}$, where $0\le i\le 8$; % reducible from 4^3 3^27 2^7 1^0
    \item $4^3 3^{25} 2^{10-i} 1^{5+3i}$, where $0\le i\le 10$; % x3_5687, x3_5958, x3_15287, x3_90196, x3_22107, x3_96010, x3_22493, x3_22844, x3_42371, x3_60453, x3_42538, x3_54856, x3_48468, x3_54548, x3_49393, x3_53328, x3_63706, x3_86551, x3_65906, x3_80543, x3_69158, x3_84006, x4_73034, x4_96700, x1_68, x1_140, x1_200, x1_249, x1_250, x2_346, x2_529, x2_1815, x2_2106, x2_2899, x2_4306, x2_5157, x2_5341, x2_5555, x2_8995, x3_342, x3_71827, x3_75973, x4_49186
    \item $4^3 3^{24} 2^{14-i} 1^{3i}$, where $0\le i\le 14$; % $T(9,0,2)$ in \cite{el2010partitions}
    \item $4^3 3^{23} 2^{15-i} 1^{4+3i}$, where $0\le i\le 15$; % reducible from 4^3 3^24 2^14 1^0
    \item $4^3 3^{22} 2^{17-i} 1^{5+3i}$, where $0\le i\le 17$; % reducible from 4^4 3^22 2^12 1^5 
    \item $4^3 3^{21} 2^{21-i} 1^{3i}$, where $0\le i\le 21$; % $T(9,0,3)$ in \cite{el2010partitions}
    \item $4^3 3^{20} 2^{22-i} 1^{4+3i}$, where $0\le i\le 22$; % reducible from 4^3 3^21 2^21 1^0
    \item $4^3 3^{19} 2^{24-i} 1^{5+3i}$, where $0\le i\le 24$; % reducible from 4^4 3^19 2^19 1^5
    \item $4^3 3^{18} 2^{28-i} 1^{3i}$, where $0\le i\le 2$; % reducible from 4^4 3^18 2^23 1^0
    \item $4^3 3^{17} 2^{29-i} 1^{4+3i}$, where $0\le i\le 29$; % reducible from 4^4 3^17 2^24 1^4 
    \item $4^3 3^{16} 2^{31-i} 1^{5+3i}$, where $0\le i\le 31$; % reducible from 4^4 3^16 2^26 1^5
    \item $4^3 3^{15} 2^{35-i} 1^{3i}$, where $0\le i\le 35$; % reducible from 4^4 3^15 2^32 1^0
    \item $4^3 3^{14} 2^{36-i} 1^{4+3i}$, where $0\le i\le 36$; % reducible from 4^4 3^14 2^31 1^4 
    \item $4^3 3^{13} 2^{38-i} 1^{5+3i}$, where $0\le i\le 38$; % reducible from 4^4 3^13 2^33 1^5
    \item $4^3 3^{12} 2^{42-i} 1^{3i}$, where $0\le i\le 42$; % reducible from 4^4 3^12 2^39 1^0
    \item $4^3 3^{11} 2^{43-i} 1^{4+3i}$, where $0\le i\le 43$; % reducible from 4^4 3^11 2^38 1^4 
    \item $4^3 3^{10} 2^{45-i} 1^{5+3i}$, where $0\le i\le 45$; % reducible from 4^4 3^10 2^40 1^5
    \item $4^3 3^9 2^{49-i} 1^{3i}$, where $0\le i\le 49$; % reducible from 4^4 3^9 2^44 1^0
    \item $4^3 3^8 2^{50-i} 1^{4+3i}$, where $0\le i\le 50$; % reducible from 4^4 3^8 2^45 1^4
    \item $4^3 3^7 2^{52-i} 1^{5+3i}$, where $0\le i\le 52$; % reducible from 4^4 3^7 2^47 1^5
    \item $4^3 3^6 2^{56-i} 1^{3i}$, where $0\le i\le 56$; % reducible from 4^4 3^6 2^51 1^0
    \item $4^3 3^5 2^{57-i} 1^{4+3i}$, where $0\le i\le 57$; % reducible from 4^4 3^5 2^52 1^4
    \item $4^3 3^4 2^{59-i} 1^{5+3i}$, where $0\le i\le 59$; % reducible from 4^4 3^4 2^54 1^5
    \item $4^3 3^3 2^{63-i} 1^{3i}$, where $0\le i\le 63$; % reducible from 4^4 3^3 2^58 1^0
    \item $4^3 3^2 2^{64-i} 1^{4+3i}$, where $0\le i\le 64$; % 4^4 3^2 2^59 1^4
    \item $4^3 3^1 2^{66-i} 1^{5+3i}$, where $0\le i\le 66$; % reducible from 4^4 3^1 2^61 1^5
    \item $4^3 3^0 2^{70-i} 1^{3i}$, where $0\le i\le 70$; % reducible from 4^4 3^0 2^65 1^0
    \item $4^2 3^{31}  1^8$; % reducible from 4^3 3^30 2^0 1^0
    \item $4^2 3^{30} 2^{5-i} 1^{3i}$, where $0\le i\le 5$; % reducible from 4^3 3^30
    \item $4^2 3^{29} 2^{6-i} 1^{4+3i}$, where $0\le i\le 6$; % reducible from 4^2 3^30 2^5 1^0
    \item $4^2 3^{28} 2^{8-i} 1^{5+3i}$, where $0\le i\le 8$; % x3_5674, x3_5911, x3_9593, x3_10540, x3_13810, x3_93825, x3_16948, x3_92569, x3_20369, x3_94269, x3_24933, x3_26819, x3_31440, x3_37425, x3_43395, x3_55461, x3_63128, x3_85942, x3_64123, x3_79854, x3_65684, x3_80602, x3_70383, x3_85228, x3_71490, x3_77147, x4_78419, x4_88314, x1_3, x1_132, x1_151, x1_196, x1_219, x2_35, x2_428, x2_3895, x2_4726, x2_6998, x2_7142, x2_7165, x2_10282, x3_29028, x3_72023
    \item $4^2 3^{27} 2^{12-i} 1^{3i}$, where $0\le i\le 12$; % reducible from 4^1 3^27 2^7 1^0 
    \item $4^2 3^{26} 2^{13-i} 1^{4+3i}$, where $0\le i\le 13$; % reducible from 4^2 3^27 2^12 1^0
    \item $4^2 3^{25} 2^{15-i} 1^{5+3i}$, where $0\le i\le 15$; % reducible from 4^3 3^25 2^10 1^5
    \item $4^2 3^{24} 2^{19-i} 1^{3i}$, where $0\le i\le 19$; % reducble from 4^3 3^24 2^14 1^0 
    \item $4^2 3^{23} 2^{20-i} 1^{4+3i}$, where $0\le i\le 20$; % reducible from 4^2 3^24 2^19 1^0
    \item $4^2 3^{22} 2^{22-i} 1^{5+3i}$, where $0\le i\le 22$; % reducible from 4^3 3^22 2^17 1^5
    \item $4^2 3^{21} 2^{26-i} 1^{3i}$, where $0\le i\le 26$; % reducible from 4^3 3^21 2^21 1^0
    \item $4^2 3^{20} 2^{27-i} 1^{4+3i}$, where $0\le i\le 27$; % reducible from 4^2 3^21 2^26 1^0 
    \item $4^2 3^{19} 2^{29-i} 1^{5+3i}$, where $0\le i\le 29$; % reducible from 4^3 3^19 2^24 1^5
    \item $4^2 3^{18} 2^{33-i} 1^{3i}$, where $0\le i\le 33$; % reducible from 4^3 3^18 2^28 1^0
    \item $4^2 3^{17} 2^{34-i} 1^{4+3i}$, where $0\le i\le 34$; % reducible from 4^2 3^18 2^33 1^0 
    \item $4^2 3^{16} 2^{36-i} 1^{5+3i}$, where $0\le i\le 36$; % reducible from 4^3 3^16 2^31 1^5 
    \item $4^2 3^{15} 2^{40-i} 1^{3i}$, where $0\le i\le 40$; % reducible from 4^3 3^15 2^35 1^0
    \item $4^2 3^{14} 2^{41-i} 1^{4+3i}$, where $0\le i\le 41$; % reducible from 4^3 3^14 2^36 1^4
    \item $4^2 3^{13} 2^{43-i} 1^{5+3i}$, where $0\le i\le 43$; % reducible from 4^3 3^13 2^38 1^5 
    \item $4^2 3^{12} 2^{47-i} 1^{3i}$, where $0\le i\le 47$; % reducible from 4^3 3^12 2^42 1^0 
    \item $4^2 3^{11} 2^{48-i} 1^{4+3i}$, where $0\le i\le 48$; % reducible from 4^3 3^11 2^43 1^4
    \item $4^2 3^{10} 2^{50-i} 1^{5+3i}$, where $0\le i\le 50$; % reducible from 4^3 3^10 2^45 1^5
    \item $4^2 3^9 2^{54-i} 1^{3i}$, where $0\le i\le 54$; % reducible from 4^3 3^9 2^49 1^0
    \item $4^2 3^8 2^{55-i} 1^{4+3i}$, where $0\le i\le 55$; % reducible from 4^3 3^8 2^50 1^4
    \item $4^2 3^7 2^{57-i} 1^{5+3i}$, where $0\le i\le 57$; % reducible from 4^3 3^7 2^53 1^5
    \item $4^2 3^6 2^{61-i} 1^{3i}$, where $0\le i\le 61$; % reducible from 4^3 3^6 2^56 1^0 
    \item $4^2 3^5 2^{62-i} 1^{4+3i}$, where $0\le i\le 62$; % reducible from 4^3 3^5 2^57 1^4
    \item $4^2 3^4 2^{64-i} 1^{5+3i}$, where $0\le i\le 64$; % reducible from 4^3 3^4 2^59 1^5
    \item $4^2 3^3 2^{68-i} 1^{3i}$, where $0\le i\le 68$; % reducible from 4^3 3^3 2^63 1^0
    \item $4^2 3^2 2^{69-i} 1^{4+3i}$, where $0\le i\le 69$; % reducible from 4^3 3^2 2^64 1^4 
    \item $4^2 3^1 2^{71-i} 1^{5+3i}$, where $0\le i\le 71$; % reducible from 4^3 3^1 2^66 1^5
    \item $4^2 3^0 2^{75-i} 1^{3i}$, where $0\le i\le 75$; % reducible from 4^3 3^0 2^70 1^0 
    \item $4^1 3^{32} 2^{4-i} 1^{4+3i}$, where $0\le i\le 4$; % x3_9522, x3_10529, x3_16945, x3_92571, x3_18230, x3_88165, x3_19459, x3_95414, x3_20335, x3_94239, x3_23884, x3_24173, x3_31556, x3_37542, x3_33566, x3_35745, x3_41831, x3_59909, x3_45131, x3_57582, x3_49835, x3_52050, x3_50488, x3_52596, x3_64207, x3_80060, x3_65569, x3_80784, x3_66584, x3_81622, x3_68216, x3_78938, x1_4, x1_15, x1_68, x1_79, x2_4476, x2_6883, x2_7638, x2_10783, x4_706
    \item $4^1 3^{31} 2^{6-i} 1^{5+3i}$, where $0\le i\le 6$; % x3_13271, x3_93188, x3_15635, x3_89588, x3_17672, x3_91911, x3_18715, x3_88529, x3_28532, x3_28822, x3_33556, x3_35732, x3_39824, x3_62193, x3_42356, x3_60472, x3_43890, x3_58127, x3_49220, x3_53478, x3_63934, x3_79789, x3_66532, x3_81604, x3_69552, x3_84139, x3_71774, x3_77581, x1_4, x1_23, x1_27, x1_81, x1_82, x2_277, x2_1890, x2_2129, x2_2312, x2_2351, x2_2476, x3_5067, x3_23013, x3_23192, x4_179676
    \item $4^1 3^{30} 2^{10-i} 1^{3i}$, where $0\le i\le 10$; % reducible from 4^2 3^30 2^5 1^0
    \item $4^1 3^{29} 2^{11-i} 1^{4+3i}$, where $0\le i\le 11$; % reducible from 4^1 3^30 2^10 1^0
    \item $4^1 3^{28} 2^{13-i} 1^{5+3i}$, where $0\le i\le 13$; % reducible from 4^2 3^28 2^8 1^5
    \item $4^1 3^{27} 2^{17-i} 1^{3i}$, where $0\le i\le 17$; % reducible from 4^2 3^27 2^12 1^0
    \item $4^1 3^{26} 2^{18-i} 1^{4+3i}$, where $0\le i\le 18$; % reducible from 4^2 3^26 2^13 1^4
    \item $4^1 3^{25} 2^{20-i} 1^{5+3i}$, where $0\le i\le 20$; % reducible from 4^2 3^25 2^15 1^5
    \item $4^1 3^{24} 2^{24-i} 1^{3i}$, where $0\le i\le 24$; % reducible from 4^2 3^24 2^19 1^0
    \item $4^1 3^{23} 2^{25-i} 1^{4+3i}$, where $0\le i\le 25$; % reducible from 4^2 3^23 2^20 1^4
    \item $4^1 3^{22} 2^{27-i} 1^{5+3i}$, where $0\le i\le 27$; % reducible from 4^2 3^22 2^22 1^5
    \item $4^1 3^{21} 2^{31-i} 1^{3i}$, where $0\le i\le 31$; % reducible from 4^2 3^21 2^26 1^0
    \item $4^1 3^{20} 2^{32-i} 1^{4+3i}$, where $0\le i\le 32$; % reducible from 4^2 3^20 2^27 1^4
    \item $4^1 3^{19} 2^{34-i} 1^{5+3i}$, where $0\le i\le 34$; % reducible from 4^2 3^19 2^29 1^5
    \item $4^1 3^{18} 2^{38-i} 1^{3i}$, where $0\le i\le 38$; % reducible from 4^2 3^18 2^33 1^0
    \item $4^1 3^{17} 2^{39-i} 1^{4+3i}$, where $0\le i\le 39$; % reducible from 4^2 3^17 2^34 1^4
    \item $4^1 3^{16} 2^{41-i} 1^{5+3i}$, where $0\le i\le 41$; % reducible from 4^2 3^16 2^36 1^5   
    \item $4^1 3^{15} 2^{45-i} 1^{3i}$, where $0\le i\le 45$; % reducible from 4^2 3^15 2^40 1^0 
    \item $4^1 3^{14} 2^{46-i} 1^{4+3i}$, where $0\le i\le 46$; % reducible from 4^2 3^14 2^41 1^4
    \item $4^1 3^{13} 2^{48-i} 1^{5+3i}$, where $0\le i\le 48$; % reducible from 4^2 3^13 2^43 1^5
    \item $4^1 3^{12} 2^{52-i} 1^{3i}$, where $0\le i\le 52$; % reducible from 4^2 3^12 2^47 1^0
    \item $4^1 3^{11} 2^{53-i} 1^{4+3i}$, where $0\le i\le 53$; % reducible from 4^2 3^11 2^48 1^4
    \item $4^1 3^{10} 2^{55-i} 1^{5+3i}$, where $0\le i\le 55$; % reducible from 4^2 3^10 2^50 1^5
    \item $4^1 3^9 2^{59-i} 1^{3i}$, where $0\le i\le 59$; % reducible from 4^2 3^9 2^54 1^0
    \item $4^1 3^8 2^{60-i} 1^{4+3i}$, where $0\le i\le 60$; % reducible from 4^2 3^8 2^55 1^4
    \item $4^1 3^7 2^{62-i} 1^{5+3i}$, where $0\le i\le 62$; % reducible from 4^2 3^7 2^57 1^5
    \item $4^1 3^6 2^{66-i} 1^{3i}$, where $0\le i\le 66$; % reducible from 4^2 3^6 2^61 1^0
    \item $4^1 3^5 2^{67-i} 1^{4+3i}$, where $0\le i\le 67$; % reducible from 4^2 3^5 2^62 1^4
    \item $4^1 3^4 2^{69-i} 1^{5+3i}$, where $0\le i\le 69$; % reducible from 4^2 3^4 2^64 1^5
    \item $4^1 3^3 2^{73-i} 1^{3i}$, where $0\le i\le 73$; % reducible from 4^2 3^3 2^68 1^0
    \item $4^1 3^2 2^{74-i} 1^{4+3i}$, where $0\le i\le 74$; % reducible from 4^2 3^2 2^69 1^4
    \item $4^1 3^1 2^{76-i} 1^{5+3i}$, where $0\le i\le 76$; % reducible from 4^2 3^1 2^71 1^5
    \item $4^1 3^0 2^{80-i} 1^{3i}$, where $0\le i\le 80$; % reducible from 4^2 3^0 2^75 1^0
    \item $4^0 3^{34} 2^{3-i} 1^{8+3i}$, where $0\le i\le 3$; % maximum partial spread with a hole configuration of dimension six; only three disjoint lines possible
    \item $4^0 3^{33} 2^{8-i} 1^{3i}$, where $0\le i\le 8$; % reducible from 5^1 3^32
    \item $4^0 3^{32} 2^{9-i} 1^{4+3i}$, where $0\le i\le 9$; % reducible from 4^0 3^33 2^8 1^0
    \item $4^0 3^{31} 2^{11-i} 1^{5+3i}$, where $0\le i\le 11$; % reducible from 4^1 3^31 2^6 1^5
    \item $4^0 3^{30} 2^{15-i} 1^{3i}$, where $0\le i\le 15$; % reducible from 4^1 3^30 2^10 1^0
    \item $4^0 3^{29} 2^{16-i} 1^{4+3i}$, where $0\le i\le 16$; % reducible from 4^1 3^29 2^11 1^4
    \item $4^0 3^{28} 2^{18-i} 1^{5+3i}$, where $0\le i\le 18$; % reducible from 4^1 3^28 2^13 1^5
    \item $4^0 3^{27} 2^{22-i} 1^{3i}$, where $0\le i\le 22$; % reducible from 4^1 3^27 2^17 1^0
    \item $4^0 3^{26} 2^{23-i} 1^{4+3i}$, where $0\le i\le 23$; % reducible from 4^1 3^26 2^18 1^4 
    \item $4^0 3^{25} 2^{25-i} 1^{5+3i}$, where $0\le i\le 25$; % reducible from 4^1 3^25 2^20 1^5
    \item $4^0 3^{24} 2^{29-i} 1^{3i}$, where $0\le i\le 29$; % reducible from 4^1 3^24 2^24 1^0
    \item $4^0 3^{23} 2^{30-i} 1^{4+3i}$, where $0\le i\le 30$; % reducible from 4^1 3^23 2^25 1^4
    \item $4^0 3^{22} 2^{32-i} 1^{5+3i}$, where $0\le i\le 32$; % reducible from 4^1 3^22 2^27 1^5
    \item $4^0 3^{21} 2^{36-i} 1^{3i}$, where $0\le i\le 36$; % reducible from 4^1 3^21 2^31 1^0 
    \item $4^0 3^{20} 2^{37-i} 1^{4+3i}$, where $0\le i\le 37$; % reducible from 4^1 3^20 2^32 1^4
    \item $4^0 3^{19} 2^{39-i} 1^{5+3i}$, where $0\le i\le 39$; % reducible from 4^2 3^19 2^29 1^5
    \item $4^0 3^{18} 2^{43-i} 1^{3i}$, where $0\le i\le 43$; % reducible from 4^1 3^18 2^38 1^0
    \item $4^0 3^{17} 2^{44-i} 1^{4+3i}$, where $0\le i\le 44$; % reducible from 4^1 3^17 2^39 1^4
    \item $4^0 3^{16} 2^{46-i} 1^{5+3i}$, where $0\le i\le 46$; % reducible from 4^1 3^16 2^41 1^5
    \item $4^0 3^{15} 2^{50-i} 1^{3i}$, where $0\le i\le 50$; % reducible from 4^1 3^15 2^45 1^0
    \item $4^0 3^{14} 2^{51-i} 1^{4+3i}$, where $0\le i\le 51$; % reducible from 4^1 3^14 2^46 1^4
    \item $4^0 3^{13} 2^{53-i} 1^{5+3i}$, where $0\le i\le 53$; % reducible from 4^1 3^13 2^48 1^5
    \item $4^0 3^{12} 2^{57-i} 1^{3i}$, where $0\le i\le 57$; % reducible from 4^1 3^12 2^52 1^0
    \item $4^0 3^{11} 2^{58-i} 1^{4+3i}$, where $0\le i\le 58$; % reducible from 4^1 3^11 2^53 1^4
    \item $4^0 3^{10} 2^{60-i} 1^{5+3i}$, where $0\le i\le 60$; % reducible from 4^1 3^10 2^55 1^5
    \item $4^0 3^9 2^{64-i} 1^{3i}$, where $0\le i\le 64$; % reducible from 4^1 3^9 2^59 1^0
    \item $4^0 3^8 2^{65-i} 1^{4+3i}$, where $0\le i\le 65$; % reducible from 4^1 3^8 2^60 1^4
    \item $4^0 3^7 2^{67-i} 1^{5+3i}$, where $0\le i\le 67$; % reducible from 4^1 3^7 2^62 1^5
    \item $4^0 3^6 2^{71-i} 1^{3i}$, where $0\le i\le 71$; % reducible from 4^1 3^6 2^66 1^0
    \item $4^0 3^5 2^{72-i} 1^{4+3i}$, where $0\le i\le 72$; % reducible from 4^1 3^5 2^67 1^4
    \item $4^0 3^4 2^{74-i} 1^{5+3i}$, where $0\le i\le 74$; % reducible from 4^1 3^4 2^69 1^5
    \item $4^0 3^3 2^{78-i} 1^{3i}$, where $0\le i\le 78$; % reducible from 4^1 3^3 2^73 1^0
    \item $4^0 3^2 2^{79-i} 1^{4+3i}$, where $0\le i\le 79$; % reducible from 4^1 3^2 2^74 1^4
    \item $4^0 3^1 2^{81-i} 1^{5+3i}$, where $0\le i\le 81$; % reducible from 4^1 3^1 2^76 1^5
    \item $4^0 3^0 2^{85-i} 1^{3i}$, where $0\le i\le 85$; % reducible from 4^1 3^0 2^80 1^0
  \end{itemize}
\end{proposition}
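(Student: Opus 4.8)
The plan is to prove Proposition~\ref{prop_vsp_pg_7_2_ext} as a two-sided classification: show that every listed type occurs as (the type of) a vector space partition of $\PG(7,2)$, and that no type meeting the trivial necessary conditions but absent from the list occurs. Observe first that, up to unrolling the auxiliary parameter $j$ in the families of Theorem~\ref{main_thm}, the itemization here is exactly the one in Theorem~\ref{main_thm}; hence it suffices to carry out the classification argument underlying that theorem and then perform the purely mechanical check that the two itemizations coincide. I would therefore split the work into a \emph{necessity} part (pruning a finite candidate list) and a \emph{sufficiency} part (constructions), exactly as sketched in Section~\ref{sec_vsp_pg_7_2}.

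For necessity I would start by enumerating all types $7^{m_7}\dots 2^{m_2}1^{m_1}$ of $\PG(7,2)$ satisfying the packing condition~(\ref{eq_packing_condition}) together with the dimension condition~(\ref{eq_dimension_condition}); this yields a finite, explicitly computable candidate list. I would then delete every candidate that contains one of the forbidden supertails collected in Proposition~\ref{prop_forbidden_supertails}: these exclusions rest on the divisibility of the point set covered by the low-dimensional elements (the corollary following Lemma~11 of~\cite{kiermaier2020lengths}), on the length spectrum of projective $\Delta$-divisible binary codes in Proposition~\ref{prop_lenghts_div_proj}, and on the structural uniqueness results of Lemma~\ref{lemma_q_2_subspaces} and the subsequent lemmas on affine subspaces and on disjoint solids (Lemma~\ref{lemma_three_solids}). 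Finally I would apply the deeper non-existence results---Lemma~\ref{lemma_four_solids_parameters}, Lemma~\ref{lemma_five_solids_parameters} (via Lemma~\ref{lemma_five_solids_parameters_aux}), Lemma~\ref{lemma_six_solids_parameters_1} (via Lemma~\ref{lemma_4_div_card_20_five_lines}), and Lemma~\ref{lemma_six_solids_parameters_2}---which remove precisely the types $4^{13}3^62^{6-i}1^{3i}$, $4^{13}3^52^{7-i}1^{4+3i}$, $4^{13}3^42^91^5$, $4^{12}3^82^{5-i}1^{4+3i}$ for $2\le i\le 5$, $4^{11}3^{10}2^51^5$, and $4^43^{25}2^51^5$. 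Each of these arguments classifies the relevant $8$-divisible (or, in the last two cases, $4$-divisible) point set---by the standard equations plus a convexity inequality of the form $\sum (i-a)(i-a{-}1)a_{\cdot}\ge 0$ forcing the spectrum, followed by a complete computer enumeration of the resulting projective two-weight codes with \texttt{LinCode}~\cite{bouyukliev2021computer}---and then checks, via an ILP feasibility test, that the complement admits no partition into the prescribed solids (and planes and lines). The surviving candidate list is then to be matched term by term against the itemization in the proposition.

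For sufficiency I would first invoke the reduction rules of Section~\ref{sec_vsp_pg_7_2} (replacing a line by its three points, a plane by a line and four points or by seven points, a solid by the analogous pieces) to reduce the list to a small set of ``irreducible'' representatives: each infinite family $\dots 2^{m_2-i}1^{m_1+3i}$ follows from its member with smallest $i$, and similarly one peels off planes and solids. For the finitely many representatives that remain I would exhibit explicit partitions: Desarguesian spreads of solids and lines in $\PG(7,2)$ (e.g.\ $4^{17}$ and $2^{85}$), lifted MRD codes (e.g.\ $5^13^{32}$ and $4^{17}$), and otherwise an ILP search with symmetry reduction---prescribing up to three pairwise disjoint subspaces, whose orbit under the collineation group is determined by the dimension of their span, restricting the solids to a fixed Desarguesian spread when $m_4$ is large, and invoking the Kramer--Mesner method with a prescribed automorphism subgroup of order at most $16$ when the plain ILP is intractable.

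The main obstacle is essentially computational and appears on both sides. On the non-existence side, the arguments of Lemmas~\ref{lemma_four_solids_parameters}--\ref{lemma_six_solids_parameters_2} depend on exhaustive enumerations of projective divisible binary codes of lengths up to $75$ (one of which took roughly $85$ hours) together with many ILP feasibility checks for completing a given point set to a solid partition; reproducing or independently certifying these computations---or, better, replacing some of them by pen-and-paper arguments, as the paper itself asks---is the crux. On the existence side, even after the reduction to irreducible representatives one still needs a construction for each of the more than ten thousand feasible types, and for some of them only the heuristic Kramer--Mesner search terminates, so ``all listed types occur'' is a genuine search rather than a closed-form statement. The final bookkeeping identification of the pruned candidate list with the explicit itemization of the proposition, though routine, is error-prone and should itself be machine-verified.
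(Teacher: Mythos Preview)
Your proposal is correct and mirrors the paper's own approach exactly: the paper gives no separate proof of Proposition~\ref{prop_vsp_pg_7_2_ext} beyond the remark that it is the explicit unrolling of Theorem~\ref{main_thm}, whose proof is precisely the combination of the supertail exclusions (summarized in Proposition~\ref{prop_forbidden_supertails}), the deeper non-existence Lemmas~\ref{lemma_four_solids_parameters}--\ref{lemma_six_solids_parameters_2}, and the ILP/spread/MRD/Kramer--Mesner constructions described at the start of Section~\ref{sec_vsp_pg_7_2}. Your identification of the computational bottlenecks on both sides is also accurate and matches the paper's own caveats.
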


\end{document}